\numberwithin{equation}{section}
\newcommand{\abs}[1]{\lvert#1\rvert}
\newcommand{\norm}[1]{\lVert#1\rVert}
\newcommand{\paren}[1]{\left(#1\right)}
\newcommand{\bracket}[1]{\left[#1\right]}
\newcommand{\set}[1]{\left\{#1\right\}}
\newcommand{\matlab}{\textsc{Matlab} }
\newcommand{\ind}{{\mathrm{ind}}}
\newcommand{\rad}{{\mathrm{rad}}}
\newcommand{\spn}{{\mathrm{span}}}
\newcommand{\rmax}{r_{\max}}
\newcommand{\xmax}{x_{\max}}
\newcommand{\grad}{\nabla}
\newcommand{\ess}{\mathrm{ess}}
\newcommand{\bx}{\mathbf{x}}
\newcommand{\inner}[2]{\left\langle #1,#2\right\rangle}
\newcommand{\R}{\mathbb{R}}
\newcommand{\calU}{\mathcal{U}}
\newcommand{\calL}{{\mathcal{L}}}
\newcommand{\calB}{{\mathcal{B}}}
\newcommand{\calV}{\mathcal{V}}
\newtheorem{thm}{Theorem}
\newtheorem{prop}{Proposition}[section]
\newtheorem{cor}[prop]{Corollary}
\newtheorem{defn}[prop]{Definition}
\begin{document}

\title[Eigenvalues and NLS]{Embedded Eigenvalues and the Nonlinear
  Schr\"odinger Equation}

\author[Asad]{Reza Asad}\email{mehrdad.asad@utoronto.ca}
\address{Department of Mathematics\\ University of Toronto\\Toronto,
  Canada}

\author[Simpson]{Gideon Simpson}\email{simpson@math.toronto.edu}
\address{Department of Mathematics\\ University of Toronto\\Toronto,
  Canada}

\date{\today}

\maketitle

\begin{abstract}
  A common challenge to proving asymptotic stability of solitary waves
  is understanding the spectrum of the operator associated with the
  linearized flow.  The existence of eigenvalues can inhibit the
  dispersive estimates key to proving stability.  Following the work
  of Marzuola \& Simpson, we prove the absence of embedded eigenvalues
  for a collection of nonlinear Schr\"odinger equations, including
  some one and three dimensional supercritical equations, and the
  three dimensional cubic-quintic equation.  Our results also rule out
  nonzero eigenvalues within the spectral gap and, in 3D, endpoint
  resonances.

  The proof is computer assisted as it depends on the sign of certain
  inner products which do not readily admit analytic representations.
  Our source code is available for verification at
  \url{http://www.math.toronto.edu/simpson/files/spec_prop_asad_simpson_code.zip}.
\end{abstract}

\section{Introduction}
\label{s:intro}
The nonlinear Schr\"odinger equation (NLS) in $\R^{d+1}$ dimensions,
\begin{equation}
  \label{e:nls}
  i \psi_t + \nabla^2 \psi + f (|\psi|^2) \psi =  0, \quad \psi (0,\bx) = \psi_0 (\bx), 
\end{equation}
appears as a leading order approximation to a wide variety of natural
and engineered systems, including optics, plasma physics, and fluid
mechanics.  It is also a model equation for the studying the
competition between nonlinear and dispersive effects.  See Sulem \&
Sulem, \cite{sulem1999nse}, for details and examples.

\subsection{Solitons}

For appropriate choices of the nonlinearity, $f: \R \to \R$, the
equation will possess solitary wave solutions, smooth localized
solutions of \eqref{e:nls} satisfying the ansatz
\begin{equation}
  \label{e:soliton_ansatz}
  \psi(t,\bx) = e^{i \omega t} R(\bx; \omega), \quad \omega > 0.
\end{equation}
$R$ then solves the nonlinear elliptic equation
\begin{equation}
  \label{e:soliton}
  -\omega R + \nabla^2 R + f(\abs{R}^2)R = 0.
\end{equation}
The solutions are assumed to have finite $L^2$ norm and vanish at
infinity.

These nonlinear bound states, deemed solitary waves or solitons, are
of interest both as mathematical objects and in applications.  This
raises the question of whether or not these solutions are stable to
the flow of \eqref{e:nls}.  Developing tools for assessing stability
is the main purpose of this paper.

\subsection{Stability}
With regard to solitary waves, the two types of stability typically
discussed are {\it orbital} stability and {\it asymptotic} stability.
Orbital stability, alternatively called Lyapunov or modulational
stability, asserts that if the data is close to a solitary wave, it
remains close, modulo a group of symmetries associated with the
equation.

In \cite{Weinstein:1985p66,Weinstein:1986p65}, M.I. Weinstein proved
that orbital stability of a solution to \eqref{e:soliton} could be
assessed by computing the sign of
\begin{equation}
  \label{e:slope_condition}
  \frac{d}{d\omega}\int \abs{R({\bf x} ;\omega)}^2 d {\bf x}.
\end{equation}
When it is positive, the solitons are stable; when it is negative,
they are unstable.  This work was subsequently generalized for
Hamiltonian equations by Grillakis, Shatah, \& Strauss
\cite{Grillakis:1987p115,Grillakis:1990p116}.

For NLS with a power, or monomial, nonlinearity of the form $f(s) =
s^\sigma$, the transition between stable and unstable solitons is
determined by the product $\sigma d$.  If $\sigma d < 2$, the solitary
waves are stable, otherwise they are unstable.  These regimes, $\sigma
d < 2, = 2, >2$ correspond to the subcritical, critical, and
supercritical forms of \eqref{e:nls}, and are intimately related to
the well-posedness of NLS, \cite{sulem1999nse}.

Of course, orbital stability does not tell us of the limiting behavior
of the solution.  An orbitally stable soliton could perpetually
oscillate through the symmetry group of the equation.  It is our
expectation that perturbations of stable solitons diminish as $t\to
\infty$, and the solution relaxes to a particular solitary wave with a
fixed set of parameters.  We turn to asymptotic stability to
understand the dynamics as $t\to \infty$.

Asymptotic stability is usually proven by expanding \eqref{e:nls}
about the solitary wave solution, $\psi = e^{i \omega t} (R(\cdot;
\omega) + u + i v)$ to arrive at an equation for the perturbation, $p
= u+iv$,
\begin{equation}
  \label{e:linearize_nls}
  \begin{split}
    \partial_t \begin{pmatrix} u \\ v \end{pmatrix} &=
    JL \begin{pmatrix}
      u\\ v \end{pmatrix}  + {\bf F}(u,v)\\
    &= \begin{pmatrix} 0 & 1 \\ -1 & 0 \end{pmatrix}\begin{pmatrix}
      L_+ & 0 \\ 0 & L_-\end{pmatrix}\begin{pmatrix} u \\
      v \end{pmatrix} + {\bf F}.
  \end{split}
\end{equation}
${\bf F}$ contains terms that are nonlinear in the perturbation, and
the scalar operators $L_\pm$ are given by
\begin{subequations}
  \label{e:linear_ops}
  \begin{align}
    L_+ &= - \nabla^2 + \omega -f(R^2) - 2 f'(R^2)R^2= -\nabla^2 +
    \omega + V_+,\\
    L_- & = - \nabla^2 + \omega- f(R^2)= - \nabla^2 +\omega + V_-.
  \end{align}
\end{subequations}
Proving asymptotic stability of the solitary wave will require
Strichartz estimates of the form
\begin{equation}
  \label{e:strichartz}
  \norm{e^{JLt} f}_{L^p_t L^q_x} \lesssim \norm{f}_{L^r_x}.
\end{equation}
With such an estimate, one can conclude linear stability of the
soliton from the temporal decay of a solution to the linear problem
\[ {\bf p}_t = JL {\bf p}, \quad {\bf p} = (u,v)^T.
\]
This decay can then be used to show that the nonlinear part of the
flow, ${\bf F}$ in \eqref{e:linearize_nls}, is dominated by the linear
part can can be treated perturbatively.  Successful implementations
include Buslaev \& Perelman, \cite{BusPer}, Buslaev \& Sulem,
\cite{Buslaev:2003p5760}, Cuccagna and Rodnianski, Soffer \& Schlag,
\cite{Cuc, RodSchSof} and more recently to Schlag and Krieger \&
Schlag, \cite{KS1,Schlag:2009p346}.  These last two works, where the
authors show the asymptotic stability of a constrained soliton which
is orbitally {\it unstable}, are closely related to the present
results.


\subsection{The Spectrum \& Embedded Eigenvalues}

Estimates of the form \eqref{e:strichartz} require adequate knowledge
of the spectrum of $JL$, $\sigma(JL)$.  Since the solitons are highly
localized, $JL$ is easily shown to be a {\it relatively compact
  perturbation} of
\begin{equation}
  \begin{pmatrix} 0 & - \nabla^2 + \omega\\ \nabla^2 - \omega & 0\end{pmatrix}
\end{equation}
which has as its essential spectrum
\[
(-i\infty,-i\omega] \cup [i\omega, i\infty)
\]
This can easily be computed by the Fourier transform.  Thus
\begin{equation}
  \label{e:spectral_estimate}
  \sigma_\ess(JL) = i(-\infty, -\omega]\cup i[\omega, \infty) \subseteq \sigma(JL) 
\end{equation}
See \cite{Erdogan:2006p347} for additional details.  By direct
computations, one can show that the origin is an eigenvalue of $JL$ of
algebraic multiplicity {\it at least} $2d+2$.  See section
\ref{s:discrete} below for the elements of the kernel, and Figure
\ref{f:spec} for a visualization of the spectrum of the problems
considered here.

\begin{figure}
  \includegraphics[width=2.4in]{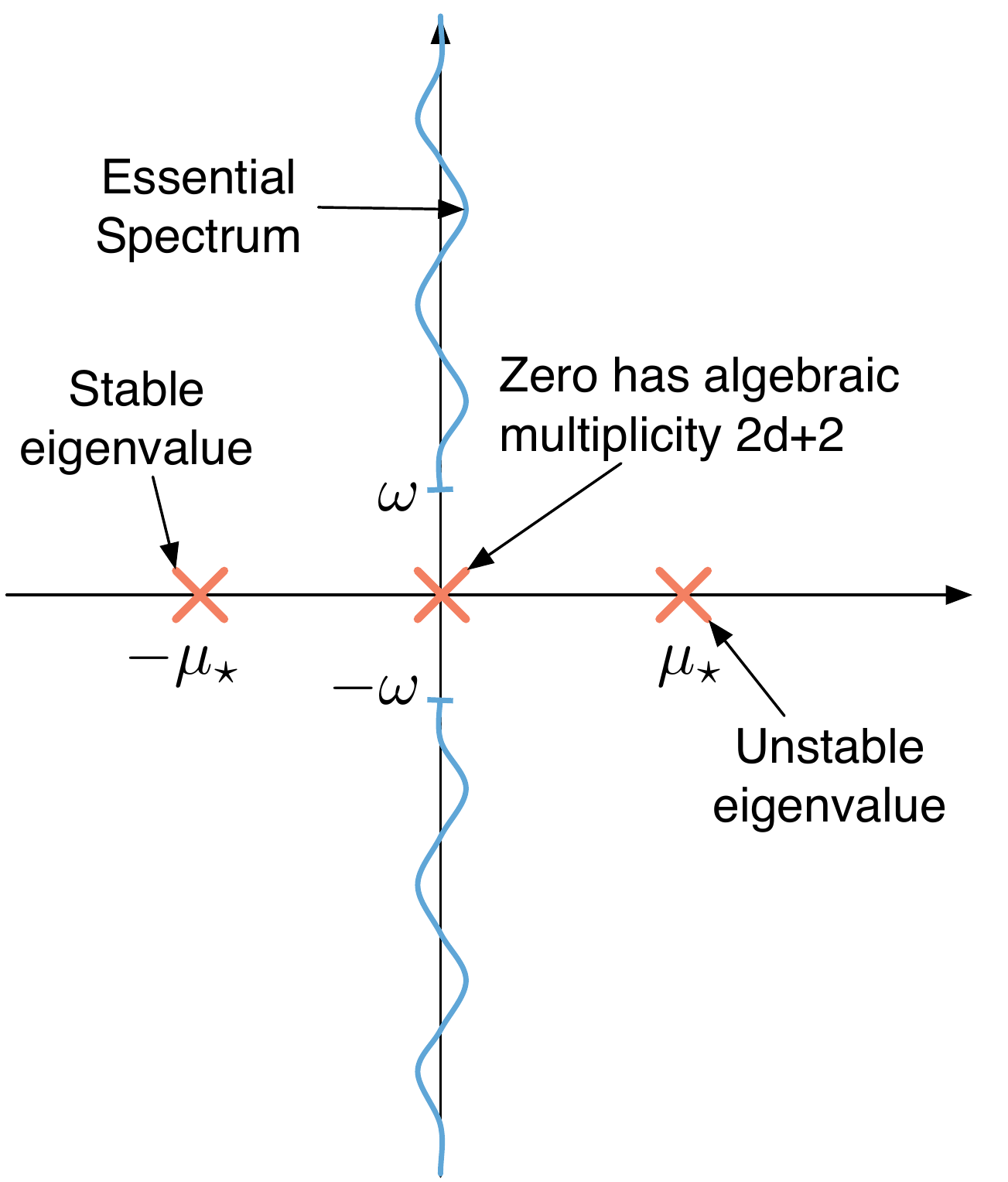}
  \caption{The generic spectrum of $JL$ for an orbitally unstable
    soliton.  The instability is due to the positive eigenvalue,
    $\mu_\star$. }
  \label{f:spec}
\end{figure}

The existence of purely imaginary eigenvalues both within the {\it
  spectral gap}, $[-i\omega, i\omega]$, and embedded in the essential
spectrum and resonances can obstruct the Strichartz estimates.  Many
results on asymptotic stability of NLS solitons, including
\cite{Buslaev:2003p5760,Erdogan:2006p347,Schlag:2009p346}, have
explicitly assumed:
\begin{enumerate}
\item $JL$ has no embedded eigenvalues;
\item The only eigenvalue within the spectral gap is zero;
\item The endpoints of the essential spectrum, $\pm i \omega$, are not
  resonances.
\end{enumerate}
We are thus motivated to find ways of rigorously proving these
assumptions.

Some previous results on these spectral questions were addressed by
Krieger \& Schlag in 1D for $f(s)= s^\sigma$ with $\sigma > 4$
(supercritical), \cite{KS1}.  Alternatively, Cuccagna \& Pelinovksy
and Cuccagna, Pelinovsky, \& Vougalter \cite{CucPel, CucPelVou} showed
that in 1D, embedded eigenvalues with positive Krein signature can
exist and not disrupt asymptotic stability.  But an extension of their
approach, based on the Fermi Golden Rule, to higher dimensions remains
a challenge; it may be easier to prove that such states are absent.
In \cite{DeSc}, Demanet \& Schlag gave a numerically assisted proof
for the absence of non zero eigenvalues within the spectral gap for 3D
NLS with a power nonlinearity with
\begin{equation}
  \label{e:desc_bound}
  0.913958905\pm 10^{−8}<\sigma \leq 1
\end{equation}

More recently, Marzuola \& Simpson, \cite{Marzuola:2010p5770}, proved
that the 3D cubic problem has no purely imaginary eigenvalues or
endpoint resonances.  This proof required a modest amount of numerical
assistance to compute:
\begin{itemize}
\item The dimension of the negative subspace of several 1D
  Schr\"odinger operators;
\item The inner products of solutions of several 1D boundary value
  problems.
\end{itemize}
In this work, we apply the approach of \cite{Marzuola:2010p5770}, to
some other equations.

\subsection{Main Results}
Our main results are for the 1D and 3D supercritical NLS equations
with $f(s) = s^\sigma$, and the 3D cubic-quintic nonlinear
Schr\"odinger equation (CQNLS) and for the 1D supercritical NLS,
\begin{equation}
  \label{e:cqnls}
  i \psi_t + \nabla^2\psi + (\abs{\psi}^2 - \gamma \abs{\psi}^4)\psi = 0.
\end{equation}

\begin{thm}
  \label{t:3dnls}
  For 3D NLS with
  \begin{equation}
    \label{e:3dnls_bound}
    0.807425  <\sigma  <1.12092
  \end{equation}
  $JL$, the linearization about the soliton $R= R(\cdot;1)$, has no
  purely imaginary eigenvalues or endpoint resonances.
\end{thm}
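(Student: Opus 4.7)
The plan is to follow the computer-assisted strategy of \cite{Marzuola:2010p5770}, re-executed for the $\sigma$-dependent potentials. First, I would use the $SO(3)$ symmetry of $JL$ to decompose $L^2(\R^3,\C^2)$ into spherical harmonic sectors indexed by $\ell = 0,1,2,\ldots$. In each sector, the eigenvalue problem reduces to a coupled radial system on $(0,\infty)$ for the scalar operators
\begin{equation*}
  L^{(\ell)}_\pm = -\partial_r^2 - \tfrac{2}{r}\partial_r + \tfrac{\ell(\ell+1)}{r^2} + \omega + V_\pm(r),
\end{equation*}
with $V_\pm$ built from the radial soliton $R=R(\cdot;1)$. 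Since $V_\pm$ decay exponentially while the centrifugal barrier grows in $\ell$, a Bargmann-type bound should restrict the eigenvalue analysis to finitely many $\ell$; the remaining $\ell$ can be dismissed by an a priori estimate on the number of bound states of $L_\pm^{(\ell)}$.

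\textbf{Non-zero gap eigenvalues.} For a purely imaginary eigenvalue $\lambda = i\mu$ with $0 < |\mu| < \omega$, one has $L_+ u = \mu v$ and $L_- v = -\mu u$, so $L_- L_+ u = -\mu^2 u$. Since $R>0$ is the ground state of $L_-$, the operator $L_-$ is nonnegative with one-dimensional kernel, and the problem can be recast self-adjointly on $\{R\}^{\perp}$ by conjugation with $L_-^{1/2}$. In each $\ell$-sector, the existence of a solution with $\mu^2>0$ is then equivalent to the negative-subspace dimension of a concrete Schr\"odinger-type operator exceeding what is accounted for by the $(2d{+}2)$-dimensional modulation kernel of $JL$ and the known supercritical real pair $\pm\mu_\star$. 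Counting these dimensions numerically, as in \cite{Marzuola:2010p5770}, and verifying that they match the known modes exactly rules out additional gap eigenvalues.

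\textbf{Embedded eigenvalues and endpoint resonances.} For $|\mu|\ge\omega$, exponential dichotomy at $r=\infty$ and the regularity requirement at $r=0$ each select a two-dimensional solution manifold of the radial system in every $\ell$-sector; an embedded eigenfunction (or, at $|\mu|=\omega$, a resonance) is precisely a nontrivial intersection, measurable by the vanishing of a Wronskian-type determinant $W^{(\ell)}(\sigma,\mu)$. Following the key manipulation of \cite{Marzuola:2010p5770}, this determinant can be rewritten as an integral inner product of explicit radial functions generated by Jost-type boundary-value problems. The sought conclusion is then equivalent to a sign condition on these inner products, uniform in $\mu\in[\omega,\infty)$ and in the parameter $\sigma$; the endpoint case $|\mu|=\omega$ is handled in parallel after replacing the oscillatory exterior solutions by their algebraically decaying threshold analogues.

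\textbf{Main obstacle.} The hard part is the rigorous numerics. The profile $R(\cdot;\sigma)$ is not available in closed form for general $\sigma$, so it must be produced by a shooting/collocation solve of \eqref{e:soliton} with controlled error. These errors then propagate into the boundary-value solutions whose inner products determine the signs of the Wronskians $W^{(\ell)}(\sigma,\mu)$, and these signs must be certified \emph{uniformly} across the whole parameter window and across $\mu\ge\omega$. I expect the specific numerical endpoints in \eqref{e:3dnls_bound} to mark precisely the $\sigma$ values at which one of the computed inner products crosses zero or ceases to be numerically distinguishable from zero; broadening the window would require either sharper computation or additional analytic structure in the $\sigma$-dependence that is not evident from the setup.
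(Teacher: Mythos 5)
Your proposal does not follow the paper's method, and as written it has two genuine gaps. The paper's entire proof rests on the \emph{Spectral Property} of Definition \ref{d:spec_prop}: one forms the virial-type operators $\calL_\pm=\tfrac12[L_\pm,\Lambda]$ with $\Lambda=\tfrac{d}{2}+{\bf x}\cdot\nabla$, shows (following Perelman and Marzuola--Simpson) that the bilinear form $\calB({\bf z},{\bf z})=\inner{\calL_+f}{f}+\inner{\calL_-g}{g}$ vanishes on any eigenstate with purely imaginary eigenvalue and on any endpoint resonance, and then proves $\calB$ is coercive on the orthogonal complement $\calU$ of the six explicit directions in \eqref{e:U_perp}. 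That single, $\mu$-independent coercivity statement disposes of gap eigenvalues, embedded eigenvalues, and resonances all at once; the numerics only enter through the indexes of $\calB_\pm^{(k)}$ on each harmonic and the signs of finitely many inner products $K_j^{(k)},J_j^{(k)}$ as functions of $\sigma$, whose zero crossings produce exactly the endpoints in \eqref{e:3dnls_bound}. None of this appears in your proposal, and the "key manipulation of Marzuola--Simpson" is this commutator/coercivity argument, not a Wronskian or Jost-function identity.

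Concretely: (i) Your gap-eigenvalue step fails. With the paper's conventions $JL(u,v)^T=\lambda(u,v)^T$ gives $L_-L_+u=-\lambda^2u$, so a purely imaginary $\lambda=i\mu$ corresponds to the eigenvalue $+\mu^2\in(0,\omega^2)$ of $L_-^{1/2}L_+L_-^{1/2}$, sitting \emph{below the essential spectrum threshold but among the positive spectrum}; it is the real pair $\pm\mu_\star$ that produces a negative eigenvalue. Counting negative directions therefore only recovers $\mu_\star$ and says nothing about gap eigenvalues; more generally, Hamiltonian--Krein index counting cannot exclude imaginary eigenvalues of positive Krein signature, which is precisely why Demanet--Schlag and the present paper need a finer argument. (ii) Your embedded/resonance step requires certifying the nonvanishing of $W^{(\ell)}(\sigma,\mu)$ uniformly over the continuum $\mu\in[\omega,\infty)$ and over $\sigma$, a two-parameter verification you acknowledge but do not reduce to finitely many checkable conditions; the paper's coercivity of $\calB$ on $\calU$ is exactly the device that collapses this continuum of conditions into finitely many sign computations. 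To repair the proposal you would need to introduce the operators $\calL_\pm$, the subspace $\calU$, the index propositions (Propositions \ref{p:1d_index}--\ref{p:idx_stability}), and the inner-product computations of Propositions \ref{p:nls3d_k0}--\ref{p:nls3d_k1}.
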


\begin{thm}
  \label{t:3dcqnls}
  For 3D CQNLS with
  \begin{equation}
    \label{e:cqnls:bound}
    \gamma < 0.00989115
  \end{equation}
  $JL$, the linearization about the soliton $R= R(\cdot;1)$, has no
  purely imaginary eigenvalues or endpoint resonances.
\end{thm}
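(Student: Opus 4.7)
The plan is to adapt the Marzuola--Simpson strategy \cite{Marzuola:2010p5770} to the cubic-quintic nonlinearity $f(s) = s - \gamma s^2$. First I would numerically construct, to controlled accuracy, the radial ground state $R(\cdot;1)$ of \eqref{e:soliton} for each $\gamma$ in the stated range by a shooting method; existence and uniqueness of such a ground state for small $\gamma$ follow by continuation from the known $\gamma = 0$ cubic profile. This determines the radial potentials $V_-(r) = -R^2 + \gamma R^4$ and $V_+(r) = V_-(r) - 2R^2 + 4\gamma R^4$ entering the operators $L_\pm$ of \eqref{e:linear_ops} with $\omega = 1$.

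Next, since $R$ is radial I would separate variables on $L^2(\R^3)$ using spherical harmonics, yielding a countable family of one-dimensional radial operators $L_{\pm,\ell}$ on $(0,\infty)$ carrying the centrifugal term $\ell(\ell+1)/r^2$. Writing $\Phi = (u,v)^T$, the eigenvalue equation $JL\Phi = i\mu\Phi$ is $L_- v = i\mu u$, $-L_+ u = i\mu v$, hence $L_+L_- v = \mu^2 v$. A purely imaginary eigenvalue of $JL$ therefore corresponds to a nonnegative eigenvalue $\mu^2$ of $L_+L_-$: gap eigenvalues correspond to $\mu^2 \in (0,1)$, embedded eigenvalues to $\mu^2 > 1$, and endpoint resonances to $\mu^2 = 1$. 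Because the spherical decomposition commutes with $JL$, it suffices to treat each sector $\ell \geq 0$ independently.

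To rule out gap eigenvalues sector by sector, I would work with the symmetric factorization $\widetilde L_\ell = L_{-,\ell}^{1/2} L_{+,\ell} L_{-,\ell}^{1/2}$ on the orthogonal complement of $\ker L_{-,\ell}$, rigorously count the strictly negative spectrum of $L_{\pm,\ell}$ by Sturm oscillation applied to the shooting solutions of the scalar 1D eigenvalue problems, and reconcile this count with the explicit zero modes ($R$ for $L_{-,0}$, and $\partial_{x_j}R$ for $L_{+,1}$). The only residual obstruction to a purely index-theoretic conclusion is the sign of a handful of inner products of the form $\inner{L_{-,\ell}^{-1} \chi}{\chi}$ and $\inner{L_{+,\ell}^{-1} \psi}{\psi}$, which must be evaluated numerically. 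For embedded eigenvalues and the threshold resonance at $\pm i$, I would exploit that any such state must solve the reduced ODE while satisfying the appropriate decay at infinity (borderline $L^2$ in 3D for resonances, genuine $L^2$ for embedded eigenvalues); the obstruction to existence is again the non-vanishing of an inner product between a Jost-type solution and the perturbing potential, computed by solving an auxiliary half-line boundary value problem.

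The main obstacle is not the analytic reduction, which parallels the 3D cubic case of \cite{Marzuola:2010p5770}, but the \emph{rigorous verification} of the signs of the finitely many inner products over the entire parameter range $\gamma \in (0,\,0.00989115)$. Because both $R(\cdot;1)$ and the relevant resolvent inversions depend continuously on $\gamma$, the verification must be organized either as a finite mesh in $\gamma$ with explicit Lipschitz control of each inner product in $\gamma$, or as an interval-arithmetic sweep. Matching the explicit threshold $0.00989115$ will require careful error control in the shooting for $R$, in the truncation of the half-line to a finite computational interval, and in the quadrature used for the inner products; these error budgets ultimately dictate how close to the quintic-induced boundary of the applicable regime the method can push.
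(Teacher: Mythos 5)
Your reduction to the quadratic pencil $L_+L_-v=\mu^2v$ and the symmetrized operator $L_{-}^{1/2}L_{+}L_{-}^{1/2}$ is essentially the Demanet--Schlag route for gap eigenvalues, not the route this paper takes, and the divergence matters most exactly where your sketch is thinnest: embedded eigenvalues and the endpoint resonance. For $\mu^2\geq 1$ the putative state sits inside the continuous spectrum of the pencil, so there is no Sturm oscillation count, no index argument, and no variational characterization available in that regime; your proposed criterion --- non-vanishing of an inner product between a Jost-type solution and the perturbing potential --- is not a known sufficient condition for non-existence of embedded eigenvalues of the coupled system $JL$. That no such generic criterion can exist is shown by Cuccagna--Pelinovsky and Cuccagna--Pelinovsky--Vougalter \cite{CucPel,CucPelVou}, who exhibit linearized NLS operators with genuine embedded eigenvalues. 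As written, your argument would establish at best the absence of nonzero eigenvalues in the spectral gap, which is strictly weaker than the theorem.

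The paper avoids this by proving a single coercivity statement (the Spectral Property of Definition \ref{d:spec_prop}) for the commutator bilinear form $\calB({\bf z},{\bf z})=\inner{\calL_+u}{u}+\inner{\calL_-v}{v}$, where $\calL_\pm=\tfrac12[L_\pm,\Lambda]$ and $\Lambda=\tfrac{d}{2}+{\bf x}\cdot\nabla$, on the orthogonal complement $\calU$ of the explicit finite-dimensional span \eqref{e:U_perp}. A virial identity forces $\calB$ to vanish on any eigenstate of a purely imaginary eigenvalue --- whether in the gap or embedded --- and, in 3D, on an endpoint resonance, which decays fast enough for the computation to apply; Proposition \ref{p:orhto} places all such states in $\calU$, so coercivity kills gap eigenvalues, embedded eigenvalues, and resonances in one stroke. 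The numerical content (index counts for $\calL_\pm^{(k)}$ via zero crossings of shooting solutions, and the signs of the inner products $K_j^{(0)}$, $J_j^{(0)}$, $K_1^{(1)}$, whose zero crossing in $\gamma$ produces the threshold $0.00989115$) plays the role you assign to your inner products, but it is attached to the distorted operators $\calL_\pm$, not to $L_\pm$ themselves. To repair your proposal you would need either to import this commutator identity, or to supply an actual mechanism for excluding embedded eigenvalues and the threshold resonance; the gap-eigenvalue machinery alone does not reach them.
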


For the sake of exploring the limits of our approach, we also prove:
\begin{thm}
  \label{t:1dnls}
  For 1D NLS with
  \begin{equation}
    \label{e:1dnls_bound}
    2.4537956056< \sigma <6.1288520139
  \end{equation}
  $JL$, the linearization about the soliton $R= R(\cdot;1)$, has no
  purely imaginary eigenvalues.
\end{thm}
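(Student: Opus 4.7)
The plan is to adapt the framework of Marzuola and Simpson \cite{Marzuola:2010p5770}, originally developed for the 3D cubic case, to the one-dimensional supercritical setting. The soliton
\[
R(x) = \paren{(\sigma+1)\sech^2(\sigma x)}^{1/(2\sigma)}
\]
is available in closed form, so the potentials $V_- = -R^{2\sigma}$ and $V_+ = -(2\sigma+1)R^{2\sigma}$ in $L_\pm = -\partial_{xx} + 1 + V_\pm$ are explicit. A nonzero purely imaginary eigenvalue $\lambda = i\mu$ of $JL$ is equivalent to a nontrivial $L^2$ solution of $L_+ L_- v = \mu^2 v$, and I would split the analysis into the spectral gap regime $\mu \in (0,1)$ and the embedded regime $\mu \ge 1$.

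In the gap, standard Sturmian theory applied to the explicit $R$ gives $\ker L_- = \spn\{R\}$ with $L_- \ge 0$, and $\ker L_+ = \spn\{R'\}$ with $L_+$ of Morse index one. Following the counting argument in \cite{Marzuola:2010p5770}, the existence of a gap eigenvalue is ruled out by sign conditions on inner products of the form $\inner{L_\pm\inv g}{h}$, where $g,h$ are specific linear combinations of $R$, $\partial_\omega R$, and the translation mode. These inner products do not admit closed-form evaluation for general $\sigma$, so the strategy is to recast each one as a linear boundary value problem, solve it numerically with a validated ODE integrator, and verify the sign throughout \eqref{e:1dnls_bound}. For the embedded regime I would use the reflection symmetry of $R$ to decompose $L^2(\R)$ into even and odd sectors, search for decaying solutions of the fourth-order ODE $L_+ L_- v = \mu^2 v$ on $(0,\infty)$ with the parity-appropriate boundary condition at $x=0$, and show that the specific linear combination of oscillatory asymptotic solutions required for $L^2$ decay at $+\infty$ cannot match the boundary condition at the origin. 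After an integration by parts, this obstruction again takes the form of an inner product of shooting solutions whose nonvanishing can be verified numerically in $\mu$.

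The main obstacle is uniformity in the parameters $(\sigma,\mu)$. The spectral gap portion lives over a compact set and can be handled by a straightforward numerical sweep, but the embedded portion requires nonvanishing for every $\mu \in [1,\infty)$. This forces a hybrid strategy: a pointwise numerical check on a bounded window $\mu \in [1,\mu_\star]$, combined with an analytic estimate for $\mu \gg 1$ where $\mu^2$ dominates the localized potentials and a standard Agmon-type bound precludes any $L^2$ solution. I expect the explicit numerical endpoints in \eqref{e:1dnls_bound} to be dictated precisely by where one of the verified sign conditions fails: outside that window the computer-assisted argument breaks down, and the tight decimal cutoffs suggest that a root of the relevant inner product is approached at each end, marking the limit of the method without additional analytic input.
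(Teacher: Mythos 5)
Your proposal captures the computer-assisted flavor of the argument (signs of inner products obtained by solving linear boundary value problems, with the decimal endpoints of \eqref{e:1dnls_bound} marking sign changes of those quantities), but it misses the central mechanism of the paper and, as a result, leaves the hardest part --- the embedded eigenvalues --- genuinely open. The paper does not work with $L_\pm$ and does not split the imaginary axis into a gap regime and an embedded regime. Instead it forms the commutator (virial) operators $\mathcal{L}_\pm = \tfrac12[L_\pm,\Lambda]$ with $\Lambda = \tfrac{d}{2}+\mathbf{x}\cdot\nabla$, and proves that the associated bilinear form $\mathcal{B}(\mathbf{z},\mathbf{z})=\inner{\mathcal{L}_+f}{f}+\inner{\mathcal{L}_-g}{g}$ is coercive on the subspace $\mathcal{U}$ orthogonal to $R$, $\partial_\omega R$, $xR$, $\partial_x R$, and the unstable eigenstate components $\phi_1,\phi_2$. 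Since $\mathcal{B}$ vanishes at \emph{any} eigenstate of a purely imaginary eigenvalue --- gap or embedded, for every $\mu$ at once --- coercivity rules them all out simultaneously, with no $\mu$-dependent analysis whatsoever. The verification reduces to finitely many $\sigma$-dependent (not $\mu$-dependent) quantities: indexes of $\mathcal{B}_\pm^{(e/o)}$ on the even/odd sectors, and inner products $\inner{\mathcal{L}_\pm^{-1}q}{q}$ for $q$ drawn from $\{R,\phi_2\}$, $\{\tfrac1\sigma R + xR',\phi_1\}$, and $\{xR\}$.

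Two concrete consequences. First, your embedded-regime plan --- shooting for the fourth-order system $L_+L_-v=\mu^2 v$, verifying nonvanishing of a matching condition for all $\mu\in[1,\mu_\star]$, and an Agmon-type bound for large $\mu$ --- is a different and much more demanding program; you correctly identify the uniformity in $\mu$ as the main obstacle but do not resolve it, and the paper's entire point is that the Spectral Property makes this sweep unnecessary. Second, even in the gap regime your inner products are built from the wrong operators and the wrong projections: the relevant forms are $\inner{\mathcal{L}_\pm\cdot}{\cdot}$ (with potentials like $\sigma(2\sigma+1)R^{2\sigma-1}R'$, not $-(2\sigma+1)R^{2\sigma}$), and the orthogonality conditions must include the unstable eigenstate $\boldsymbol{\phi}=(\phi_1,\phi_2)$; without projecting away $\phi_1$ and $\phi_2$ the forms $\mathcal{B}_+^{(e)}$ and $\mathcal{B}_-^{(e)}$ are not positive, since each has index one coming from a direction tied to $\boldsymbol{\phi}$. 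As written, the proposal would not close.
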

As previously mentioned, Theorem \ref{t:1dnls} has been rigorously
established for all $\sigma > 2$, \cite{KS1}.

To prove these theorems, we first show that a particular bilinear
form, which would vanish at any purely imaginary eigenvalue or
endpoint resonance, is coercive on an appropriate subspace.  This
coercivity allows us to immediately rule out such states.  To proceed
we must define distorted variants of $L_\pm$ which form the bilinear
form.
\begin{defn}
  \label{d:spec_prop}
  Given $L_\pm$ and a skew adjoint operator $\Lambda$, define the two
  Schr\"odinger operators via the commutator relations:
  \begin{equation}
    \mathcal{L}_\pm  = \frac{1}{2}[L_\pm, \Lambda] = -\nabla^2 + \mathcal{V}_\pm.
  \end{equation}
  For ${\bf z} = (u,v)^T \in L^2 \times L^2$, define the bilinear form
  \begin{equation}
    \begin{split}
      {\mathcal B}({\bf z}, {\bf z}) &= {\mathcal B}_+(u,u) +
      {\mathcal
        B}_-(v,v)\\
      &=\inner{\mathcal{L}_+ u}{u } + \inner{\mathcal{L}_-v}{v}
    \end{split}
  \end{equation}
  The operator $JL$ is said to satisfy the spectral property on the
  subspace $\mathcal{U} \subseteq L^2\times L^2$ if
  \begin{equation}
    \label{e:spec_prop}
    {\mathcal B} ({\bf z}, {\bf z}) \gtrsim
    \int \paren{ \abs{\nabla {\bf z} }^2 + e^{-\abs{\bf x}} \abs{{\bf
          z}}^2 }d {\bf x}
  \end{equation}
\end{defn}
The skew adjoint operator $\Lambda$ and the subspace $\mathcal{U}$
are, at this point, unspecified.  In our work, we use
\begin{equation}
  \label{e:skew_op}
  \Lambda = \frac{d}{2} + {\bf x } \cdot \nabla.
\end{equation}
The potentials, $\calV_\pm$, this induces from $L_\pm$ are
\begin{subequations}
  \label{e:skewed_potentials}
  \begin{align}
    \calV_+ & = \tfrac{1}{2}{\bf x} \cdot \nabla \bracket{f(R^2) + 2
      f'(R^2)R^2}=r(3f'(R^2) + 2 f''(R^2)R^2)RR' \\
    \calV_- & = \tfrac{1}{2}{\bf x} \cdot \nabla \bracket{f(R^2) }= 
    r f'(R^2)RR'
  \end{align}
\end{subequations}
For the 3D problems we consider, these correspond to:
\begin{align*}
f(s) = s^\sigma: & \calV_+  =\sigma
(2\sigma +1) R^{2\sigma-1} R' ,\quad \calV_- = \sigma r
R^{2\sigma -1 } R'\\
f(s) = s - \gamma s^2: & \calV_+ = r(3 R - 10 \gamma R^3)R',\quad \calV_- = r ( R - 2\gamma R^3) R'
\end{align*}
The particulars of this subspace will be discussed in Section
\ref{s:subspace}.  This Spectral Property appeared in the works of
Merle \& Rapha\"el, and Fibich, Merle \& Rapha\"el in their proofs of
the $\log-\log$ blowup of $L^2$ critical NLS, \cite{FMR,MR-Annals}.
It has similarly appeared in Simpson \& Zwiers, in a proof of the
$\log-\log$ blow up for vortex solitons in 2D cubic NLS,
\cite{Simpson:2010p8489}

The role of this Spectral Property was identified by G. Perelman,
\cite{GPer}, and leads directly to:
\begin{thm}
  If the Spectral Property holds for $JL$, then $JL$ has no imaginary
  eigenvalues on $\mathcal{U}$.
\end{thm}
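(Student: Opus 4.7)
The plan is a virial-style argument: using self-adjointness of $L_\pm$ and skew-adjointness of $\Lambda$, combined with the eigenvalue equation, I would force $\mathcal{B}(\mathbf{z},\mathbf{z})$ to vanish on any purely imaginary eigenfunction $\mathbf{z}$; this contradicts the coercivity asserted by the Spectral Property unless $\mathbf{z}\equiv 0$.

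First I would record a commutator identity. Since $L_\pm^\ast = L_\pm$ and $\Lambda^\ast = -\Lambda$, the commutator $[L_\pm,\Lambda]$ is self-adjoint, and moving $\Lambda$ across the inner product in each of the two terms of $\mathcal{L}_\pm = \tfrac{1}{2}[L_\pm,\Lambda]$ yields, for any $w$ in the common domain,
\begin{equation*}
\langle \mathcal{L}_\pm w, w\rangle \;=\; \tfrac{1}{2}\langle \Lambda w, L_\pm w\rangle + \tfrac{1}{2}\langle L_\pm w, \Lambda w\rangle \;=\; \mathrm{Re}\,\langle L_\pm w, \Lambda w\rangle .
\end{equation*}
Hence $\mathcal{B}(\mathbf{z},\mathbf{z}) = \mathrm{Re}\,\langle L_+ u,\Lambda u\rangle + \mathrm{Re}\,\langle L_- v,\Lambda v\rangle$.

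Next I would insert the eigenvalue equation. Unpacking $JL\mathbf{z} = i\mu\mathbf{z}$ componentwise gives $L_+u = -i\mu v$ and $L_-v = i\mu u$, so
\begin{equation*}
\mathcal{B}(\mathbf{z},\mathbf{z}) \;=\; \mu\,\mathrm{Im}\bigl[\langle v,\Lambda u\rangle - \langle u, \Lambda v\rangle\bigr].
\end{equation*}
A final use of skew-adjointness, $\langle u,\Lambda v\rangle = -\overline{\langle v,\Lambda u\rangle}$, shows that $\langle v,\Lambda u\rangle - \langle u,\Lambda v\rangle = 2\,\mathrm{Re}\,\langle v,\Lambda u\rangle \in \mathbb{R}$, so $\mathcal{B}(\mathbf{z},\mathbf{z}) = 0$. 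The bound \eqref{e:spec_prop} then forces $\mathbf{z}\equiv 0$, the desired contradiction. The case $\mu=0$ is handled identically, and is moot in any event since $\mathcal{U}$ is chosen to exclude the generalized kernel of $JL$.

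The main technical obstacle I anticipate is justifying the integration-by-parts manipulations that underlie the skew-adjointness of $\Lambda = d/2 + \mathbf{x}\cdot\nabla$, whose coefficient grows linearly in $\mathbf{x}$. To make the pairings $\langle L_\pm w,\Lambda w\rangle$ rigorous one needs $\mathbf{z}$ to lie in the domain of $\Lambda$ and the boundary terms at infinity to vanish. For eigenvalues within the spectral gap $[-i\omega,i\omega]$ this follows from Agmon-type exponential decay of $\mathbf{z}$; for embedded eigenvalues, which are still assumed $L^2$, one combines elliptic regularity with the exponential localization of $V_\pm$ and a cutoff/limit procedure to reduce to the same algebraic identity. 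Once this is set up, the calculation above closes the argument.
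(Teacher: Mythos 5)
Your proposal is correct and is essentially the paper's own argument (which it defers to Marzuola \& Simpson): a direct computation using the self-adjointness of $L_\pm$, the skew-adjointness of $\Lambda$, and the eigenvalue equation shows that $\mathcal{B}(\mathbf{z},\mathbf{z})$ vanishes on any eigenstate of a purely imaginary eigenvalue, contradicting the coercivity \eqref{e:spec_prop} unless $\mathbf{z}\equiv 0$. Your closing remarks about the domain of $\Lambda$ and the cutoff/limit procedure match the technical caveats the paper itself acknowledges.
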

The proof is quite simple and appears in \cite{Marzuola:2010p5770}.
Briefly, one shows by direct computation that $\mathcal{B}$ vanishes
at any eigenstates corresponding to imaginary eigenvalues.  In 3D,
this same analysis will rule out endpoint resonances.

Though all of the cases for which we have results correspond to
orbitally unstable solitons, the results remain of interest.  They can
be used to prove results on constrained asymptotic stability, as in
\cite{Schlag:2009p346}, where one projects away from the linearly
unstable direction of $JL$.  In addition they are intrinsically
interesting as results on the spectrum of Hamiltonian operators.
Finally, these theorems also map out the scope of success for this
approach.

Our paper is organized as follows.  In Section \ref{s:review} we
review some relevant algebraic properties of $JL$, and define the
subspace, $\calU$, that will be used to prove the Spectral Property.
In Section \ref{s:prelim}, we present some preliminary results used in
the proof.  Section\ref{s:results} presents our computations and the
main results.  Some discussion of the limits of this approach are
raised in Section \ref{s:discussion}.

{\bf Acknowledgements:} {The authors wish to thank J.L. Marzuola for
  some helpful comments, D.E. Pelinovsky for suggesting a comparison
  with the Demanet \& Schlag threshold, and C. Sulem for suggesting
  the extension to CQNLS.  This work was supported in part by NSERC.}

\section{Algebraic Structure}
\label{s:review}

As noted in the introduction, $\sigma(JL)$ includes the essential
spectrum, lying on a portion of the imaginary axis, along with some
eigenvalues.  Since the algebraic structure of $JL$ is used in our
proof, we review the discrete spectrum here.

\subsection{Discrete Spectrum}
\label{s:discrete}

Given that \eqref{e:nls} supports a solitary wave solution, we readily
observe:
\begin{thm}[Kernel]
  $JL$ has a kernel of algebraic multiplicity at least $2d + 2$:
  \begin{gather*}
    JL \begin{pmatrix} 0 \\ R \end{pmatrix} = 0, \quad
    JL \begin{pmatrix}
      \partial_{x_j}  R \\ 0 \end{pmatrix} = 0\\
    JL \begin{pmatrix} \partial_\omega R \\ 0 \end{pmatrix} =
    - \begin{pmatrix} 0 \\ R \end{pmatrix}\quad JL \begin{pmatrix}0\\
      x_j R \end{pmatrix} = -2 \begin{pmatrix} \partial_{x_j} R \\
      0\end{pmatrix}
  \end{gather*}
  for $j=1,\ldots d$.
\end{thm}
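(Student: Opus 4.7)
The plan is to verify each of the four displayed identities by direct differentiation of the soliton equation \eqref{e:soliton}, and then count the resulting Jordan structure. Since $JL$ acts block-antidiagonally as $JL(\zeta,\eta)^T = (L_-\eta,\,-L_+\zeta)^T$, each claim reduces to a scalar identity for $L_\pm$.

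The first identity, $L_- R = 0$, is obtained by substituting $V_- = -f(R^2)$ into the definition of $L_-$; the resulting expression is exactly the left-hand side of \eqref{e:soliton}. The second, $L_+ \partial_{x_j} R = 0$, follows by differentiating \eqref{e:soliton} in $x_j$ and using the chain rule $\partial_{x_j}[f(R^2)R] = [f(R^2) + 2f'(R^2)R^2]\partial_{x_j} R$, which exactly produces $-V_+ \partial_{x_j} R$. This reflects translation invariance. The third identity comes from differentiating \eqref{e:soliton} in $\omega$, which produces one extra $-R$ term and, upon the same expansion of the nonlinearity, an equation of the form $L_+ \partial_\omega R = \pm R$; careful sign tracking then reproduces the formula stated in the theorem. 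For the fourth, I apply the commutator $[\nabla^2, x_j] = 2\partial_{x_j}$, obtaining $\nabla^2(x_j R) = x_j \nabla^2 R + 2\partial_{x_j} R$, so that $L_-(x_j R) = x_j L_- R - 2\partial_{x_j} R = -2\partial_{x_j} R$ by the first identity.

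To finish, I observe that these relations organize the listed vectors into $d+1$ Jordan chains of length two: one chain $\{(0, R)^T, (\partial_\omega R, 0)^T\}$ associated with the gauge and scaling symmetries, and $d$ chains $\{(\partial_{x_j} R, 0)^T, (0, x_j R)^T\}$ associated with translations and Galilean boosts. Collecting these vectors yields algebraic multiplicity at least $2d+2$. Linear independence is immediate from parity: $R$ is even (spherically symmetric bound state) while $\partial_{x_j} R$ and $x_j R$ are odd in $x_j$, and across the chains the nonzero entries sit in opposite block positions, so no nontrivial linear combination can vanish. I do not anticipate any real obstacle; the only subtlety is confirming smooth $\omega$-dependence of $R$ so that $\partial_\omega R$ is well defined in the appropriate function space, which is standard for the profiles considered here.
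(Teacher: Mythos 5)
Your proposal is correct and follows the same route the paper implicitly takes (the theorem is stated as a direct observation, with no written proof): verify $L_-R=0$ from \eqref{e:soliton}, differentiate the profile equation in $x_j$ and $\omega$ for the $L_+$ identities, use $[\nabla^2,x_j]=2\partial_{x_j}$ for the last one, and count the $d+1$ length-two Jordan chains. One remark on the step you left to ``careful sign tracking'': the computation gives $L_+\partial_\omega R=-R$, hence $JL(\partial_\omega R,0)^T=(0,R)^T$, so the minus sign in the paper's third displayed identity appears to be a typo; this is immaterial to the generalized-kernel structure and the multiplicity count.
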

For power nonlinearities, we can, and do, use, that
\begin{equation}
  \label{e:analytic_deriv}
  2 \partial_\omega|_{\omega = 1} R = \tfrac{1}{\sigma} R+ {\bf x} \cdot \grad R.
\end{equation}

In addition to these elements, there are two additional eigenstates
that can reside on the imaginary axis, the origin, or the real axis,
depending on the slope condition, \eqref{e:slope_condition}.  For
orbitally unstable problems we have:
\begin{thm}[Off Axis Eigenvalues]
  There exists a pair of real, nonzero, eigenvalues located at $\pm
  \mu_\star$.  Corresponding to $\mu_\star>0$, is the eigenstate
  $\boldsymbol{\phi} = (\phi_1, \phi_2)^T$.  For $d=1$, the $\phi_j$
  are even functions, and for $d>1$, the $\phi_j$ are radially
  symmetric.
\end{thm}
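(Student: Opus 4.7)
The plan is to reduce the eigenvalue equation $JL\boldsymbol{\phi}=\mu\boldsymbol{\phi}$ to a self-adjoint scalar problem whose negative eigenvalues capture the real part of the spectrum of $JL$, and then to invoke the classical Grillakis--Shatah--Strauss instability theory.

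Writing out the system componentwise gives
\[
L_-\phi_2 = \mu\phi_1, \qquad -L_+\phi_1 = \mu\phi_2,
\]
so $\phi_2$ satisfies $L_+L_-\phi_2 = -\mu^2\phi_2$. Since $R>0$ is a ground state of the scalar Schr\"odinger operator appearing in $L_-$, we have $L_-\ge 0$ with $\ker L_- = \spn\{R\}$, and pairing the first relation with $R$ forces $\phi_1\perp R$. On $\{R\}^\perp$, $L_-^{1/2}$ is well-defined and invertible, and choosing $\phi_2\in\{R\}^\perp$ and setting $\chi := L_-^{1/2}\phi_2$ recasts the problem as the self-adjoint eigenvalue equation
\[
\tilde L\,\chi = -\mu^2\,\chi, \qquad \tilde L := L_-^{1/2}\,L_+\,L_-^{1/2}.
\]
Real nonzero eigenvalues $\mu$ of $JL$ are thus in bijection with negative eigenvalues of $\tilde L$.

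To produce a negative eigenvalue of $\tilde L$ I would invoke orbital instability. Differentiating \eqref{e:soliton} in $\omega$ yields $L_+\partial_\omega R=-R$, whence $\inner{L_+^{-1}R}{R} = -\inner{\partial_\omega R}{R} = -\tfrac12\partial_\omega\norm{R}_{L^2}^2$. In the supercritical regimes covered by Theorems~\ref{t:3dnls}--\ref{t:1dnls} the slope condition \eqref{e:slope_condition} is negative, so this quantity is strictly positive; the standard Grillakis--Shatah--Strauss computation then exhibits a trial $\chi$ (built from $R$ and $\partial_\omega R$) with $\inner{\tilde L\chi}{\chi}<0$, yielding the eigenvalue pair $\pm\mu_\star\in\R\setminus\{0\}$.

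For the symmetry assertion, the potentials $V_\pm$ inherit the radial (respectively even, when $d=1$) symmetry of $R$, so $L_\pm$, and therefore $\tilde L$, commute with the natural $O(d)$ action (respectively with the reflection $x\mapsto -x$). The radial/even subspace is invariant, and a Perron--Frobenius-type argument applied to a positivity-improving resolvent $(\tilde L-\lambda)^{-1}$ restricted to this subspace places the bottom of $\sigma(\tilde L)$ at a simple, radial/even eigenvalue. Back-substituting, $\phi_2 = L_-^{-1/2}\chi$ and $\phi_1 = \mu_\star^{-1}L_-\phi_2$ inherit the same symmetry.

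The main obstacle is the Morse-index bookkeeping: to conclude that $\mu_\star$ is simple and isolated rather than one of several, one must track negative directions through the nontrivial kernels of both $L_\pm$ and through the substitution $\chi=L_-^{1/2}\phi_2$. This is essentially the Grillakis--Shatah--Strauss instability-index calculation, and relies on specific features of the supercritical nonlinearities (in particular, that $L_+$ has exactly one negative direction and a $d$-dimensional kernel) to force $\tilde L$ to have a unique simple negative eigenvalue.
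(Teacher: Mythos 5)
Your argument is essentially the paper's: the authors give no independent proof of this theorem, attributing the existence of the real pair $\pm\mu_\star$ directly to the Grillakis--Shatah--Strauss results in \cite{Grillakis:1987p115}, and your reduction to $\tilde L = L_-^{1/2}L_+L_-^{1/2}$ on $\{R\}^\perp$ combined with the negative slope condition (via $\inner{L_+^{-1}R}{R}=-\tfrac12\partial_\omega\norm{R}_{L^2}^2>0$) is precisely that argument. The one soft spot is the symmetry step --- positivity-improvement of $(\tilde L-\lambda)^{-1}$ is not evident since $L_-^{1/2}$ need not preserve positivity --- but the conclusion follows more directly by decomposing into spherical harmonics (even/odd parts in $d=1$) and observing that $L_\pm$, hence $\tilde L$, are nonnegative on every non-radial (non-even) sector, so any negative eigenvalue of $\tilde L$ and its eigenfunction must lie in the radial (even) sector.
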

These off axis eigenvalues appear in Figure \ref{f:spec}.  Their
existence follows form the results of \cite{Grillakis:1987p115}.
Alternatively, Schlag presented a continuity argument in
\cite{Schlag:2009p346} that relies on the knowledge of the two
additional eigenstates that appear at the origin in the critical case,
$\sigma d = 2$.
 
It is well known that for $f(s) = s^\sigma$, the slope is negative
when $\sigma d > 2$.  Due to scaling, this holds for all $\omega>0$.
For CQNLS, some solitons are stable and others are unstable, depending
on $\gamma$ and the soliton parameter $\omega$.  In this work, we
shall fix $\omega = 1$ for which solitons are known to exist provided
$\gamma < 3/16$, \cite{sulem1999nse}.  We now determine the range of
$0\leq \gamma< 3/16$ for which the solitons are
unstable.\footnote{Alternatively, we could set $\gamma=1$, and vary
  $\omega$.}

At $\gamma = 0$, this corresponds to the 3D cubic problem, which we
know to be unstable.  By continuity, we expect there is some open set
of $\gamma$ near zero for which the solitons are orbitally unstable.
To identify the threshold value of $\gamma$, we numerically compute
\eqref{e:slope_condition}, and find that it changes sign at
\begin{equation}
  \gamma_\star \approx 0.0255453.
\end{equation}
The slope condition values as a function of $\gamma$ are plotted in
Figure \ref{f:slope_condition}.  See Appendix \ref{s:numerics} for
details of this computation.

\begin{figure}
  \includegraphics[width=2.4in]{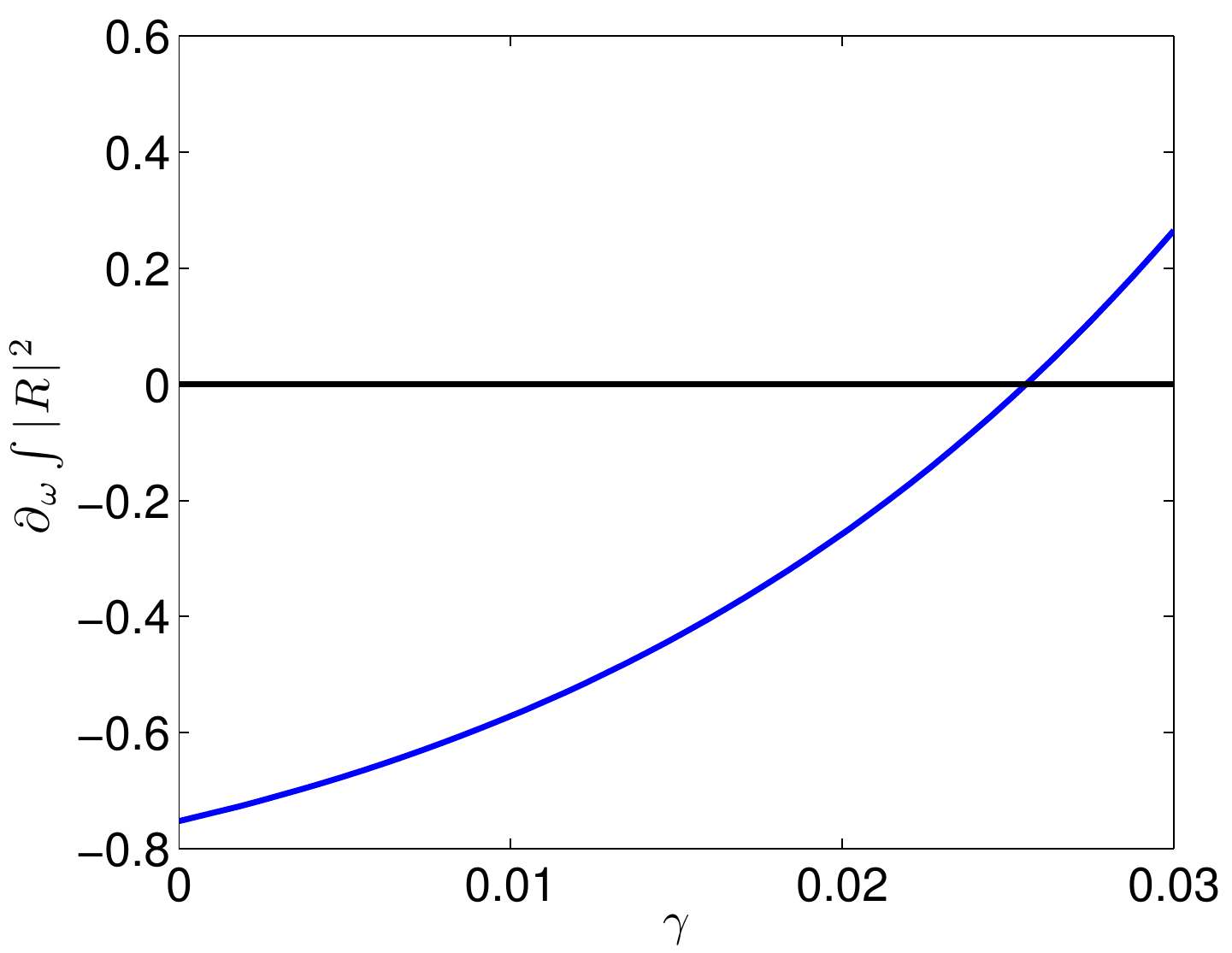}
  \caption{The numerically computed \eqref{e:slope_condition} at
    $\omega=1$ changes from negative to positive at the critical
    $\gamma_\star \approx 0.0255453$. }
  \label{f:slope_condition}
\end{figure}

\subsection{Subspaces}
\label{s:subspace}

As discussed in Section 2.2 of \cite{Marzuola:2010p5770}, the subspace
$\mathcal{U}$ for which we wish to establish the Spectral Property of
Definition \ref{d:spec_prop} must satisfy two properties:
\begin{itemize}
\item If they exist, eigenstates of purely imaginary eigenvalues must
  reside in $\mathcal{U}$;
\item $\mathcal{U}$ must be orthogonal to the negative subspaces of
  $\calL_\pm$.
\end{itemize}
Obviously, if both properties are satisfied, then there are no purely
imaginary eigenvalues.  But it may be possible to construct a
$\mathcal{U}$ that is orthogonal to the eigenstates of such
eigenvalues, and on which the operators are positive.

A practical subspace makes use of the eigenstates of the adjoint
operator, $(JL)^\ast$.  These are closely related to the elements of
$JL$.  Given $\boldsymbol{\psi}= (\psi_1, \psi_2)^T$, an eigenstate of
$JL$ with eigenvalue $\lambda$,
\begin{equation}
  \label{e:adjoint_eigenvalues}
  (JL)^\ast \begin{pmatrix} \psi_2\\ \psi_1 \end{pmatrix} =
  -\lambda \begin{pmatrix} \psi_2\\ \psi_1 \end{pmatrix}
\end{equation}
Consequently,
\begin{prop}
  \label{p:orhto}
  Let $\boldsymbol{\psi}= (\psi_1, \psi_2)^T$ be an eigenstate
  corresponding to a purely imaginary eigenvalue of $JL$.
  \begin{itemize}
  \item Any element of $\ker_g((JL)^\ast)$ is orthogonal to
    $\boldsymbol{\psi}$;
  \item The unstable eigenstate $\boldsymbol{\phi}$ of $JL$ satisfies
    the relations:
    \begin{equation*}
      \inner{\psi_1}{\phi_2} = 0,\quad \inner{\psi_2}{\phi_1}  = 0.
    \end{equation*}
  \end{itemize}
\end{prop}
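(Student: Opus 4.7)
The plan is to apply the standard fact that eigenstates of $JL$ and $(JL)^\ast$ corresponding to eigenvalues $\lambda$ and $\mu$ with $\lambda\neq\overline{\mu}$ are $L^2$-orthogonal, specialized through the identification of $(JL)^\ast$ eigenstates recorded in \eqref{e:adjoint_eigenvalues}. The first bullet falls out immediately by a power-of-the-operator trick, while for the second bullet I would combine two such orthogonality relations, using the Hamiltonian spectral symmetry $\mu_\star\leftrightarrow -\mu_\star$, to decouple the single bilinear identity into the two claimed equations.

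For the first bullet, fix $\mathbf{w}\in\kerg((JL)^\ast)$ and let $k$ be a positive integer with $((JL)^\ast)^k\mathbf{w}=0$. Iterating $JL\boldsymbol{\psi}=\lambda\boldsymbol{\psi}$ gives $(JL)^k\boldsymbol{\psi}=\lambda^k\boldsymbol{\psi}$, and moving the $k$ powers across the pairing yields
\begin{equation*}
\lambda^k\inner{\boldsymbol{\psi}}{\mathbf{w}}=\inner{(JL)^k\boldsymbol{\psi}}{\mathbf{w}}=\inner{\boldsymbol{\psi}}{((JL)^\ast)^k\mathbf{w}}=0.
\end{equation*}
Since $\lambda$ is a nonzero purely imaginary number, $\lambda^k\neq0$ and hence $\inner{\boldsymbol{\psi}}{\mathbf{w}}=0$.

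For the second bullet, \eqref{e:adjoint_eigenvalues} identifies $(\phi_2,\phi_1)^T$ as an eigenstate of $(JL)^\ast$ with eigenvalue $-\mu_\star$. Pairing it against $JL\boldsymbol{\psi}=\lambda\boldsymbol{\psi}$ and using that $\mu_\star\in\mathbb{R}$ gives
\begin{equation*}
(\lambda+\mu_\star)\inner{\boldsymbol{\psi}}{(\phi_2,\phi_1)^T}=0,
\end{equation*}
which, since $\lambda$ is imaginary and $\mu_\star>0$ is real, produces the single relation $\inner{\psi_1}{\phi_2}+\inner{\psi_2}{\phi_1}=0$. To split this sum I would produce a second, independent relation by invoking the Hamiltonian parity of the spectrum: a one-line computation using the block form $JL=\bigl(\begin{smallmatrix}0 & L_-\\ -L_+ & 0\end{smallmatrix}\bigr)$ shows that $(\phi_1,-\phi_2)^T$ is an eigenstate of $JL$ with eigenvalue $-\mu_\star$. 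Applying \eqref{e:adjoint_eigenvalues} then identifies $(-\phi_2,\phi_1)^T$ as a $(JL)^\ast$-eigenstate with eigenvalue $\mu_\star$, and repeating the pairing argument yields $-\inner{\psi_1}{\phi_2}+\inner{\psi_2}{\phi_1}=0$. Adding and subtracting the two equations gives each inner product separately.

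The only step requiring any real verification is the companion-eigenstate identity for $-\mu_\star$, and since this is purely algebraic from the block form of $JL$, I expect no substantive obstacle; the rest is the standard distinct-eigenvalue orthogonality trick.
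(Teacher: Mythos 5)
Your proof is correct and follows essentially the same route as the argument the paper defers to (Section 2.2 of \cite{Marzuola:2010p5770}): the standard adjoint-pairing, distinct-eigenvalue orthogonality, with the second bullet decoupled into its two separate relations by using the companion eigenstate $(\phi_1,-\phi_2)^T$ of $JL$ at $-\mu_\star$ together with \eqref{e:adjoint_eigenvalues}. The only point worth making explicit is that the first bullet requires $\lambda\neq 0$, which is the intended reading of ``purely imaginary eigenvalue'' here, since $0$ is always an eigenvalue and the generalized kernels of $JL$ and $(JL)^\ast$ are not mutually orthogonal.
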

See Section 2.2 of \cite{Marzuola:2010p5770} for a proof.  Using this,
we define $\mathcal{U}$ to be the orthogonal complement of
\begin{equation}
  \label{e:U_perp}
  \spn \set{\begin{pmatrix} R \\ 0\end{pmatrix}, \begin{pmatrix}
      0\\\partial_\omega R \end{pmatrix}, \begin{pmatrix}
      0   \\ \partial_{x_j}R \end{pmatrix}, \begin{pmatrix} {x_j}R \\
      0\end{pmatrix},\begin{pmatrix} \phi_2 \\ 0\end{pmatrix}, \begin{pmatrix} 0\\ \phi_1\end{pmatrix} }
\end{equation}
Indeed,
\begin{thm}
  If the Spectral Property can be established on $\calU$ given by the
  orthogonal complement to \eqref{e:U_perp}, or any subspace
  containing $\calU$, $JL$ will not have any purely imaginary
  eigenvalues on $L^2 \times L^2$.
\end{thm}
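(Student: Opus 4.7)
The plan is to argue by contradiction, using the two ingredients already assembled in the excerpt: the algebraic orthogonality relations of Proposition \ref{p:orhto}, and Perelman's observation that $\mathcal{B}$ vanishes at purely imaginary eigenstates. Suppose $JL\boldsymbol{\psi} = i\nu\boldsymbol{\psi}$ with $\nu\in\R$ and $\boldsymbol{\psi}=(\psi_1,\psi_2)^T\in L^2\times L^2$ nonzero. The goal is to show $\boldsymbol{\psi}\in\calU$, apply the Spectral Property, and reach $\boldsymbol{\psi}=0$.

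First I would verify $\boldsymbol{\psi}\in\calU$, i.e. that $\boldsymbol{\psi}$ is orthogonal to every generator in \eqref{e:U_perp}. The key observation is \eqref{e:adjoint_eigenvalues}: swapping components sends $\ker_g(JL)$ into $\ker_g((JL)^\ast)$. Applied to the generalized kernel elements listed in Section \ref{s:discrete}, this produces $(R,0)^T$, $(x_jR,0)^T$, $(0,\partial_\omega R)^T$, and $(0,\partial_{x_j}R)^T$ as elements of $\ker_g((JL)^\ast)$. Proposition \ref{p:orhto} then forces
\begin{equation*}
\inner{\psi_1}{R}=\inner{\psi_1}{x_jR}=\inner{\psi_2}{\partial_\omega R}=\inner{\psi_2}{\partial_{x_j}R}=0,
\end{equation*}
which handles four of the six generators. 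The remaining two, $(\phi_2,0)^T$ and $(0,\phi_1)^T$, are precisely dealt with by the second bullet of Proposition \ref{p:orhto}: $\inner{\psi_1}{\phi_2}=\inner{\psi_2}{\phi_1}=0$. Hence $\boldsymbol{\psi}\in\calU$, and equally $\boldsymbol{\psi}\in\calU'$ for any larger subspace $\calU'\supseteq\calU$.

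Next I would invoke the computation from \cite{Marzuola:2010p5770} (originally due to G.\ Perelman) showing $\mathcal{B}(\boldsymbol{\psi},\boldsymbol{\psi})=0$. Briefly, from $JL\boldsymbol{\psi}=i\nu\boldsymbol{\psi}$ one reads off $L_-\psi_2=i\nu\psi_1$ and $L_+\psi_1=-i\nu\psi_2$; pairing against $\Lambda\psi_j$, using the skew-adjointness of $\Lambda$ together with $\calL_\pm=\tfrac{1}{2}[L_\pm,\Lambda]$, and cancelling the two cross terms (whose sum is proportional to $\nu\,\mathrm{Re}\inner{\psi_1}{\Lambda\psi_2}$ with opposite signs) produces $\mathcal{B}_+(\psi_1,\psi_1)+\mathcal{B}_-(\psi_2,\psi_2)=0$. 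The final step is immediate: if the Spectral Property holds on $\calU$ (or on any $\calU'\supseteq\calU$ to which $\boldsymbol{\psi}$ belongs a fortiori), then \eqref{e:spec_prop} gives
\begin{equation*}
0=\mathcal{B}(\boldsymbol{\psi},\boldsymbol{\psi})\gtrsim\int\paren{\abs{\nabla\boldsymbol{\psi}}^2+e^{-\abs{\bx}}\abs{\boldsymbol{\psi}}^2}d\bx,
\end{equation*}
forcing $\boldsymbol{\psi}\equiv 0$, a contradiction.

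The theorem as stated is thus primarily a packaging result; the real analytic content lives in establishing the Spectral Property on $\calU$, which is the business of the subsequent sections. Within the proof itself the only step requiring care is the Perelman cancellation producing $\mathcal{B}(\boldsymbol{\psi},\boldsymbol{\psi})=0$, and since this identity is already in the literature I would simply cite \cite{Marzuola:2010p5770} rather than reproduce it. No obstacle is anticipated beyond bookkeeping of the orthogonality conditions.
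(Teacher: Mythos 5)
Your proposal is correct and follows the same route the paper intends: Proposition \ref{p:orhto} (applied via the component-swap \eqref{e:adjoint_eigenvalues} to the generalized kernel elements and to the unstable eigenstate) places any purely imaginary eigenstate in $\calU$, and the Perelman identity $\mathcal{B}(\boldsymbol{\psi},\boldsymbol{\psi})=0$ combined with the coercivity \eqref{e:spec_prop} forces $\boldsymbol{\psi}=0$. The paper leaves this argument implicit (citing \cite{Marzuola:2010p5770} for the vanishing of $\mathcal{B}$, exactly as you do), so your write-up is simply a more explicit version of the same proof.
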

We succeed in proving:
\begin{thm}
  \label{t:spec_prop_ortho}
  Assume ${\bf z} = (f,g)^T \in L^2\times L^2$ satisfies the
  orthogonality conditions
  \begin{equation}
    \label{e:orhto}
    \inner{f}{R}=\inner{f}{\phi_2} = \inner{g}{\partial_\omega|_{\omega=1} R}
    = \inner{g}{\phi_1} = \inner{f}{{\bf x} R}=0
  \end{equation}
  Then the Spectral Property holds for:
  \begin{description}
  \item[3D NLS] Provided
    \[
    0.807425 < \sigma < 1.12092;
    \]
  \item[3D CQNLS] Provided
    \[
    \gamma < 0.00989115;
    \]
  \item[1D NLS] Provided
    \[
    2.4537956056 < \sigma < 6.1288520139.
    \]
  \end{description}
\end{thm}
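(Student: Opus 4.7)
The bilinear form $\mathcal{B}$ decouples as $\mathcal{B}_+(f,f)+\mathcal{B}_-(g,g)$, so the strategy is to prove, separately, the weighted coercivity bound for $\mathcal{L}_+$ on $\{f\in L^2:\,\inner{f}{R}=\inner{f}{\phi_2}=\inner{f}{\mathbf{x}R}=0\}$ and for $\mathcal{L}_-$ on $\{g\in L^2:\,\inner{g}{\partial_\omega R}=\inner{g}{\phi_1}=0\}$. Because the potentials $\mathcal{V}_\pm$ in \eqref{e:skewed_potentials} are radial (resp.\ even in 1D), each of $\mathcal{L}_\pm$ commutes with the natural angular action, and I would first decompose $f=\sum_{\ell,m}f_{\ell m}(r)Y_{\ell m}(\hat{\bx})$ (in 3D) or $f=f_{\even}+f_{\mathrm{odd}}$ (in 1D), reducing $\mathcal{L}_\pm$ to a family of half-line Schr\"odinger operators
\[
\mathcal{L}_\pm^{(\ell)}=-\partial_r^2-\tfrac{d-1}{r}\partial_r+\tfrac{\ell(\ell+d-2)}{r^2}+\mathcal{V}_\pm(r).
\]
The orthogonality constraints on $f$ sit in the $\ell=0$ sector ($R$, $\phi_2$) and the $\ell=1$ sector ($x_jR$, one per azimuthal index); those on $g$ sit entirely in $\ell=0$. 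In 1D the analogous partition places $R,\phi_2,\partial_\omega R,\phi_1$ in the even sector and $xR$ in the odd sector.

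Next I would rigorously count, sector by sector, the number $n_\pm^{(\ell)}$ of strictly negative eigenvalues of $\mathcal{L}_\pm^{(\ell)}$ and check that the available orthogonality conditions exactly saturate these counts. A Sturm oscillation argument applied to a numerically computed ground state of each half-line operator gives the count; for large $\ell$ the angular barrier $\ell(\ell+d-2)/r^2$ dominates $\mathcal{V}_\pm$, yielding $n_\pm^{(\ell)}=0$ analytically for all but finitely many $\ell$, with the cutoff determined from the numerically computed $\sup_r|\mathcal{V}_\pm|$. The expected picture, inherited from \cite{Marzuola:2010p5770}, is $n_+^{(0)}=2$, $n_+^{(1)}=1$, $n_-^{(0)}=2$ in 3D, and all remaining sectors non-negative without projection.

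For the sectors where the count is positive, I would invoke the central lemma of Marzuola--Simpson: if a self-adjoint $\mathcal{L}$ has $n$ negative eigenvalues, trivial kernel, and $\psi_1,\dots,\psi_n$ are linearly independent, then $\inner{\mathcal{L} f}{f}\ge 0$ on $\{f:f\perp\psi_j\}$ if and only if the matrix
\[
M_{ij}=\inner{\mathcal{L}^{-1}\psi_i}{\psi_j}
\]
is negative definite. For each relevant sector I would numerically solve the boundary value problems $\mathcal{L}_\pm^{(\ell)}\zeta_i=\psi_i$, using the radial profile $R$ and the eigenstate $\phi_{1,2}$ computed to controlled precision, form $M$, and verify $M\prec 0$ with certified arithmetic bounds. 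This is the computer-assisted core of the argument; the explicit thresholds in \eqref{e:3dnls_bound}, \eqref{e:cqnls:bound}, \eqref{e:1dnls_bound} are precisely the parameter values at which some eigenvalue of $M$ crosses zero, and tracking these crossings delineates the admissible ranges.

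Finally, to upgrade the non-negativity $\mathcal{B}_\pm\ge 0$ to the weighted bound \eqref{e:spec_prop}, I would argue as follows. First, using the numerical gap between the top eigenvalue of $M$ and $0$, one obtains a strict lower bound $\mathcal{B}_\pm(f,f)\ge c\|f\|_{L^2}^2$ on the constrained subspace, via the standard quadratic-form perturbation argument. Then, because $\mathcal{V}_\pm(r)$ decays exponentially (as $R$ does) and $-\nabla^2\ge 0$, one can split $\int e^{-|\bx|}|f|^2\lesssim \|f\|_{L^2}^2$ and $\int|\nabla f|^2\lesssim\inner{\mathcal{L}_\pm f}{f}+C\|f\|_{L^2}^2$ by moving $\mathcal{V}_\pm$ to the right-hand side; combining these with the strict $L^2$-coercivity yields \eqref{e:spec_prop}. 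The main obstacle is the third step: certifying negative-definiteness of $M$ uniformly in $\sigma$ (or $\gamma$) across the claimed range, since $R$, $\phi_{1,2}$, and $\zeta_i$ must all be recomputed at each parameter value, and the boundary of the admissible range is exactly where the numerical Gram matrix becomes singular, demanding tight error control on the inner products.
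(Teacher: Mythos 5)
Your overall architecture (harmonic/parity decomposition, counting negative directions by oscillation theory, projecting onto the constrained subspace, and reducing to signs of numerically computed inner products) is the paper's architecture. But two of your concrete steps do not survive contact with the actual computations. First, your eigenvalue counts are wrong: the paper's Propositions \ref{p:nls3d_idx}, \ref{p:cqnls_idx}, and \ref{p:nls1d_idx} establish $\ind(\calB_+^{(0)})=\ind(\calB_-^{(0)})=\ind(\calB_+^{(1)})=1$, not $n_+^{(0)}=n_-^{(0)}=2$. With one negative direction and \emph{two} constraint functions, the correct sufficient condition is not $M\prec 0$ but that the $2\times 2$ Gram matrix have at least one nonpositive eigenvalue; the paper realizes this by exhibiting a single negative direction $\bar Q=\calL^{-1}\bar q$ for one explicit linear combination $\bar q$ of the two constraint functions, and the quantity it certifies is $\det M/K_2^{(0)}<0$ (or $\det M/K_1^{(0)}<0$ for CQNLS), not negative definiteness. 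This is not cosmetic: for 3D NLS the paper finds $J_1^{(0)}\geq 0$ on $\sigma_2<\sigma<\sigma_1$ while the combined ratio is still negative there, so $M$ is demonstrably \emph{not} negative definite on part of the claimed range; your criterion would fail to certify exactly the regime that determines the endpoint $0.807425$ in the theorem.

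Second, your coercivity upgrade cannot work as written. Since $\calV_\pm\to 0$ at infinity, $\sigma_{\ess}(\calL_\pm)=[0,\infty)$, so no bound of the form $\calB_\pm(f,f)\geq c\norm{f}_{L^2}^2$ can hold on any finite-codimension subspace; the most one can extract from nonnegativity is a bound by a \emph{decaying} weight. The paper's device, which you are missing, is to perturb the operators to $\bar\calL_\pm=\calL_\pm-\delta_0 e^{-\abs{\bf x}}$ at the outset, use the stability of the index and of the invertibility (Propositions \ref{p:idx_stability} and \ref{p:inverse_stability}) to run the whole projection argument for $\bar\calB$, and then read off
\begin{equation*}
  \calB({\bf z},{\bf z})\geq \delta_0\int e^{-\abs{\bf x}}\abs{{\bf z}}^2\,d{\bf x},
\end{equation*}
from which the gradient term is recovered by interpolating with the exponentially decaying $\calV_\pm$. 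A related technical point you gloss over: $\bar Q$ is not in $L^2$ (it is only bounded in 1D and decays like $r^{-1-k}$ in 3D), so the decomposition $f=c\bar Q+f^\perp$ and the identity $\inner{f}{\bar q}=c\,\bar\calB(\bar Q,\bar Q)$ require a cutoff-and-limit argument, which the paper delegates to \cite{FMR,Marzuola:2010p5770}.
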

For 1D and 3D NLS with power nonlinearities, we use
\eqref{e:analytic_deriv}.

\section{Preliminaries}
\label{s:prelim}

We shall prove the coercivity of $\mathcal{B}$ in three steps, closely
following \cite{FMR, Marzuola:2010p5770}, and omitting some details.
\begin{itemize}
\item In 1D, we decompose into even and odd functions and study
  \begin{equation}
    \label{e:1d_decomp}
    \begin{split}
      \calB_+(f,f) + \calB_-(g,g) &= \calB_+^{(e)}(f ^{(e)},f ^{(e)})
      +
      \calB_+^{(o)}(f ^{(o)},f ^{(o)}) \\
      &\quad + \calB_-^{(e)}(g^{(e)},g^{(e)})+
      \calB_-^{(o)}(g^{(o)},g^{(o)})
    \end{split}
  \end{equation}
  We then show that on $\mathcal{U}$, each of the four forms,
  $\calB_\pm^{(e/o)}$, is positive.

  Similarly, in 3D, we decompose into spherical harmonics,
  \begin{equation}
    \label{e:3d_decomp}
    \begin{split}
      \calB_+(f,f) + \calB_-(g,g) & = \sum_{k=0}^\infty
      \sum_{l=0}^{L_k}
      \calB_+^{(k)}(f^{(k,l)},f^{(k,l)}) \\
      &\quad + \sum_{k=0}^\infty \sum_{l=0}^{L_k}
      \calB_-^{(k)}(g^{(k,l)},g^{(k,l)})
    \end{split}
  \end{equation}
  where $f^{(k,l)}$ and $g^{(k,l)}$ are the components of $f$ and $g$
  in harmonic $k$ with angular component $l$.  As $\calL_\pm$ are
  radially symmetric operators, they are independent of $l$.  Here
  too, we shall prove positivity of each $\calB_\pm^{(k)}$ on $\calU$.

\item For each of these bilinear forms, we shall determine the
  codimension of a subspace on which they are positive, called its
  {\it index}.  This is problem specific and can vary with the
  dimension and choice of nonlinearity.

\item Lastly, we shall show that the $L^2$ orthogonality of $f$ and
  $g$ to $\mathcal{U}^\perp$ induces orthogonality, {\it with respect
    to to the bilinear form}, to the negative subspaces of the
  form. See Section \ref{s:nls3d_proof} for an example.


\end{itemize}

\subsection{Indexes and Eigenvalues}
\label{s:idx}

The index of a bilinear form $B$ on a vector space $V$ is defined as
\begin{equation}
  \begin{split}
    \ind_V(B) \equiv \max \{ k \in \mathbb{N}\mid & \text{there exists
      a
      subspace $\tilde V\subseteq V$} \\
    & \text{of codimension $k$ such that $B|_{\tilde V}$ is positive}
    \}
  \end{split}
\end{equation}
To compute the indexes, we rely on the following propositions:

\begin{prop}
  \label{p:1d_index}

  Let $U^{(e)}$ and $U^{(o)}$ solve the initial value problems
  \begin{align*}
    L U^{(e)} &= - \frac{d^2}{dx^2} u^{(e)} + V(x) U^{(e)} = 0, \quad
    U^{(e)}(0)=1,\quad \frac{d}{dx} U^{(e)}(0)=0\\
    L U^{(o)} &= - \frac{d^2}{dx^2} u^{(o)} + V(x) U^{(o)} = 0, \quad
    U^{(o)}(0)=0,\quad \frac{d}{dx} U^{(o)}(0)=1
  \end{align*}
  where $V$ is even, sufficiently smooth, and $\abs{V(x)} \lesssim
  e^{-\kappa\abs{x}}$, $\kappa>0$, and let
  \begin{equation*}
    B(\cdot, \cdot) \equiv \inner{L \cdot }{\cdot}.
  \end{equation*}
  Then the number of zeros of $U^{(e)}$ and $U^{(o)}$ is finite, and
  \begin{align*}
    \ind_{H^1_e} (B) &= \text{number of positive roots of $U^{(e)}$}
    \\
    &=\text{number of negative eigenvalues in $H^1_e$}\\
    \ind_{H^1_o} (B) &= \text{number of positive roots of
      $U^{(o)}$}\\
    &=\text{number of negative eigenvalues in $H^1_o$}
  \end{align*}
\end{prop}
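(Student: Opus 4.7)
The plan is to recognize Proposition~\ref{p:1d_index} as the classical half-line Sturm--Liouville oscillation theorem, packaged through the even/odd parity decomposition. First I would exploit the evenness of $V$: restriction to $[0,\infty)$ gives a unitary identification of $H^1_e$ with $H^1(0,\infty)$ equipped with the Neumann condition $f'(0)=0$, and of $H^1_o$ with $H^1(0,\infty)$ equipped with the Dirichlet condition $f(0)=0$. Under this identification the bilinear form $B$ becomes the quadratic form of the self-adjoint half-line Schr\"odinger operator $L^{(e)}$, respectively $L^{(o)}$, obtained from $L$ with the corresponding boundary condition at $0$. Because $V$ is bounded and decays exponentially, the essential spectrum of each is $[0,\infty)$ and the strictly negative discrete spectrum is finite.

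Second, I would apply the standard min-max identity
\begin{equation*}
  \ind_{H^1_{e/o}}(B) = N_-(L^{(e/o)}),
\end{equation*}
where $N_-$ denotes the number of strictly negative eigenvalues counted with multiplicity: the span of the negative eigenstates is a subspace on which $B$ is strictly negative, giving $\ge$, while its spectral complement is a subspace on which $B\ge 0$, giving $\le$. I would assume tacitly that $0$ is not an eigenvalue of $L^{(e/o)}$, which is verified directly in each concrete application.

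Third, I would invoke the half-line Sturm oscillation theorem: $N_-(L^{(e/o)})$ equals the number of zeros in the open interval $(0,\infty)$ of the zero-energy solution obeying the prescribed boundary datum at $0$, which is precisely $U^{(e)}$ in the Neumann case and $U^{(o)}$ in the Dirichlet case (with the mandatory node of $U^{(o)}$ at $x=0$ not counted). The cleanest derivation is a Pr\"ufer-angle/deformation argument: letting $U_\lambda$ solve $(L-\lambda)U_\lambda=0$ with the prescribed initial data at $0$, track how many interior nodes of $U_\lambda$ escape to $+\infty$ as $\lambda$ decreases from $0$ to $-\infty$, and check that each such crossing is in bijection with a negative eigenvalue of $L^{(e/o)}$.

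The main technical point, where I expect to spend actual effort, is finiteness of the zero count, which I would establish via asymptotics at infinity. A Volterra iteration of $LU=0$ against the free solutions $1$ and $x$ of $-U''=0$, using $\abs{V(x)}\lesssim e^{-\kappa\abs{x}}$, yields
\begin{equation*}
  U(x) = \alpha x + \beta + \bigo(e^{-\kappa x}), \qquad x\to\infty,
\end{equation*}
with $(\alpha,\beta)$ determined by the initial data and $V$. Unless $(\alpha,\beta)=(0,0)$---a degenerate case in which $U$ would itself be a zero-energy bound state, excluded in our applications---$U$ is eventually monotone and of one sign; combined with the observation that any zero of $U$ in $(0,\infty)$ is simple (otherwise $U\equiv 0$ by uniqueness for the ODE), this bounds the total number of zeros, closing the proposition.
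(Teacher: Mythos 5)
Your proposal is correct and is essentially the argument the paper relies on: the paper does not prove Proposition~\ref{p:1d_index} itself but defers to \cite{Marzuola:2010p5770} and references therein, remarking only that the result is ``closely related to Sturm oscillation theory,'' which is precisely the parity decomposition (even $\leftrightarrow$ Neumann, odd $\leftrightarrow$ Dirichlet on the half-line), the min-max identification of the index with the negative eigenvalue count, and the half-line oscillation theorem that you lay out. The only caveat is the degenerate case $(\alpha,\beta)=(0,0)$ in your finiteness argument, which you correctly flag as excluded in the applications, since it would make $0$ an eigenvalue of $L^{(e/o)}$, a possibility you have already set aside.
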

$H^1_e$ and $H^1_o$ are the even and odd subspaces of $H^1$.

\begin{prop}
  \label{p:3d_index}

  Let $U^{(k)}$ solve the initial value problem
  \begin{gather*}
    L^{(k)} U^{(k)} = \bracket{-\frac{d^2}{dr^2} -
      \frac{2}{r}\frac{d}{dr} + V(r) +
      \frac{k^2}{r^2}} U^{(k)}=0\\
    \lim_{r\to 0} r^{-k} U^{(k)}(r) = 1, \quad \lim_{r\to
      0}\frac{d}{dr}\paren{ r^{-k} U^{(k)}(r)} = 0,
  \end{gather*}
  for $k = 0, 1,2,3, \ldots$ where $V$ is sufficiently smooth, and
  $\abs{V(r)} \lesssim e^{-\kappa\abs{r}}$,$\kappa>0$, and let
  \begin{equation*}
    B^{(k)}(\cdot, \cdot) \equiv \inner{L^{(k)} \cdot }{\cdot}.
  \end{equation*}
  Then the number of zeros of $U^{(k)}$ is finite, and
  \begin{align*}
    \ind_{H^1_{\rad}} (B^{(0)}) &= \text{number of positive roots of
      $U^{(0)}$}\\
    &=\text{number of negative eigenvalues in $H^1_\rad$}\\
    \ind_{H^1_{\rad+}}(B^{(k)}) & = \text{number of positive roots of
      $U^{(k)}$, for $k = 1, 2, \ldots$}\\
    &=\text{number of negative eigenvalues in $H^1_{\rad+}$}
  \end{align*}
\end{prop}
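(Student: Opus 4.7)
The plan is to reduce the radial 3D problem to a standard half-line Sturm--Liouville problem and then invoke Sturm oscillation, paralleling the argument for Proposition \ref{p:1d_index}. First, I would change variables by setting $U^{(k)}(r) = r^{-1} u^{(k)}(r)$, which converts $L^{(k)} U^{(k)} = 0$ into the 1D form
\[
-\tfrac{d^{2}}{dr^{2}} u^{(k)} + \bigl(V(r) + k^{2}/r^{2}\bigr) u^{(k)} = 0, \qquad r \in (0,\infty),
\]
and turns the normalization at $r=0$ into $u^{(k)}(r) \sim r^{k+1}$, so $U^{(k)}$ corresponds to the regular solution at the origin. The weight $r^{2}\,dr$ in the radial $L^{2}$ pairing is exactly what is needed so that $B^{(k)}(\cdot,\cdot)$ coincides, after this substitution, with the Dirichlet form of the 1D Schrödinger operator $-\partial_{r}^{2} + V(r) + k^{2}/r^{2}$ on the half-line.

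Next I would prove finiteness of zeros. Since $|V(r)| \lesssim e^{-\kappa r}$, for large $r$ the equation reduces, up to exponentially small corrections, to the free centrifugal equation whose linearly independent solutions are $r^{k}$ and $r^{-k-1}$. A standard asymptotic matching then gives $U^{(k)}(r) \sim c\, r^{k}$ or $\sim c\, r^{-k-1}$ as $r \to \infty$ for some $c \neq 0$, so there are no zeros beyond some $r_{\ast}$; on $[0, r_{\ast}]$ analyticity of the ODE rules out accumulation, which yields the first claim.

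Finally I would identify both quantities with the dimension $N$ of the negative subspace of $L^{(k)}$. Take the Friedrichs self-adjoint realization of $L^{(k)}$ on $L^{2}((0,\infty), r^{2}\,dr)$; exponential decay of $V$ gives $\sigma_{\ess} = [0,\infty)$ and only finitely many negative eigenvalues. The min--max principle immediately yields $\ind_{H^{1}_{\rad}}(B^{(0)}) = N$, and likewise for $H^{1}_{\rad+}$ when $k \geq 1$. On the other hand, the classical Sturm oscillation theorem, applied to the half-line problem by letting the spectral parameter decrease from $+\infty$ and tracking zeros of the regular solution $U^{(k)}_{\lambda}$ moving in from $r = +\infty$, says that exactly one new positive zero of $U^{(k)} = U^{(k)}_{0}$ appears each time $\lambda$ crosses a negative eigenvalue. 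Hence the number of positive roots of $U^{(k)}$ equals $N$.

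The main obstacle is the singular endpoint at $r=0$. One must verify that the Friedrichs extension genuinely enforces the boundary behavior $u^{(k)}(r) \sim r^{k+1}$ picked out by $U^{(k)}$, so that the regular solution plays the role of the ``left initial-value solution'' of Proposition \ref{p:1d_index}. For $k \geq 1$ the centrifugal term $k^{2}/r^{2}$ places the equation in the limit-point case at $0$ and the selection is automatic; for $k = 0$ one must check that finiteness of both the $L^{2}(r^{2}\,dr)$ norm of $U = u/r$ and the Dirichlet form forces $u(0) = 0$ with $u'(0)$ free, equivalently $U(0)$ finite with $U'(0) = 0$, matching the stated initial conditions. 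Once this is pinned down, Sturm oscillation and the min--max identification are standard.
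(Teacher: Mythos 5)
Your proposal is correct and takes essentially the same route the paper relies on: the paper gives no proof of its own but defers to Marzuola \& Simpson and the Sturm oscillation theory cited there, which is exactly your reduction $U^{(k)}=r^{-1}u^{(k)}$ to a half-line Sturm--Liouville problem, followed by the oscillation theorem and the min--max identification of the index with the number of negative eigenvalues. One minor remark: your far-field exponents $r^{k}$ and $r^{-k-1}$ (and the stated normalization at $r=0$) presuppose the centrifugal term $k(k+1)/r^{2}$, consistent with the paper's appendix, rather than the $k^{2}/r^{2}$ written verbatim in the proposition, so you have implicitly (and reasonably) corrected what appears to be a typo.
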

The space $H^1_{\rad+}$ is the subspace of radially symmetric $H^1$
functions for which
\begin{equation*}
  \int \abs{f}^2 \abs{{\bf x}}^{-2} d {\bf x} < \infty.
\end{equation*}

The indexes in the different harmonics have an invaluable monotonicity
property that permits us to restrict our attention to a finite number
of harmonics:
\begin{cor}
  \label{c:mono_index}
  Fixing the potential $V(r)$, the bilinear forms $B^{(k)}$ satisfy
  \begin{equation*}
    \ind(B^{(k+1)})\leq \ind(B^{(k)})
  \end{equation*}
\end{cor}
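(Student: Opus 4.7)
The plan is to reduce the inequality to monotonicity of the negative eigenvalue counts of $L^{(k)}$ via the identification provided by Proposition \ref{p:3d_index}. The key identity is
$$L^{(k+1)} - L^{(k)} = \frac{(k+1)^2 - k^2}{r^2} = \frac{2k+1}{r^2} \geq 0,$$
so as quadratic forms,
$$B^{(k+1)}(u,u) = B^{(k)}(u,u) + (2k+1) \int \abs{{\bf x}}^{-2} \abs{u}^2\, d{\bf x} \geq B^{(k)}(u,u)$$
for every $u$ in the common form domain. Geometrically, passing from harmonic $k$ to harmonic $k+1$ simply adds a nonnegative centrifugal contribution, which can only make $\calL_\pm^{(k)}$ more positive.

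By Proposition \ref{p:3d_index}, $\ind(B^{(k)})$ equals the number of negative eigenvalues of $L^{(k)}$ on $H^1_\rad$ when $k=0$ and on $H^1_{\rad+}$ when $k\geq 1$. The exponential decay of $V$ ensures, via Weyl's theorem, that the essential spectrum of each $L^{(k)}$ is $[0,\infty)$, so the negative spectrum consists of a finite list $\lambda_1(k) \leq \lambda_2(k) \leq \ldots \leq \lambda_{N(k)}(k) < 0$ of isolated eigenvalues of finite multiplicity. For the step $k \to k+1$ with $k \geq 1$, the form domains coincide, and the min-max principle combined with the form bound above yields $\lambda_n(k+1) \geq \lambda_n(k)$ for every $n$; hence $N(k+1) \leq N(k)$, which is the claim.

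The remaining case $k=0 \to k=1$ carries the additional subtlety that the ambient space shrinks from $H^1_\rad$ to $H^1_{\rad+}$. Since $H^1_{\rad+} \subset H^1_\rad$, the min-max taken over a strictly smaller family of trial subspaces can only produce larger eigenvalues, so $\lambda_n(1) \geq \lambda_n(0)$ as well, and the same counting argument applies. The one point I expect to require care is verifying that elements of $H^1_{\rad+}$ are legitimate test functions for the singular centrifugal potential $(2k+1)/r^2$; this is precisely what the defining integrability condition $\int \abs{u}^2 \abs{{\bf x}}^{-2}\, d{\bf x} < \infty$ in $H^1_{\rad+}$ provides, so no essentially new work is needed. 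Beyond this, the result is morally the statement that higher angular momentum produces a more repulsive effective potential, and no serious obstacle is anticipated.
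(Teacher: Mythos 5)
Your argument is correct and is essentially the standard one: the paper defers the proof to \cite{Marzuola:2010p5770}, where the corollary rests on exactly the comparison $B^{(k+1)}(u,u)=B^{(k)}(u,u)+(2k+1)\int \abs{{\bf x}}^{-2}\abs{u}^2\,d{\bf x}\geq B^{(k)}(u,u)$, so that any subspace on which $B^{(k)}$ is positive remains positive for $B^{(k+1)}$; your route through Proposition \ref{p:3d_index}, min--max, and negative-eigenvalue counting is an equivalent packaging of the same idea. Your attention to the $k=0\to 1$ step, where the ambient space shrinks from $H^1_\rad$ to $H^1_{\rad+}$ and one must combine the domain restriction with the form inequality, is the right point to flag and is handled correctly.
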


The last result that we shall make use of in computing the index of a
bilinear form is that it is stable to perturbation by a sufficiently
localized potential:
\begin{prop}
  \label{p:idx_stability}
  Fixing an operator $L$ from Propositions \ref{p:1d_index} or
  \ref{p:3d_index}, or Corollary \ref{c:mono_index}, there exists
  $\delta_0$ sufficiently small such that the bilinear form, $\bar B$,
  induced by the perturbed operator
  \begin{equation*}
    \bar L \equiv L - \delta_0 e^{-\abs{\bf x}}
  \end{equation*}
  satisfies
  \begin{equation*}
    \ind(\bar B) = \ind(B)
  \end{equation*}
\end{prop}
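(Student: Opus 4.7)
My plan is to use the characterization of $\ind(B)$ as the number $N$ of negative eigenvalues of $L$ afforded by Propositions~\ref{p:1d_index} and~\ref{p:3d_index}, and show that this count is preserved by the attractive perturbation $-\delta_0 e^{-\abs{\bx}}$ for $\delta_0$ small. The exponential decay of $V$ and of the perturbation guarantee that $-\delta_0 e^{-\abs{\bx}}$ is relatively $(-\nabla^2)$-compact, so $\sigma_\ess(\bar L) = \sigma_\ess(L) = [0,\infty)$ throughout the argument.

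First I would invoke min-max. Let $\lambda_1 \leq \cdots \leq \lambda_N < 0$ denote the negative eigenvalues of $L$ in the relevant subspace, and write $\lambda_n(\cdot)$ for the $n$-th min-max value. Since $-\delta_0 e^{-\abs{\bx}}$ is a bounded, nonpositive multiplication operator of norm at most $\delta_0$, one has
\begin{equation*}
\lambda_n(L) - \delta_0 \;\leq\; \lambda_n(\bar L) \;\leq\; \lambda_n(L), \qquad n \geq 1.
\end{equation*}
The upper inequality shows $\lambda_n(\bar L) < 0$ for $n = 1,\dots,N$, yielding $\ind(\bar B) \geq N$ immediately, and the lower inequality guarantees that these eigenvalues stay separated from $0$ provided $\delta_0 < \abs{\lambda_N(L)}$.

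The harder direction, $\ind(\bar B) \leq N$, amounts to ruling out the creation of any new negative eigenvalue of $\bar L$ out of the threshold of the essential spectrum. I would exclude this via Birman--Schwinger: letting $P$ denote the projection onto the orthogonal complement of the negative eigenspace of $L$, the number of negative eigenvalues of $\bar L$ exceeding $N$ equals the number of eigenvalues larger than $1$ of the compact operator
\begin{equation*}
K_{\delta_0} \;=\; \delta_0\, e^{-\abs{\bx}/2} \, (P L P)^{-1} \, e^{-\abs{\bx}/2},
\end{equation*}
which has norm $O(\delta_0)$ and hence has no eigenvalue reaching $1$ once $\delta_0$ is small enough. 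Equivalently, in the shooting formulation of Propositions~\ref{p:1d_index} and~\ref{p:3d_index}, continuous dependence of the fundamental solution $U$ on the potential preserves each positive zero (the zeros being simple by ODE uniqueness for $LU=0$) and preserves the asymptotic behavior $U \sim Ax + B$ in 1D, respectively $U \sim A r^k + B r^{-k-1}$ in 3D, at infinity, so no additional positive zero can drift in from $\infty$.

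The main obstacle is the tacit nondegeneracy condition that $L$ admits neither a zero eigenvalue nor a zero-energy resonance, equivalently $A \neq 0$ in the above asymptotic expansion of $U$. Without this, an arbitrarily small attractive perturbation could convert a zero mode into a genuine negative eigenvalue and raise the index. This nondegeneracy must be verified for each operator to which the proposition is applied; in our setting it follows from the fundamental-solution computations underlying Section~\ref{s:results}. The admissible threshold $\delta_0$ is then determined quantitatively by $\abs{\lambda_N(L)}$ together with a lower bound on the leading asymptotic coefficient $A$.
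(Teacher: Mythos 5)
The paper does not prove this proposition in-text; it defers to \cite{Marzuola:2010p5770}, where the argument is exactly your second, ODE-based one: the shooting solution $U$ depends continuously on the potential, its positive zeros are simple (by uniqueness for the second-order ODE, $U$ and $U'$ cannot vanish simultaneously), and no zero can enter from infinity because the leading asymptotic coefficient is nonzero; Propositions \ref{p:1d_index} and \ref{p:3d_index} then convert the unchanged zero count back into the unchanged index. Your first half (min-max plus Birman--Schwinger) is a genuinely different, purely operator-theoretic route to the same conclusion: min-max gives $\ind(\bar B)\geq\ind(B)$ for free since the perturbation is attractive, and the threshold analysis replaces the far-field zero count; it is more standard functional analysis but requires care in formulating the Birman--Schwinger operator when $L$ already has negative spectrum, whereas the shooting argument handles everything in one stroke and is what the numerics in Appendix \ref{s:idx_numerics} are actually set up to certify. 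Your observation that the statement tacitly assumes no zero eigenvalue and no zero-energy resonance (equivalently $A\neq 0$, i.e.\ $C_0^{(k)}\neq 0$ in the paper's notation) is correct and worth making explicit: the 1D free Laplacian shows the proposition is false without it, since an arbitrarily weak attractive well creates a bound state there. In the paper this hypothesis is exactly what the asymptotic-constant checks of Appendix \ref{s:idx_numerics} verify, so your proof is complete modulo the same numerical input the paper already relies on.
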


The proofs of the preceding results are given in
\cite{Marzuola:2010p5770}, and references therein.  As one might
suspect, this is closely related to Sturm oscillation theory.

\subsection{Boundary Value Problems}
\label{s:bvp_ip}

In the course of proving the spectral property, we will need to solve
a series of problems of the form
\begin{equation*}
  \calL u = f
\end{equation*}
for $u$ where $\calL$ is one of the above operators and $f$ is a
localized.  Though the solutions of these problems are found
numerically, the invertibility of the operators can be rigorously
justified, up to the index computation.  For the index, we continue to
rely on numerics.

\begin{prop}
  \label{p:1d_inverse}
  Let $f$ be a smooth function, with even or odd symmetry, satisfying
  the bound $\abs{f(x)}\lesssim e^{-\kappa\abs{x}}$ for some
  $\kappa>0$.  Then there exists a unique solution
  \begin{equation*}
    u \in L^\infty(\R)\cap C^2(\R)
  \end{equation*}
  to
  \begin{equation*}
    \calL u = f.
  \end{equation*}
  where $\calL$ is one of $\calL_\pm$.  $u$ possesses the same
  symmetry as $f$.
\end{prop}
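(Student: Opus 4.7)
The plan is to treat $\calL u = f$ as a second-order linear ODE on $\R$ with smooth, exponentially decaying, and (because $R$ is even) even potential $\calV$, and to produce the bounded solution by a Green's function built from Jost-type solutions of the homogeneous problem.

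First, I would construct two linearly independent pairs of homogeneous solutions $\{\psi_+, \tilde\psi_+\}$ and $\{\psi_-, \tilde\psi_-\}$ to $\calL u = 0$ distinguished by their asymptotic behavior at $+\infty$ and $-\infty$. Because $|\calV(x)| \lesssim e^{-\kappa|x|}$, a standard contraction argument applied to the Volterra integral equation satisfied by the perturbation from the free equation $-u'' = 0$ produces $\psi_\pm$ with $\psi_\pm(x) \to 1$ and $\tilde\psi_\pm(x)/x \to \pm 1$ at the corresponding end. The key non-generic step is to verify that $\psi_+$ and $\psi_-$ are linearly independent, equivalently that the Wronskian $W = \psi_+\psi_-' - \psi_+'\psi_-$ is nonzero, which is precisely the absence of a zero resonance in $L^\infty$. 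Using the evenness of $\calV$, this reduces to certifying that neither $U^{(e)}$ nor $U^{(o)}$ from Proposition \ref{p:1d_index} is globally bounded, which can be established by propagating their values and derivatives to a radius past which the exponential smallness of $\calV$ permits rigorous asymptotic matching with the free solutions and a lower bound on the linear growth rate.

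With $W \ne 0$ in hand, define the Green's function
\begin{equation*}
G(x,y) = \frac{1}{W}\begin{cases} \psi_+(x)\psi_-(y), & x \ge y, \\ \psi_-(x)\psi_+(y), & x < y, \end{cases}
\end{equation*}
and set $u(x) = \int_\R G(x,y) f(y)\,dy$. The boundedness of $\psi_\pm$ at their respective ends, combined with the exponential decay of $f$, yields $u \in L^\infty$, while differentiating under the integral (valid since $G$ is continuous and the jump in $\partial_x G$ at $x=y$ is $-1/W$) gives $u \in C^2$ and $\calL u = f$. Uniqueness in $L^\infty\cap C^2$ is then immediate: any two such solutions differ by an $L^\infty$ element of $\ker \calL$, which is trivial by step two. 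The symmetry statement follows by uniqueness: since $\calV$ is even, $x \mapsto u(-x)$ solves $\calL u = f(-x)$, so if $f$ has definite parity then $\pm u(-x)$ is again a bounded solution of $\calL u = f$, forcing $u$ to inherit the parity of $f$.

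The main obstacle is the linear independence of $\psi_+$ and $\psi_-$, that is, ruling out a zero-energy resonance for $\calL_\pm$ at the edge of the essential spectrum. Pure ODE reasoning yields only generic non-degeneracy; for the specific $\calV_\pm$ of \eqref{e:skewed_potentials} one must combine the oscillation-theoretic framework of Proposition \ref{p:1d_index} with quantitative control of $U^{(e)}$ and $U^{(o)}$ on a finite computational interval to exclude the marginal case rigorously, which accounts for the caveat "up to the index computation" flagged in the text.
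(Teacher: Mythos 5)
Your proposal is correct and follows essentially the argument the paper intends: the paper itself omits the proof of Proposition \ref{p:1d_inverse}, deferring to \cite{Marzuola:2010p5770}, where the inversion is carried out by exactly this Jost-solution/Green's-function (variation of parameters) construction, with boundedness of $u$ coming from the exponential decay of $f$ absorbing the at-most-linear growth of $\psi_\mp$ at the opposite end. You have also correctly located the one non-soft ingredient --- ruling out a globally bounded zero mode, i.e.\ that neither $U^{(e)}$ nor $U^{(o)}$ stays bounded --- which is precisely the numerically certified asymptotic-constant computation behind the index propositions and the paper's caveat that invertibility holds ``up to the index computation.''
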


\begin{prop}
  \label{p:3d_inverse}
  Let $f$ be a smooth, radially symmetric, function satisfying the
  bound $\abs{f(r)}\lesssim e^{-\kappa r}$ for some $\kappa > 0$.
  Then there exists a unique solution
  \begin{equation*}
    (1+r^{k+1})u \in L^\infty(\R^3)\cap C^2(\R^3)
  \end{equation*}
  to
  \begin{equation*}
    \calL u = f
  \end{equation*}
  where $\calL$ is one of $\calL^{(k)}_\pm$.  $u$ is radially
  symmetric.
\end{prop}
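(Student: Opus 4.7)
The plan is to reduce everything to an ODE analysis in $r$ and then construct the solution via variation of parameters. Writing the radial operator as $\calL^{(k)} = -\partial_r^2 - (2/r)\partial_r + k^2/r^2 + \calV(r)$ with $\calV$ exponentially decaying, the homogeneous equation $\calL^{(k)}u = 0$ is a compact perturbation of an Euler equation both near $r = 0$ and near $r = \infty$, so the indicial roots of the unperturbed equation dictate the admissible asymptotic behaviors at each endpoint.

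First I would build two canonical homogeneous solutions. The regular solution $u_1$ at the origin is obtained by Picard iteration on the Volterra integral equation for $w := r^{-k}u$, using that $\calV$ is bounded near zero, and it is normalized so that $r^{-k} u_1(r)\to 1$ as $r\to 0^+$. The decaying solution $u_2$ at infinity is obtained by a contraction mapping argument in a weighted $L^\infty$ space on $[r_0,\infty)$ for $w := r^{k+1} u$; here the exponential decay of $\calV$ is exactly what makes the associated Volterra operator a contraction, and one gets $r^{k+1} u_2(r)\to 1$ as $r\to\infty$.

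Next I would treat uniqueness and then build the particular solution. Any homogeneous $u$ with $(1+r^{k+1})u\in L^\infty$ must be regular at the origin (the $r^{-(k+1)}$ Frobenius branch is excluded by the weight) and must decay at least as $r^{-(k+1)}$ at infinity (the constant/$r^k$ branch is excluded), so it is proportional to both $u_1$ and $u_2$, forcing $u_1\parallel u_2$. Such dependence would place $0$ in the point spectrum of $\calL^{(k)}_\pm$ in the relevant radial sector, which is ruled out by the index/negative-subspace computation of Section \ref{s:idx} applied to the specific $\calV_\pm$ at hand; this yields uniqueness. For existence, Sturm-Liouville form shows that $W_0 := r^2(u_1 u_2' - u_1' u_2)$ is constant (and nonzero by the previous step), and variation of parameters gives
\begin{equation*}
  u(r) = \frac{1}{W_0}\left( u_2(r)\int_0^r u_1(s) f(s)\, s^2\, ds + u_1(r)\int_r^\infty u_2(s) f(s)\, s^2\, ds \right).
\end{equation*}
The $C^2$ regularity, radial symmetry, and the weighted bound $(1+r^{k+1}) u\in L^\infty$ then follow from the asymptotics of $u_1, u_2$ together with the exponential decay of $f$.

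The main obstacle is the construction of $u_2$ with the prescribed algebraic rate at infinity. Although the operator is a perturbation of an Euler equation, the first-order term $-(2/r)\partial_r$ is not small at infinity, so one must pick the right reduction, namely $w = r^{k+1} u$, before the remaining integral equation becomes a contraction purely by virtue of the exponential smallness of $\calV$. A secondary concern is that the uniqueness statement tacitly depends on $0$ not being an eigenvalue of $\calL^{(k)}_\pm$ in the relevant subspace; this is a spectral fact which, in this paper, is verified numerically through the index calculation used elsewhere.
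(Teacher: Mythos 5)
Your construction is correct and is essentially the argument the paper relies on: the paper itself defers the proof to \cite{Marzuola:2010p5770}, where the solution is built exactly this way, from the Frobenius branch $r^k$ regular at the origin and the $r^{-(k+1)}$ branch decaying at infinity, glued by variation of parameters, with uniqueness reduced to non-degeneracy of the Wronskian and hence to the index information (the paper's remark that ``knowledge of the index of each of the operators is necessary in proving the uniqueness'' is precisely this step). One small precision worth adding: the non-degeneracy you need is that the regular solution $U^{(k)}$ has a nonvanishing coefficient $C_0^{(k)}$ on the growing branch at infinity, which for $k\geq 1$ is equivalent to $0$ not being an eigenvalue in that sector, but for $k=0$ the dependent-solutions scenario produces a $1/r$ tail that is \emph{not} in $L^2(\R^3)$, i.e.\ a zero-energy resonance rather than an eigenvalue; the paper's numerical check of the asymptotic constants $C_0^{(k)}$ in Appendix \ref{s:idx_numerics} is what excludes both cases, so you should cite that non-degeneracy verification rather than only the eigenvalue count.
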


While the solutions of the 3D problems decay $\propto r^{-1-k}$, the
solutions of the 1D problems are merely bounded.  Knowledge of the
index of each of the operators is necessary in proving the uniqueness
of the solutions.  Additionally the invertibility of the operators is
stable to perturbation:

\begin{prop}
  \label{p:inverse_stability}
  For sufficiently small $\delta_0>0$, the results of Propositions
  \ref{p:1d_inverse} and \ref{p:3d_inverse} will also apply to
  \begin{equation*}
    \bar\calL  = \calL - \delta_0 e^{-\abs{\bf x}}
  \end{equation*}
\end{prop}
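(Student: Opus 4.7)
The strategy is to regard $\bar\calL = \calL - \delta_0 e^{-\abs{\bf x}}$ as a Schr\"odinger operator of exactly the same structural form as $\calL$, namely $-\nabla^2 + \bar\calV$ with $\bar\calV = \calV_\pm - \delta_0 e^{-\abs{\bf x}}$ still smooth, real-valued, and exponentially decaying. Consequently, every tool that was used to prove Propositions \ref{p:1d_inverse} and \ref{p:3d_inverse}---variation of parameters built from one regular and one bounded homogeneous solution in 1D, and the analogous radial ODE analysis after decomposition into spherical harmonics in 3D---applies verbatim to $\bar\calL$. The asymptotics of homogeneous solutions as $\abs{\bf x}\to\infty$ are governed by the free Laplacian, so the admissible decaying (3D) or bounded (1D) solutions exist with the same rates as for $\calL$, and the $r^k$ behavior at the origin in the $k$-th harmonic is similarly unchanged.

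First, I would recast existence and uniqueness as a Fredholm alternative on the appropriate symmetry-restricted spaces (even/odd in 1D, each angular sector in 3D). The perturbation $\delta_0 e^{-\abs{\bf x}}$ is a bounded multiplication by an exponentially localized function, so adding it to $\calL$ leaves the essential spectrum at $[0,\infty)$ intact and preserves the Fredholm index. Uniqueness then reduces to showing that $\bar\calL$ has trivial kernel on the relevant space, after which existence of the $L^\infty$-bounded (1D) or $r^{-1-k}$-decaying (3D) solution follows from the Fredholm alternative together with the exponential decay of $f$.

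Second, the triviality of the kernel is exactly where Proposition \ref{p:idx_stability} is invoked. That proposition guarantees that for $\delta_0$ below some threshold, $\ind(\bar B)=\ind(B)$ on every symmetry class; via Propositions \ref{p:1d_index} and \ref{p:3d_index}, this is equivalent to the statement that the positive roots of the fundamental solutions $U^{(e/o)}$ or $U^{(k)}$ neither appear nor disappear under the perturbation. A zero eigenvalue or endpoint resonance for $\bar\calL$ would correspond precisely to a root of a fundamental solution migrating in from $+\infty$, a scenario excluded by Proposition \ref{p:idx_stability} for $\delta_0$ sufficiently small. With the kernel ruled out on the relevant symmetry sector, one constructs $u$ by the same Green's function formula as in the unperturbed case, selecting the combination of fundamental solutions that is regular at the origin and bounded/decaying at infinity; the regularity $C^2$ and the symmetry inheritance from $f$ are immediate from the integral representation since $\bar\calV$ shares the parity or radial symmetry of $\calV_\pm$.

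The main obstacle is the kernel-triviality step: a priori, an arbitrarily small perturbation of a Schr\"odinger operator sitting at the edge of its essential spectrum could spawn a zero-energy bound state or threshold resonance and destroy invertibility, and this cannot be controlled by soft perturbation theory alone. The proposition is only true because Proposition \ref{p:idx_stability} already supplies a quantitative, index-level stability result, itself a consequence of Sturm oscillation theory for the radial/1D ODEs. Once that input is granted, the remainder of the proof is routine bookkeeping of the asymptotic behavior of the fundamental solutions of $\bar\calL$ and the Green's function integral they generate.
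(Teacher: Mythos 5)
The paper itself gives no proof of this proposition; like Propositions \ref{p:1d_inverse}, \ref{p:3d_inverse} and \ref{p:idx_stability}, it is delegated to \cite{Marzuola:2010p5770}. Your overall strategy is the intended one: $\bar\calV = \calV - \delta_0 e^{-\abs{\bf x}}$ is still smooth, symmetric, and exponentially decaying, so the construction of the inverse by variation of parameters from the fundamental solutions carries over verbatim, and the only thing that could fail is the appearance of a zero-energy eigenvalue or resonance, which must be excluded for $\delta_0$ small.

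Two caveats. First, the Fredholm-alternative framing in your second paragraph does not work as stated: $0$ sits at the bottom of $\sigma_\ess(\bar\calL)=[0,\infty)$, so $\bar\calL$ is not Fredholm from $H^2$ to $L^2$ at zero energy --- which is exactly why the solutions in Propositions \ref{p:1d_inverse} and \ref{p:3d_inverse} are only bounded (1D) or decay like $r^{-1-k}$ (3D) rather than lying in $L^2$. Fortunately that paragraph is redundant: the ODE/Green's function construction you describe in your first and third paragraphs is the correct (and the reference's) route. Second, Proposition \ref{p:idx_stability} alone does not quite rule out a threshold resonance of $\bar\calL$: the zero count of the regular fundamental solution can remain unchanged at the exact parameter value where that solution becomes bounded (1D) or decaying (3D) at infinity, since the incipient extra zero is still ``at infinity'' at that instant. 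What actually closes the uniqueness step is the continuity in $\delta_0$ of the asymptotic connection coefficients of the regular fundamental solution ($C_1^{(k)}$ in the notation of Appendix \ref{s:idx_numerics} for 1D-type boundedness, $C_0^{(k)}$ for 3D decay): they are nonzero at $\delta_0=0$ by the unperturbed propositions, hence nonzero for $\delta_0$ sufficiently small, which forbids any homogeneous solution that is simultaneously admissible at the origin and at infinity. This is the same continuity that underlies Proposition \ref{p:idx_stability}, and you gesture at it when you say the asymptotic rates are unchanged, but it is the nondegeneracy of these coefficients, not the index count itself, that delivers invertibility.
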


\section{Results}
\label{s:results}

We are now in the position to present our results and prove Theorem
\ref{t:spec_prop_ortho} for each NLS problem.  From this, we can
conclude Theorems \ref{t:3dnls}, \ref{t:3dcqnls}, and \ref{t:1dnls}.
Throughout, we shall make use of the orthogonality conditions
\eqref{e:orhto}.  See Appendix \ref{s:numerics} for details of our
numerical methods.

\subsection{3D Supercritical NLS}
\label{s:3dnls}

\subsubsection{Indexes}
\label{s:3dnls_idx}
\begin{prop}
  \label{p:nls3d_idx}
  For $.75\leq\sigma\leq 1.5$,
  \begin{gather*}
    \ind(\calB_+^{(0)})= \ind(\calB_-^{(0)})= \ind (\calB_+^{(1)}) = 1\\
    \ind(\calB_+^{(2)}) = \ind(\calB_-^{(1)}) = 0
  \end{gather*}
\end{prop}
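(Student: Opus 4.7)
The plan is to reduce each index computation to a one-dimensional zero-counting problem by applying Proposition \ref{p:3d_index}, and then to verify the required root counts numerically across the range $\sigma \in [0.75, 1.5]$. For each $\sigma$, I would first compute the radial soliton $R(r;1)$ with $f(s)=s^\sigma$ by a standard shooting or boundary-value solver, and then assemble the five radial Schr\"odinger operators $\calL_+^{(0)}$, $\calL_-^{(0)}$, $\calL_+^{(1)}$, $\calL_-^{(1)}$, $\calL_+^{(2)}$ using the explicit formulas for $\calV_\pm$ recorded in \eqref{e:skewed_potentials} specialized to the power nonlinearity. The associated initial value problems of Proposition \ref{p:3d_index} are then standard linear second-order ODEs on the half line that can be integrated outward to a sufficiently large $r_{\max}$, chosen so that the potentials (which decay like $\calV_\pm \lesssim e^{-\kappa r}$ for some $\kappa>0$ tied to the exponential decay of $R$) are well below the prescribed numerical tolerance.

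Once the fundamental solutions $U^{(k)}$ are computed, I would count their positive roots and check that the counts match the claimed values: exactly one positive zero for $\calL_+^{(0)}$, $\calL_-^{(0)}$ and $\calL_+^{(1)}$, and no positive zero for $\calL_+^{(2)}$ and $\calL_-^{(1)}$. Proposition \ref{p:3d_index} then delivers the index values directly. The higher-harmonic bilinear forms $\calB_+^{(k)}$ for $k\geq 3$ and $\calB_-^{(k)}$ for $k\geq 2$ need not be considered separately because Corollary \ref{c:mono_index} forces their indexes to vanish as well, controlling the full decomposition \eqref{e:3d_decomp}.

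To promote a finite set of pointwise numerical checks to the statement of the proposition over the whole interval $\sigma\in[0.75,1.5]$, I would use that $R(\cdot;1)$ depends continuously on $\sigma$, so the potentials $\calV_\pm$ and the fundamental solutions $U^{(k)}(r;\sigma)$ do too; the zeros of $U^{(k)}$ can only change in number at parameter values where a zero emerges from $r=0$, escapes to $r=\infty$, or a double zero is created. Sampling $\sigma$ on a grid fine enough that at each grid point the computed $U^{(k)}$ is bounded well away from zero except near isolated simple roots (and with a definite, consistent sign as $r\to r_{\max}$), none of these degenerations can occur between samples, and the root counts are forced to be constant on $[0.75,1.5]$. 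Proposition \ref{p:idx_stability} absorbs small numerical perturbations of the potential into the same index. The main obstacle is precisely the rigorous certification of these numerics: controlling the error in the soliton profile, the integration error in $U^{(k)}$, and ensuring that no computed near-zero could, under the true operator, flip a root count. In practice this is handled by showing that the computed $U^{(k)}$ attain values well separated from zero between identified roots and at $r_{\max}$, with a margin comfortably exceeding the numerical tolerance uniformly over the sampling grid.
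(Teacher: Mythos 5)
Your proposal follows essentially the same route as the paper: compute the radial index functions $U^{(k)}$ numerically at sampled values of $\sigma$, count their positive roots, invoke Proposition \ref{p:3d_index} to convert root counts to indexes, use Corollary \ref{c:mono_index} for the higher harmonics, and extend from the finite sample to the whole interval by continuity (the paper uses six sample points and checks the far-field asymptotics $C_0^{(k)} r^k + C_1^{(k)} r^{2-d-k}$ in the appendix to rule out distant roots, just as you suggest). Your discussion of how root counts could change between samples is in fact more explicit than the paper's one-line continuity appeal.
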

\begin{proof}
  Examining Figures \ref{f:nls3d_idx}, we see that for six computed
  values of $.75\leq\sigma\leq 1.5$, inclusive, we have the
  corresponding number of zero crossings and apply Proposition
  \ref{p:3d_index}.  We argue by continuity that this should hold at
  all points in the interval.  These were computed using the approach
  described in Appendix \ref{s:idx_numerics}.
\end{proof}
Corollary \ref{c:mono_index} ensures positivity of the higher harmonic
forms.

\begin{figure}
  \subfigure[Harmonic
  $k=0$]{\includegraphics[width=2.4in]{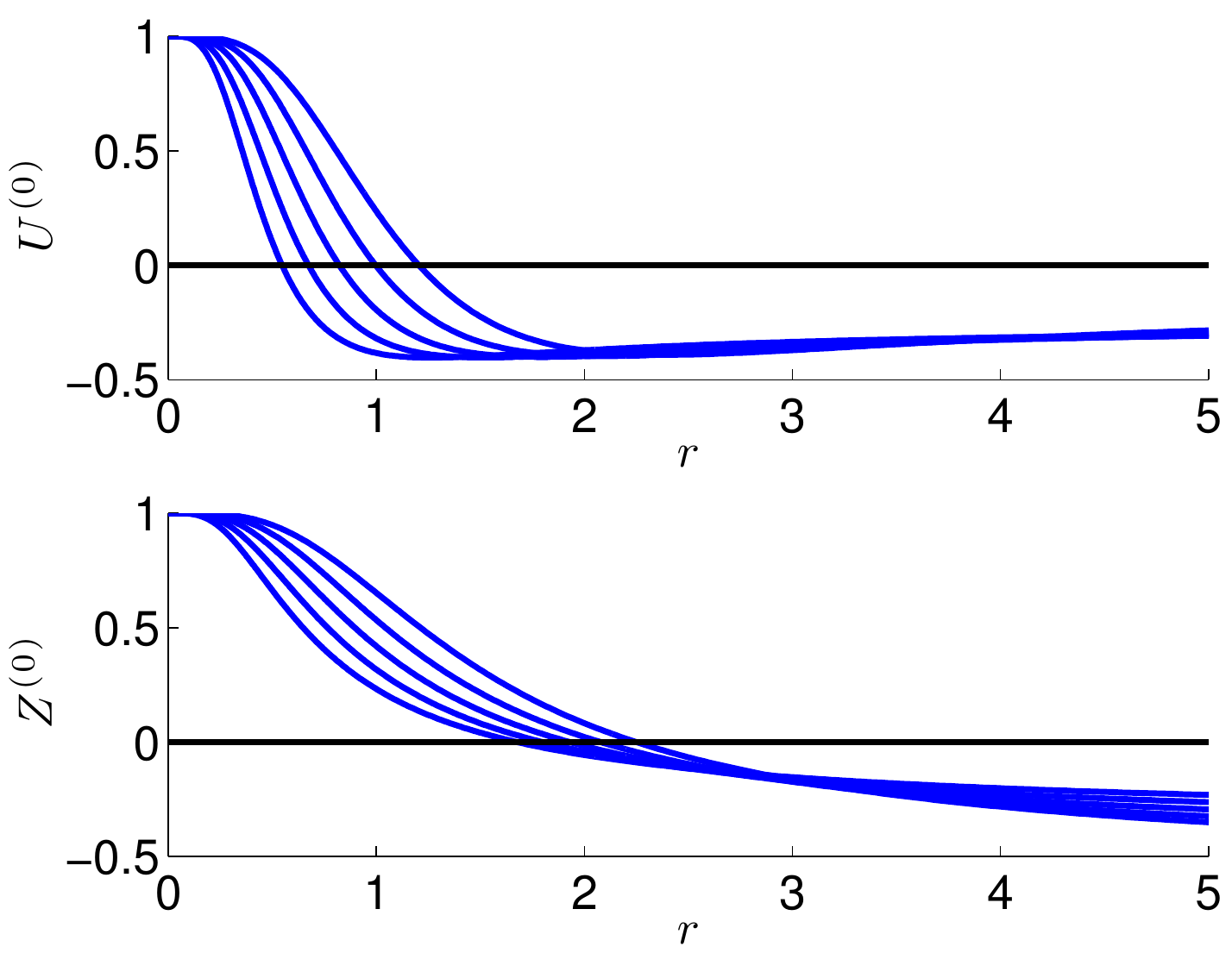}}
  \subfigure[Harmonic
  $k=1$]{\includegraphics[width=2.4in]{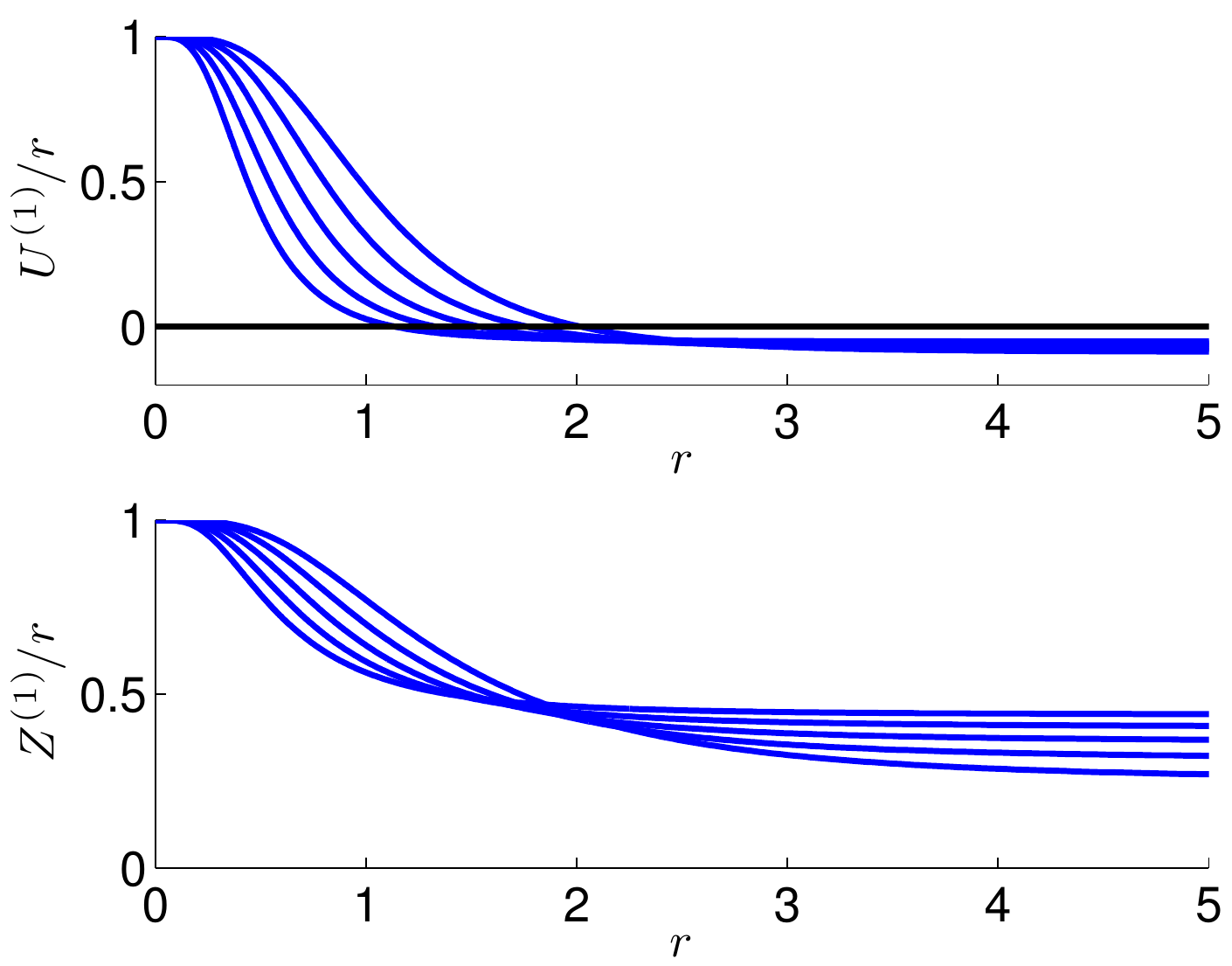}}

  \subfigure[Harmonic
  $k=2$]{\includegraphics[width=2.4in]{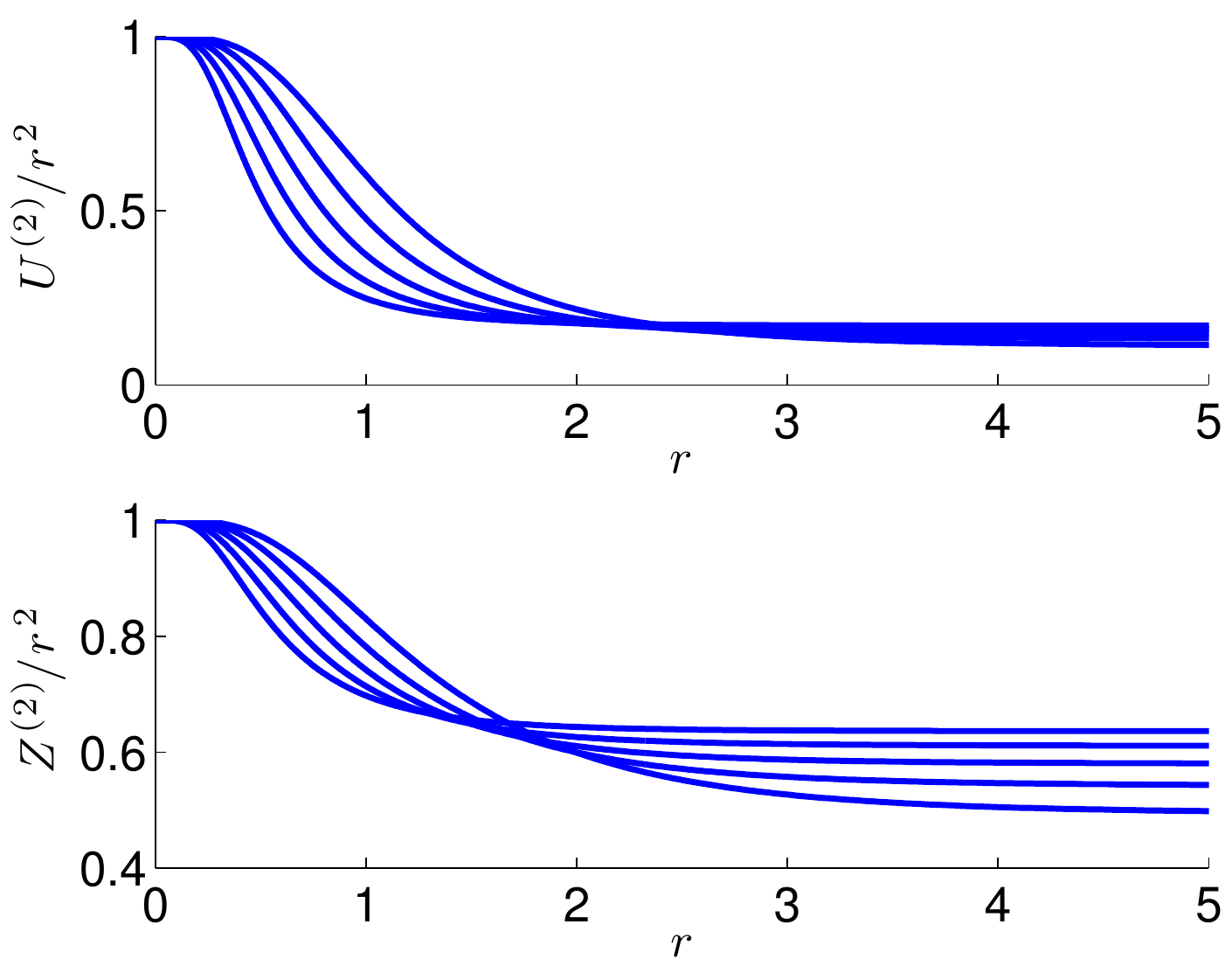}}
  \caption{Index functions for the 3D supercritical NLS equation
    computed at six values of $.75\leq\sigma\leq 1.5$, inclusive.}
  \label{f:nls3d_idx}
\end{figure}

\subsubsection{Inner Products}

\begin{prop}
  \label{p:nls3d_k0}
  Let $U_1^{(0)}$ and $U_2^{(0)}$, solve
  \begin{subequations}
    \begin{align}
      \calL_+^{(0)} U_1^{(0)} = R \\
      \calL_+^{(0)} U_2^{(0)} = \phi_2
    \end{align}
  \end{subequations}
  And define:
  \begin{subequations}
    \label{e:nls3d_k0_defs}
    \begin{align}
      K_1^{(0)} &\equiv  \inner{\calL_+^{(0)} U_1^{(0)}}{U_1^{(0)}},\\
      K_2^{(0)} &\equiv  \inner{\calL_+^{(0)} U_2^{(0)}}{U_2^{(0)}}, \\
      K_3^{(0)} &\equiv \inner{\calL_+^{(0)} U_1^{(0)}}{U_2^{(0)}}.
    \end{align}
  \end{subequations}
  The $K_j^{(0)}$ have the values indicated in Figure
  \ref{f:nls3d_k_0_var}.  Moreover, for $.8 \leq\sigma \leq 1.2$,
  \begin{equation}
    \label{e:nls3d_k0_prod2_def}
    (K_1^{(0)}  K_2^{(0)} - (K_3^{(0)})^2)/K_2^{(0)}<0
  \end{equation}
  as pictured in Figure \ref{f:nls3d_kprod2_0_var}.

\end{prop}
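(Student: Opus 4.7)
The plan splits into an analytic preliminary and a computer-assisted verification. First, to ensure that $U_1^{(0)}$ and $U_2^{(0)}$ are well-defined, I would verify that both right-hand sides satisfy the hypotheses of Proposition \ref{p:3d_inverse}. The soliton $R=R(\cdot;1)$ is smooth, radially symmetric, and exponentially decaying by construction. The component $\phi_2$ is extracted from the radial eigenstate $\boldsymbol{\phi}$ at the off-axis eigenvalue $\mu_\star\in\R\setminus\{0\}$; since $\mu_\star$ lies strictly off the essential spectrum, $\phi_2$ is smooth, radial, and exponentially localized. Combined with $\ind(\calB_+^{(0)})=1$ from Proposition \ref{p:nls3d_idx}, Proposition \ref{p:3d_inverse} applied in harmonic $k=0$ delivers unique radially symmetric $U_1^{(0)}$ and $U_2^{(0)}$ with the stated regularity and decay at infinity.

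Next, self-adjointness of $\calL_+^{(0)}$ on its natural radial domain recasts the three inner products in the simpler form
\begin{equation*}
K_1^{(0)} = \inner{R}{U_1^{(0)}},\qquad K_2^{(0)} = \inner{\phi_2}{U_2^{(0)}},\qquad K_3^{(0)} = \inner{R}{U_2^{(0)}} = \inner{\phi_2}{U_1^{(0)}}.
\end{equation*}
I would solve the two radial boundary value problems numerically on a sufficiently large truncated interval using the discretization described in Appendix \ref{s:numerics}, then evaluate the inner products by quadrature to produce the curves recorded in Figure \ref{f:nls3d_k_0_var}. For the strict inequality \eqref{e:nls3d_k0_prod2_def}, I would tabulate the ratio $(K_1^{(0)}K_2^{(0)}-(K_3^{(0)})^2)/K_2^{(0)}$ on a dense grid in $\sigma\in[0.8,1.2]$ and observe from Figure \ref{f:nls3d_kprod2_0_var} that it is strictly negative and uniformly bounded away from zero on the sample. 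Joint continuity in $\sigma$ of the soliton $R$, of the eigenpair $(\mu_\star,\boldsymbol{\phi})$, and of the inverses of $\calL_+^{(0)}$ on its radial Fredholm complement then propagates the negative sign from the sampled $\sigma$ values to the full interval.

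The principal obstacle is quantitative rigor. None of the $K_j^{(0)}$ admits a closed form, so the conclusion rests on the numerically observed margin above zero dominating the combined discretization, solver, and sampling-interpolation errors; in particular the grid spacing in $\sigma$ must be matched against a Lipschitz modulus for the inner products inherited from continuous dependence of the ingredients. This is the computer-assisted character of the proposition, and is precisely what the source code referenced in the abstract is designed to certify.
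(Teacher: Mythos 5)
Your proposal matches the paper's proof: existence and uniqueness of $U_1^{(0)}$ and $U_2^{(0)}$ via Proposition \ref{p:3d_inverse}, numerical solution of the boundary value problems and evaluation of the inner products as in Appendix \ref{s:numerics}, sampling of $\sigma\in[0.8,1.2]$ (the paper uses 41 uniformly spaced values), and a continuity argument to fill in the interval. Your added remarks on the exponential localization of $\phi_2$, the role of the index in uniqueness, and the unresolved gap between sampled values and a fully rigorous bound are all consistent with, and slightly more explicit than, what the paper states.
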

\begin{proof}
  Proposition \ref{p:3d_inverse} ensures that the solutions exist.
  The boundary value problems and inner products were computed using
  the techniques described in Appendix \ref{s:numerics}.  The
  \eqref{e:nls3d_k0_defs} and \eqref{e:nls3d_k0_prod2_def} were then
  computed at 41 uniformly spaced values of $\sigma$ between $.8$ and
  $1.2$.
\end{proof}

\begin{figure}
  \subfigure[]{\includegraphics[width=2.4in]{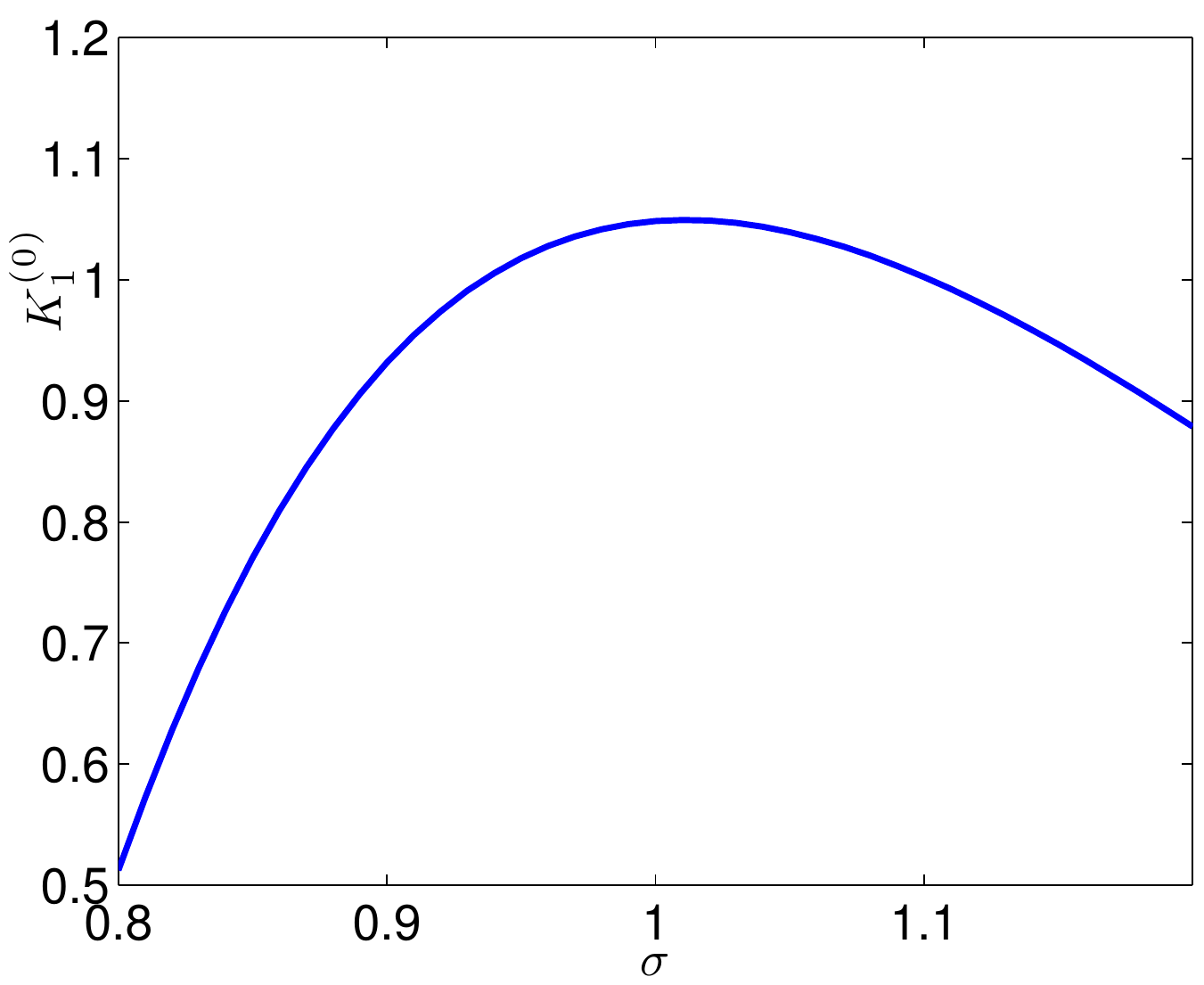}}
  \subfigure[]{\includegraphics[width=2.4in]{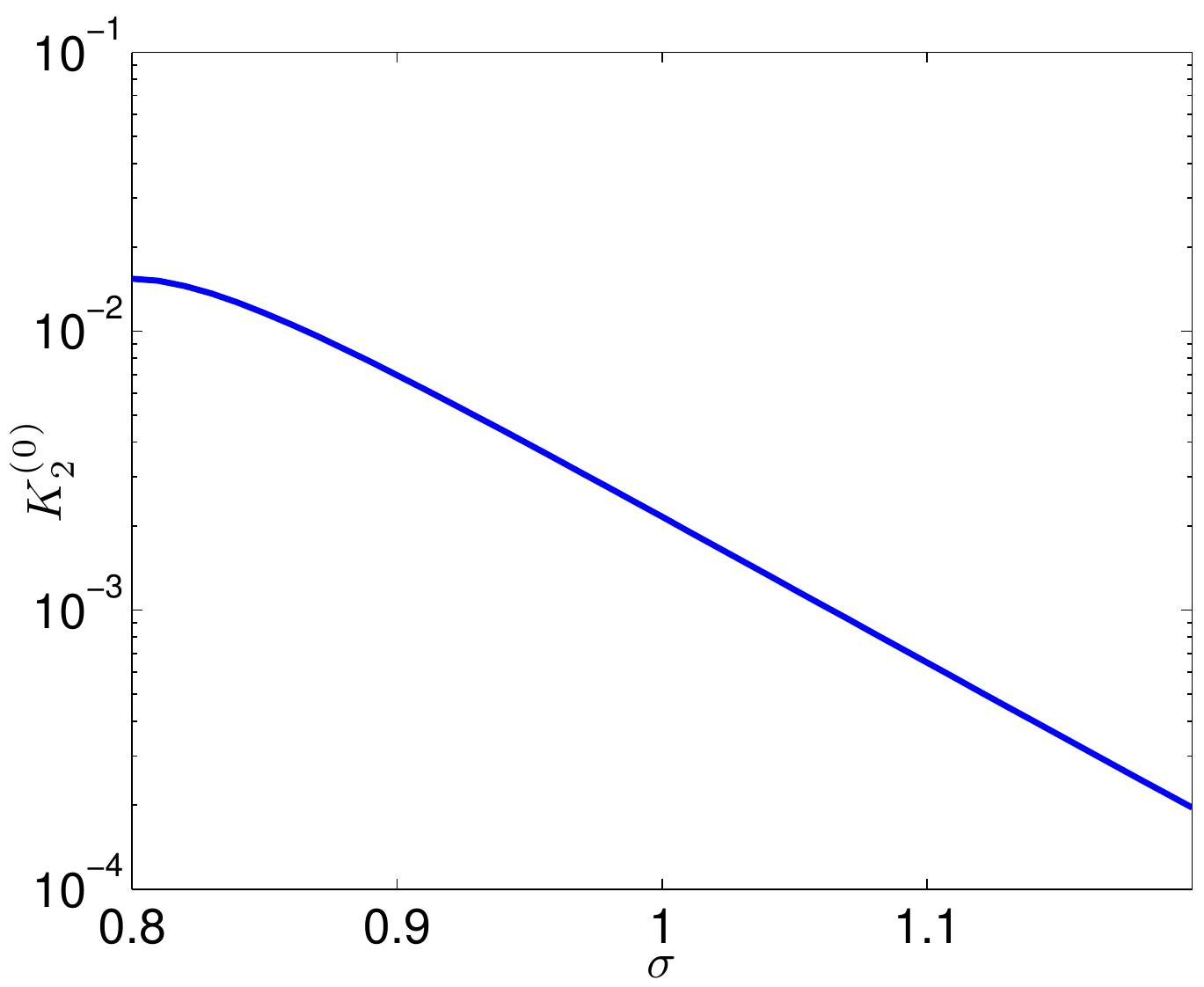}}
  \subfigure[]{\includegraphics[width=2.4in]{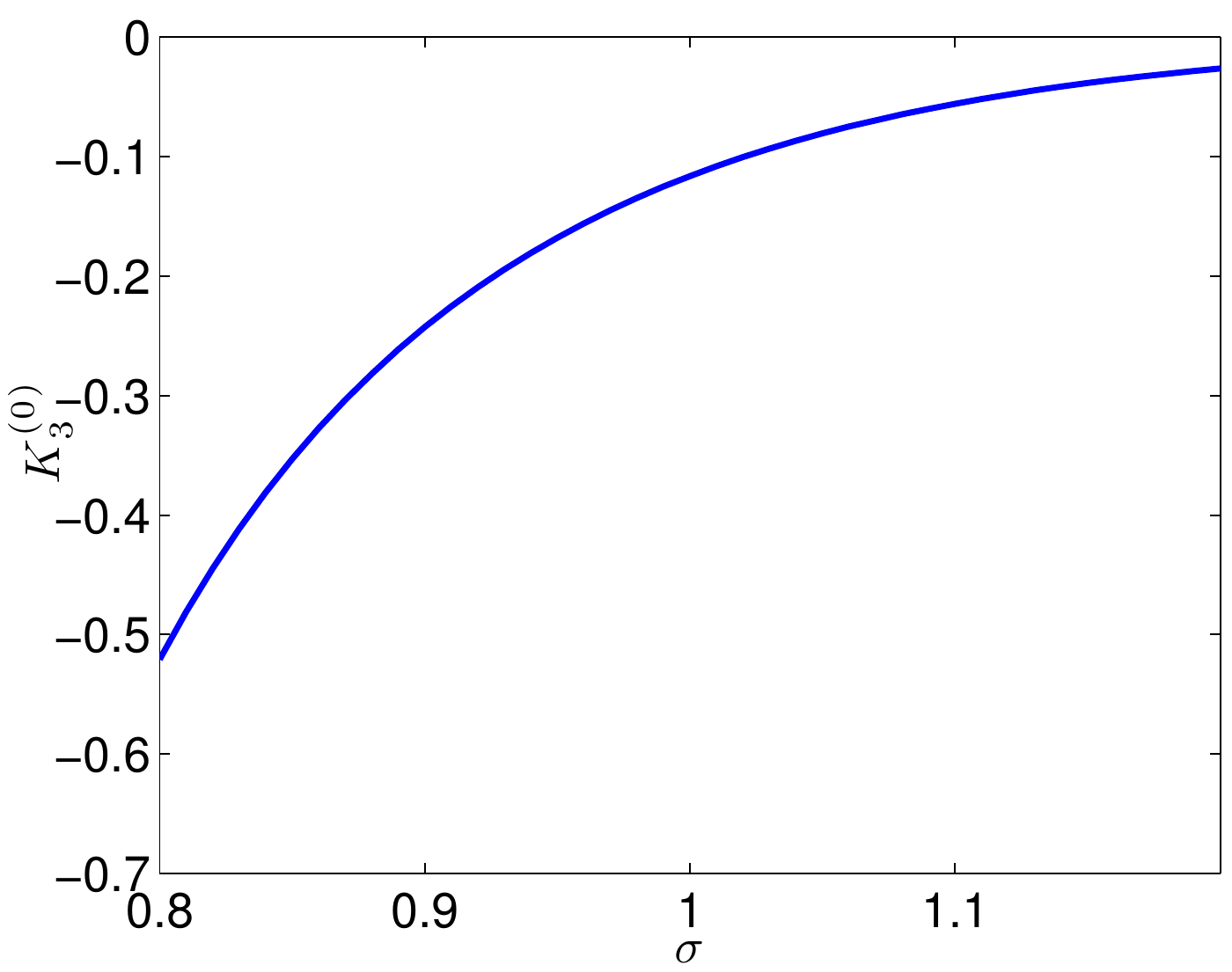}}
  \caption{$K_j^{(0)}$ as functions of $\sigma$ for 3D supercritical
    NLS.}
  \label{f:nls3d_k_0_var}
\end{figure}

\begin{figure}
  \includegraphics[width=2.4in]{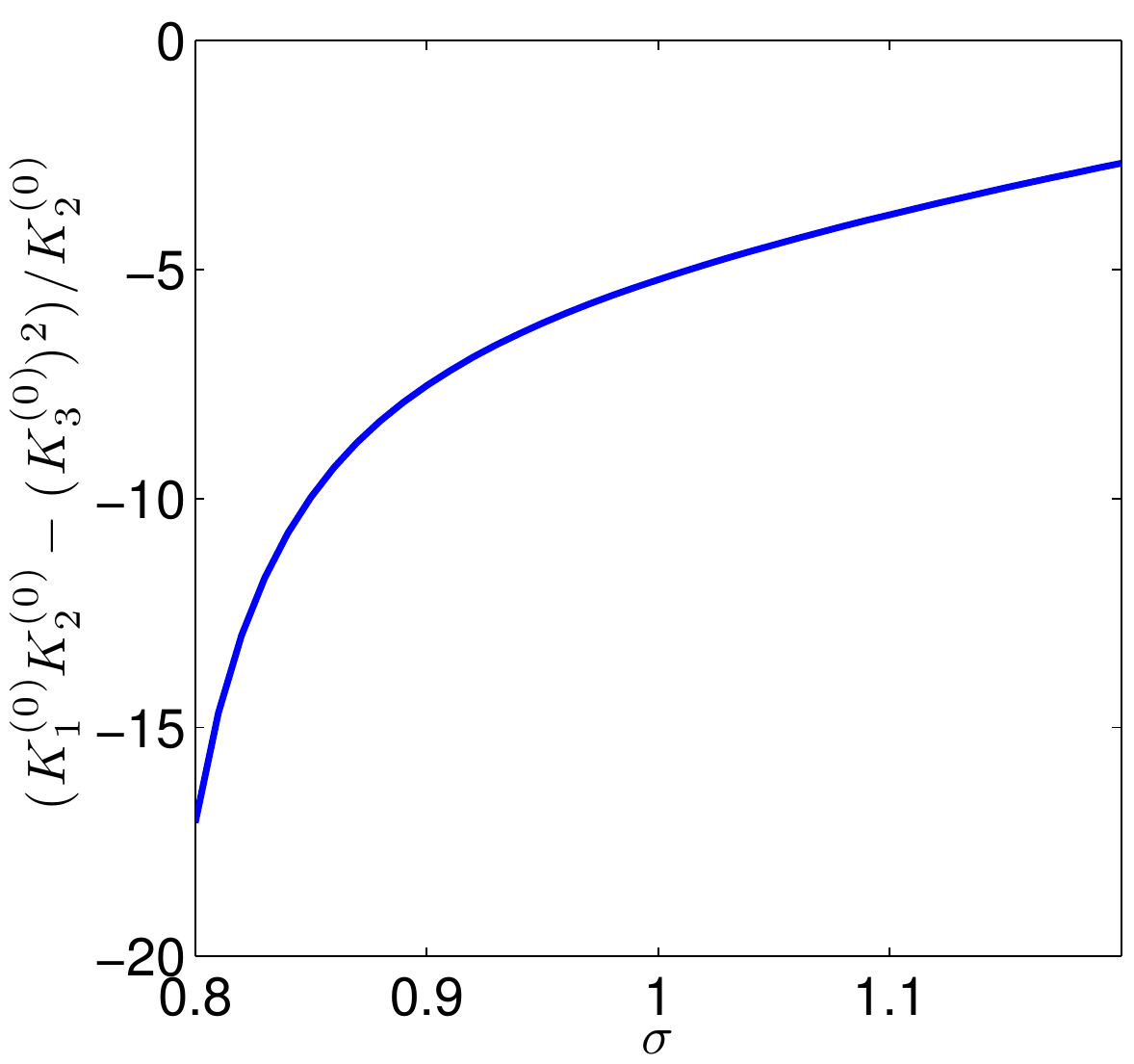}
  \caption{ \eqref{e:nls3d_k0_prod2_def} as a function of $\sigma$ for
    3D supercritical NLS.}
  \label{f:nls3d_kprod2_0_var}
\end{figure}

\begin{prop}
  \label{p:nls3d_j0}
  Let $Z_1^{(0)}$ and $Z_2^{(0)}$, solve
  \begin{subequations}
    \begin{align}
      \calL_-^{(0)} Z_1^{(0)} = \tfrac{1}{\sigma}R + r R' \\
      \calL_-^{(0)} Z_2^{(0)} = \phi_1
    \end{align}
  \end{subequations}
  and define:
  \begin{subequations}
    \label{e:nls3d_j0_defs}
    \begin{align}
      J_1^{(0)} &\equiv  \inner{\calL_-^{(0)} Z_1^{(0)}}{Z_1^{(0)}},\\
      J_2^{(0)} &\equiv  \inner{\calL_-^{(0)} Z_2^{(0)}}{Z_2^{(0)}}, \\
      J_3^{(0)} &\equiv \inner{\calL_-^{(0)} Z_1^{(0)}}{Z_2^{(0)}}.
    \end{align}
  \end{subequations}
  The $J_j^{(0)}$ have the values indicated in Figure
  \ref{f:nls3d_j_0_var}.  Moreover there exist $\sigma_1>.8$ and
  $\sigma_2>.8$ such that
  \begin{gather}
    J_1^{(0)} <0,\quad \text{for $\sigma_1 < \sigma< 1.2$}\\
    \label{e:nls3d_j0_prod2_def}
    (J_1^{(0)} J_2^{(0)} - (J_3^{(0)})^2)/J_2^{(0)}<0,\quad \text{for
      $\sigma_2 < \sigma< 1.2$}
  \end{gather}
  as pictured in Figure \ref{f:nls3d_jprod2_0_var}, where
  \begin{subequations}
    \begin{align}
      \label{e:nls3d_sigma1}
      \sigma_1 &= 0.807699 \\
      \label{e:nls3d_sigma2}
      \sigma_2 &= 0.807425
    \end{align}
  \end{subequations}

\end{prop}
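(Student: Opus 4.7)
The plan is to mirror the proof of Proposition \ref{p:nls3d_k0} step for step, with only the additional work of pinning down the two threshold values $\sigma_1$ and $\sigma_2$. First, I would verify that the hypotheses of Proposition \ref{p:3d_inverse} are satisfied by the two right-hand sides, so that $Z_1^{(0)}$ and $Z_2^{(0)}$ are well defined. The function $R = R(\cdot;1)$ is the ground state soliton and decays like $e^{-r}$; its derivative $R'$ inherits exponential decay from standard elliptic regularity, so $\frac{1}{\sigma}R + r R'$ (which by \eqref{e:analytic_deriv} is just $2\partial_\omega R$ at $\omega=1$) is smooth, radial, and exponentially decaying. The state $\phi_1$, being the first component of the eigenstate $\boldsymbol{\phi}$ of $JL$ at the real eigenvalue $\mu_\star$, is radial and also decays exponentially by standard Agmon-type estimates for Schr\"odinger operators with the eigenvalue off the essential spectrum. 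Together with the index data from Proposition \ref{p:nls3d_idx} (which ensures $\calL_-^{(0)}$ has no zero-energy bound state in the relevant range), Proposition \ref{p:3d_inverse} then produces unique solutions $Z_1^{(0)}, Z_2^{(0)}$ of the desired regularity class.

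Once existence is in hand, the actual values of $J_1^{(0)}, J_2^{(0)}, J_3^{(0)}$ are computed numerically at the same 41 uniformly spaced nodes in $[0.8, 1.2]$ used for the $K_j^{(0)}$, via the boundary-value-problem solver described in Appendix \ref{s:numerics}. I would tabulate $J_1^{(0)}(\sigma)$ and the combination $(J_1^{(0)} J_2^{(0)} - (J_3^{(0)})^2)/J_2^{(0)}(\sigma)$ and plot them as Figures \ref{f:nls3d_j_0_var} and \ref{f:nls3d_jprod2_0_var}. Unlike the $K_j^{(0)}$, where the relevant combination was negative on the entire interval, here one observes a sign change within $[0.8, 1.2]$, so one must localize the zero crossings. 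By a continuity/bisection argument applied to the numerical output, one refines $\sigma_1$ as the unique zero of $J_1^{(0)}$ and $\sigma_2$ as the unique zero of the numerator $J_1^{(0)} J_2^{(0)} - (J_3^{(0)})^2$ (using that $J_2^{(0)}$ does not vanish in $[0.8, 1.2]$, which is also visible from Figure \ref{f:nls3d_j_0_var}), producing the numerical estimates \eqref{e:nls3d_sigma1} and \eqref{e:nls3d_sigma2}.

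The main obstacle is therefore not conceptual but quantitative: the two threshold values $\sigma_1 = 0.807699$ and $\sigma_2 = 0.807425$ are very close together and both sit near the left edge of the interval under study, so the sign of the quantities in question flips in a narrow window. To make the continuity argument trustworthy one needs enough grid resolution and a sufficiently accurate BVP solve near those values, and one must check that Proposition \ref{p:idx_stability} (or the analogous $\calL$-inversion stability from Proposition \ref{p:inverse_stability}) absorbs the numerical error. Otherwise, the argument is a direct analogue of Proposition \ref{p:nls3d_k0}, with no new algebraic structure beyond identifying the correct $\sigma$-thresholds at which the inner-product combinations change sign.
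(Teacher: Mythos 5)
Your proposal matches the paper's argument: the authors likewise invoke Proposition \ref{p:3d_inverse} for existence of $Z_1^{(0)}, Z_2^{(0)}$, compute the $J_j^{(0)}$ numerically at 41 uniformly spaced values of $\sigma$ in $[0.8,1.2]$, and then locate the sign changes $\sigma_1, \sigma_2$ with the root-finding procedure of Appendix \ref{s:roots} (a wrapper around {\tt fzero}, which is the same continuity/bisection idea you describe). Your additional remarks on the exponential decay of the right-hand sides and the sensitivity of the computation near the thresholds are consistent with, and slightly more explicit than, what the paper records.
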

\begin{proof}
  Like in the proof of Proposition \ref{p:nls3d_k0}, we numerically
  solve the boundary value problems at 41 uniformly spaced values of
  $\sigma$ between $.8$ and $1.2$.  Using the root finding approach,
  discussed in Appendix \ref{s:roots}, we found the zero crossings,
  $\sigma_1$ and $\sigma_2$, of $J_1^{(0)}$ and
  \eqref{e:nls3d_jprod2_0_var} to be at the indicated values.
\end{proof}

\begin{figure}
  \subfigure[]{\includegraphics[width=2.4in]{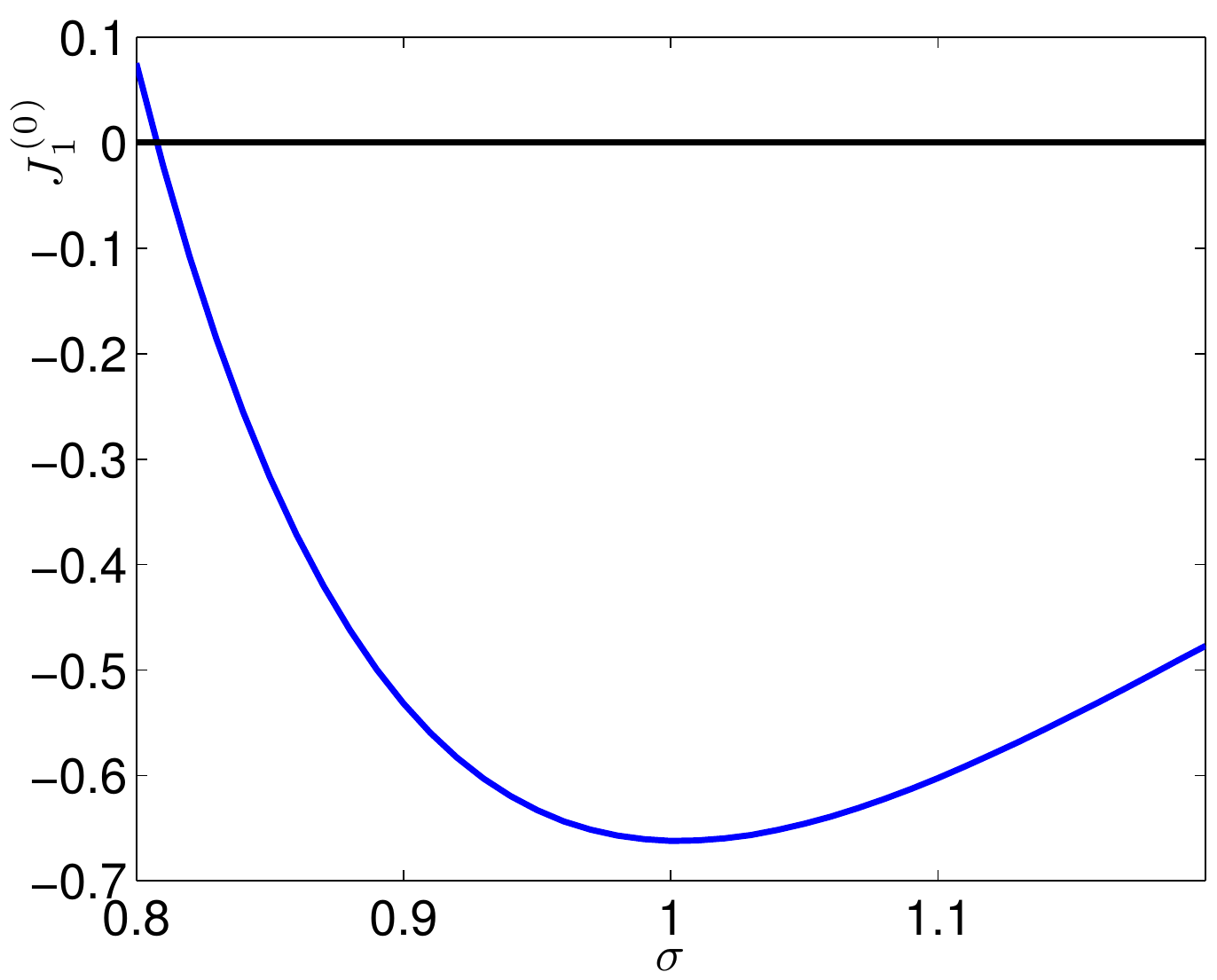}}
  \subfigure[]{\includegraphics[width=2.4in]{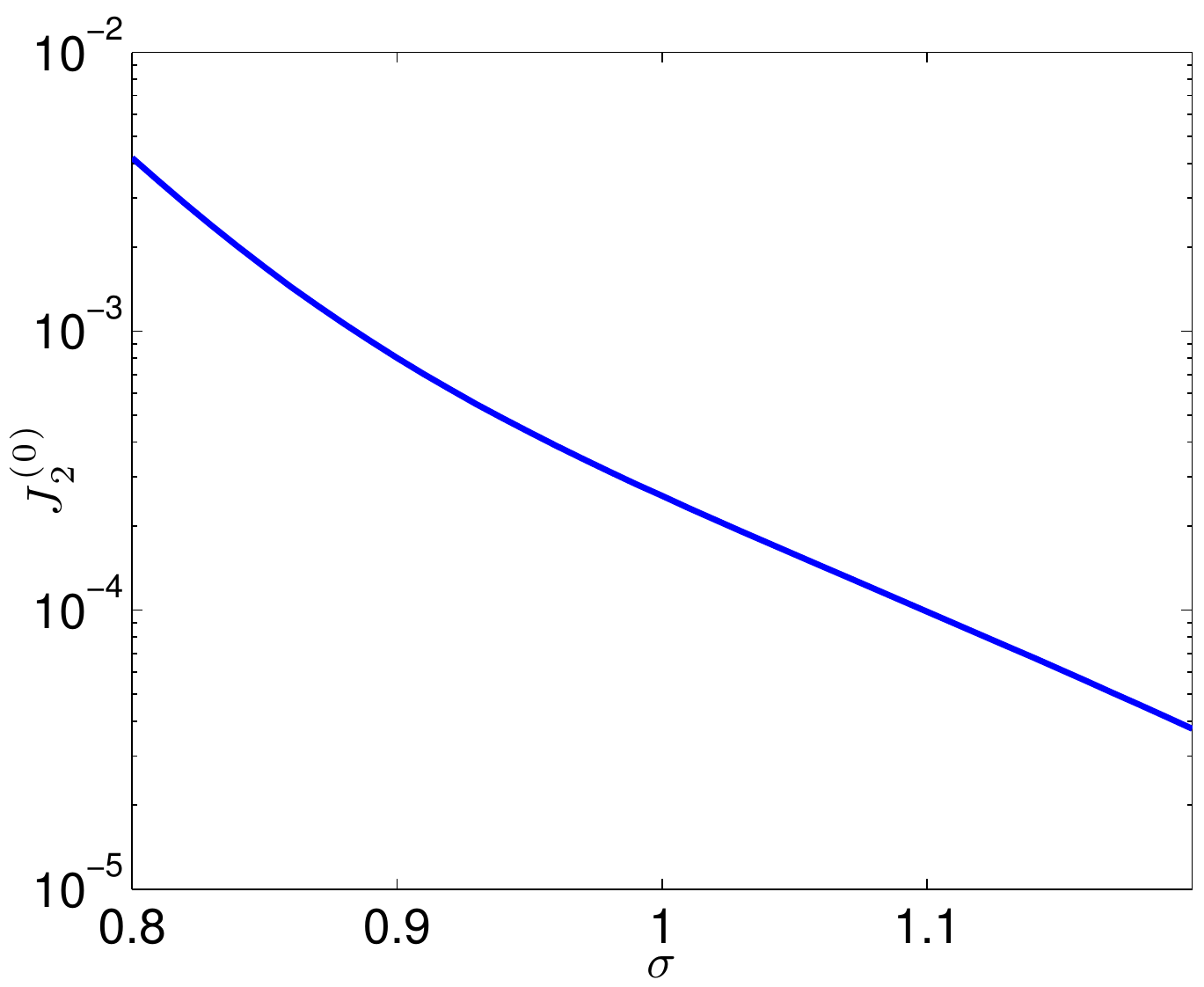}}
  \subfigure[]{\includegraphics[width=2.4in]{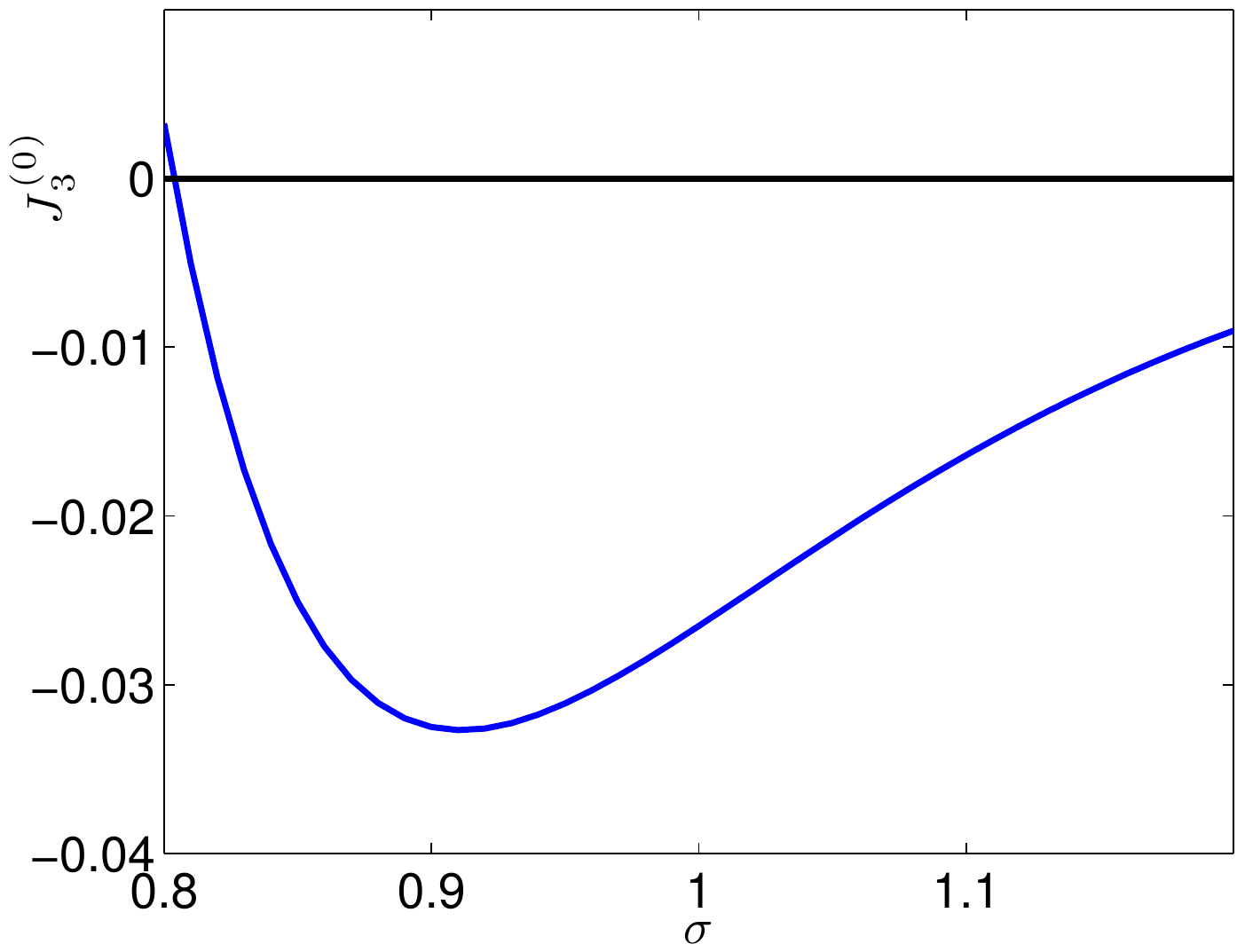}}
  \caption{$J_j^{(0)}$ as functions of $\sigma$ for 3D supercritical
    NLS.}
  \label{f:nls3d_j_0_var}
\end{figure}

\begin{figure}
  \includegraphics[width=2.4in]{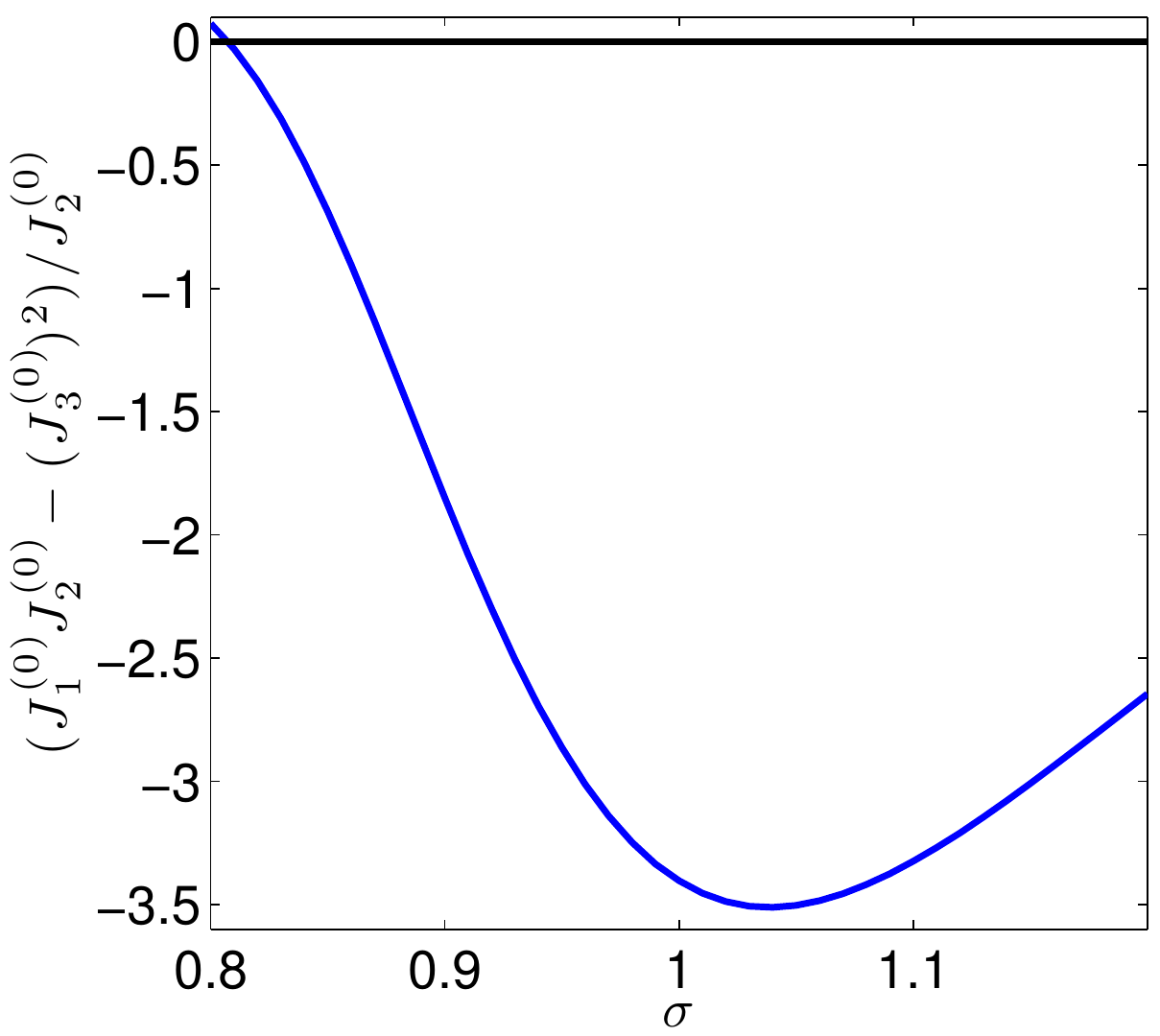}
  \caption{\eqref{e:nls3d_j0_prod2_def} as a function of $\sigma$ for
    3D supercritical NLS.}
  \label{f:nls3d_jprod2_0_var}
\end{figure}

\begin{prop}
  \label{p:nls3d_k1}
  Let $U_1^{(1)}$ solve
  \begin{equation}
    \calL_+^{(1)} U_1^{(1)} = r R, \quad U_1^{(1)} \in L^\infty
  \end{equation}
  Define
  \begin{equation}
    \label{e:k1_defs}
    K_1^{(1)} \equiv  \inner{\calL_+^{(0)} U_1^{(1)} }{U_1^{(1)} } 
  \end{equation}
  There exists $\sigma_3>.8$ such that
  \begin{equation*}
    K_1^{(1)}<0,\quad \text{for $.8 \leq \sigma< \sigma_3$}
  \end{equation*}
  as pictured in Figure \ref{f:nls3d_k1_1_var}, where
  \begin{equation}
    \label{e:nls3d_sigma3}
    \sigma_3 = 1.12092
  \end{equation}
\end{prop}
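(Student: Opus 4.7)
The plan is to follow the same template used in the proofs of Propositions \ref{p:nls3d_k0} and \ref{p:nls3d_j0}: first establish existence and regularity of $U_1^{(1)}$ rigorously, then compute $K_1^{(1)}$ numerically at a grid of $\sigma$ values, and finally locate the zero crossing $\sigma_3$ via root finding. Existence of a unique $U_1^{(1)} \in L^\infty$ solving $\calL_+^{(1)} U_1^{(1)} = rR$ follows from Proposition \ref{p:3d_inverse}, since $rR$ is smooth, radially symmetric, and decays exponentially (as $R$ does), and since Proposition \ref{p:nls3d_idx} established $\ind(\calB_+^{(1)}) = 1$ on $[0.75, 1.5]$, allowing inversion of $\calL_+^{(1)}$ on the appropriate subspace. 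In fact Proposition \ref{p:3d_inverse} yields the stronger decay $(1+r^2) U_1^{(1)} \in L^\infty$ and $U_1^{(1)} = O(r)$ near the origin.

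Once $U_1^{(1)}$ is in hand, the quantity $K_1^{(1)} = \inner{\calL_+^{(0)} U_1^{(1)}}{U_1^{(1)}}$ is to be evaluated numerically. Note the mixing of harmonics: $\calL_+^{(0)}$ differs from $\calL_+^{(1)}$ by the centrifugal term, so
\begin{equation*}
K_1^{(1)} = \inner{\calL_+^{(1)} U_1^{(1)}}{U_1^{(1)}} - \inner{r^{-2} U_1^{(1)}}{U_1^{(1)}} = \inner{rR}{U_1^{(1)}} - \inner{r^{-2} U_1^{(1)}}{U_1^{(1)}}.
\end{equation*}
This representation avoids re-applying a numerical differential operator to the computed $U_1^{(1)}$, and the $r^{-2}$ integral is integrable at the origin thanks to the $U_1^{(1)} = O(r)$ behavior; at infinity $r^{-2} (U_1^{(1)})^2 = O(r^{-6})$ so the tail is harmless. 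I would then tabulate $K_1^{(1)}$ at $41$ uniformly spaced values of $\sigma \in [0.8, 1.2]$ using the discretization scheme of Appendix \ref{s:numerics}, producing the graph underlying Figure \ref{f:nls3d_k1_1_var}.

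To identify $\sigma_3$, I would apply the Appendix \ref{s:roots} root-finding routine to the pair of consecutive sample points where $K_1^{(1)}$ changes sign, yielding $\sigma_3 \approx 1.12092$. Negativity of $K_1^{(1)}$ on $[0.8, \sigma_3)$ then follows from the tabulated values together with the smooth dependence of $R$, and hence of $\calL_+^{(0)}$, $\calL_+^{(1)}$, and $U_1^{(1)}$, on $\sigma$. The main obstacle is the same one that afflicts the companion propositions: the argument is fundamentally computer-assisted, and its validity rests on (i) the accuracy of the BVP solver, (ii) confidence that $K_1^{(1)}$ has no second, unresolved zero in $[0.8, \sigma_3)$ hidden between samples, and (iii) the applicability of Proposition \ref{p:idx_stability}, which stabilizes the index computation under the small perturbations introduced by discretization. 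The sign argument for $K_1^{(1)}$ near $\sigma_3$ gives no analytic information and is what ultimately sets the upper endpoint $\sigma < 1.12092$ appearing in Theorem \ref{t:3dnls}.
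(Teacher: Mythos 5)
Your proposal takes essentially the same approach as the paper: the paper's entire proof of this proposition is the sentence ``This is solved in the same manner as Propositions \ref{p:nls3d_k0} and \ref{p:nls3d_j0},'' i.e.\ invertibility from Proposition \ref{p:3d_inverse}, numerical solution of the boundary value problem on a grid of $\sigma$ values, and the {\tt fzero}-based root finding of Appendix \ref{s:roots} to locate $\sigma_3$ --- exactly your plan.

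One caveat on your ``mixing of harmonics'' representation: the superscript $(0)$ in \eqref{e:k1_defs} is almost certainly a misprint for $(1)$. In the proof of the spectral property, $K_1^{(1)}$ is used as the value of $\calB_+^{(1)}(U_1^{(1)},U_1^{(1)})=\inner{\calL_+^{(1)}U_1^{(1)}}{U_1^{(1)}}=\inner{rR}{U_1^{(1)}}$, with no centrifugal correction; the parallel CQNLS proposition repeats the same typo. Your literal reading computes $\inner{rR}{U_1^{(1)}}-\inner{r^{-2}U_1^{(1)}}{U_1^{(1)}}$, which is strictly smaller than the intended quantity (the subtracted term is positive), so its negativity does not imply negativity of $\calB_+^{(1)}(U_1^{(1)},U_1^{(1)})$ and would yield a different, spurious $\sigma_3$. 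You should compute $\inner{rR}{U_1^{(1)}}$ itself; otherwise the rest of your argument is the paper's.
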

\begin{proof}
  This is solved in the same manner as Propositions \ref{p:nls3d_k0}
  and \ref{p:nls3d_j0}.
\end{proof}

\begin{figure}
  \includegraphics[width=2.4in]{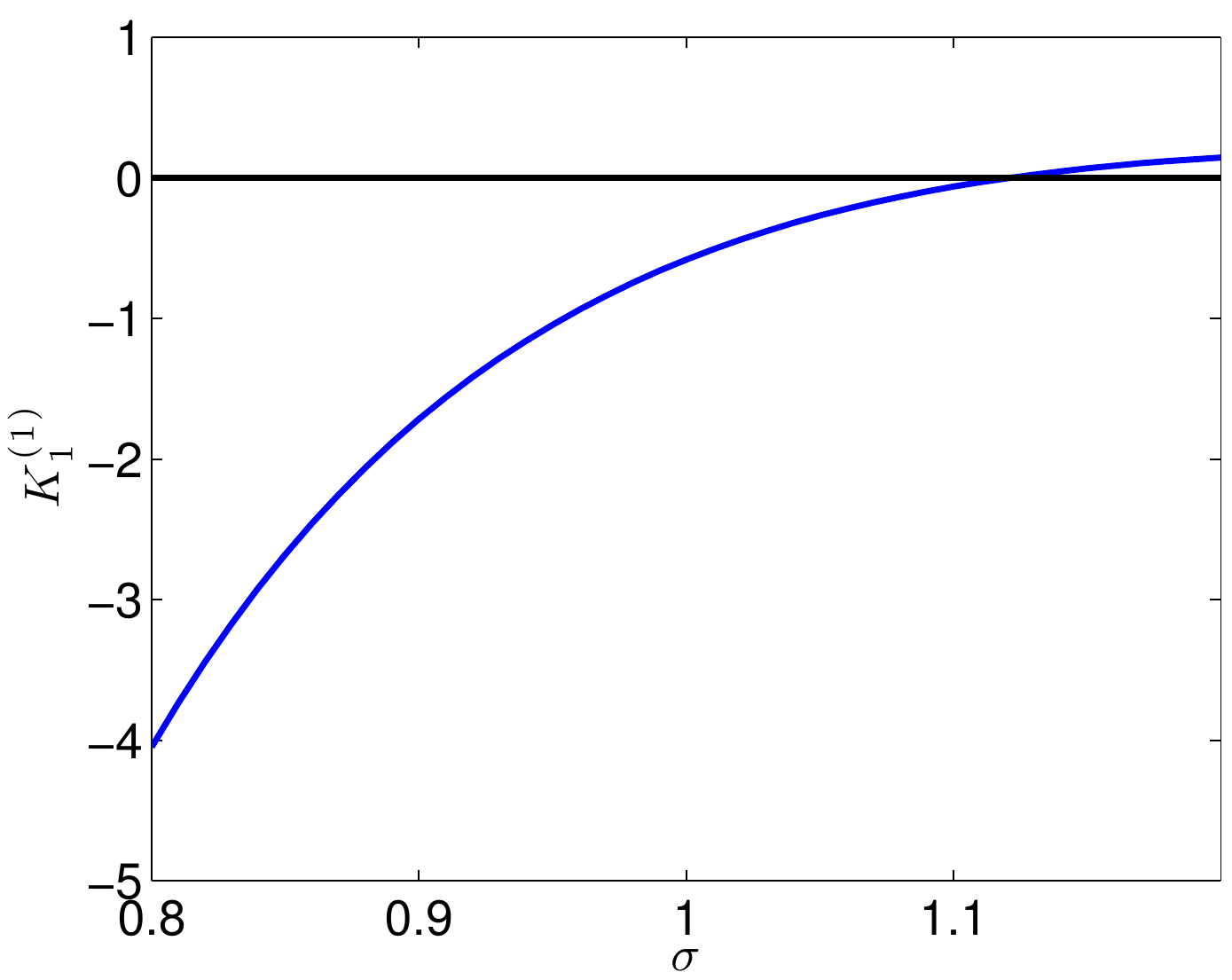}
  \caption{$K_1^{(1)}$ as a function of $\sigma$ for 3D supercritical
    NLS.}
  \label{f:nls3d_k1_1_var}
\end{figure}

\subsubsection{Proof of the Spectral Property}
\label{s:nls3d_proof}
Subject to the acceptance of these computations, we are now in the
position to prove the spectral property for
\[
\sigma_2<\sigma< \sigma_3
\]
where $\sigma_2$ and $\sigma_3$ are as defined by
\eqref{e:nls3d_sigma1} and \eqref{e:nls3d_sigma3}.  We first shall
show that the bilinear form on each harmonic,
$\bar{\mathcal{B}}_\pm^{(k)}$, is positive.  We first prove the case
of $\bar{\mathcal{B}}_+^{(0)}$.


Fixing $\sigma$, for a sufficiently small $\delta_0$, in the sense of
Propositions \ref{p:idx_stability} and \ref{p:inverse_stability},
\[
\frac{1}{\bar K_2^{(0)}}\paren{\bar K_1^{(0)}\bar K_2^{(0)} - (\bar
  K_3^{(0)})^2}<0.
\]
where $\bar K_j^{(0)}$ is the inner product associated with the
perturbed operator, $\bar \calL_+^{(0)}$.

Given that $f$ is orthogonal to $R$ and $\phi_2$, it is orthogonal to
any linear combination, including
\[
\bar q = R - \frac{\bar K_3^{(0)}}{\bar K_2^{(0)}}\phi_2
\]
Let $\bar Q$ solve
\begin{equation}
  \bar{\mathcal{L}}_+^{(0)} \bar{Q} = \bar q.
\end{equation}
By construction,
\begin{equation}
  \label{e:nls3d_spec_proof_BQQ_sign}
  \bar{\mathcal{B}}_+^{(0)}(\bar{Q} , \bar{Q} ) = \frac{1}{\bar
    K_2^{(0)}}\paren{\bar K_1^{(0)}\bar K_2^{(0)} - (\bar K_3^{(0)}) ^2}< 0.
\end{equation} 
To complete the proof of positivity, suppose $\bar{Q} \in H^1_\rad$,
even thought it is not; it decays too slowly for $L^2$.  We could
decompose $H^1_\rad$ as
\begin{equation*}
  H^1_\rad = \spn\set{\bar Q}\oplus \spn\set{\bar Q}^\perp
\end{equation*} 
where the orthogonal decomposition is done {\it with respect to
  $\bar{\mathcal{B}}_+^{(0)}$.}  We can do this because
\eqref{e:nls3d_spec_proof_BQQ_sign} implies the form is
non-degenerate.  Since $\ind (\calB_+^{(0)}) =1$, $\calB_+^{(0)}\geq
0$ on $\spn\set{\bar Q}^\perp$.  Were this not the case, it would
yield a second negative direction, independent of $\bar Q$,
contradicting the index computation.

Given $f\in H^1_\rad$, $f \perp R$ and $f \perp \phi_2$, with respect
to $L^2$ orthogonality, $f$ is also $L^2$ orthogonal to $\bar q$.  We
might then decompose $f$ as proposed in the preceding paragraph
\[
f = c \bar{Q} + f^\perp, \quad \bar{\calB}_+^{(0)} (f^\perp, \bar Q) =
0.
\]
If $c=0$, then $f = f^\perp$ resides in a subspace of $H^1_\rad$ where
$\bar{\calB}_+^{(0)}\geq 0$ , completing the proof.  We now show
$c=0$.

Taking the $L^2$ inner product of $f$ and $\bar q$,
\begin{equation*}
  \begin{split}
    0 &= c \inner{\bar Q}{q} + \inner{f^\perp}{\bar q} = c
    \bar\calB_+^{(0)}(\bar Q, \bar Q) +
    \inner{u^\perp}{\bar\calL_+^{(0)}
      \bar Q} \\
    &= c \bar\calB_+^{(0)}(\bar Q, \bar Q) + \calB_+^{(0)}(f^\perp,
    \bar Q) = c \bar\calB_+^{(0)}(\bar Q, \bar Q) + 0.
  \end{split}
\end{equation*}
Since we have computed $\calB_+^{(0)}(\bar Q, \bar Q) \neq 0$, $c=0$.
Hence, $L^2$ orthogonality to $\bar q$ induces $\bar\calB_+^{(0)}$
orthogonality to $\bar Q$. Thus, $f$ lies in the positive subspace of
$\bar\calB_+^{(0)}$.

However, $\bar Q$ is {\it not} in $L^2$.  To make this argument work,
one can introduce a cutoff function and take an appropriate limit.  We
omit these details and refer the reader to \cite{FMR,
  Marzuola:2010p5770}.

Positivity of $\bar \calB_-^{(0)}$ is proven similarly.  In this case,
we use the $L^2$ orthogonality of $g$ to $\tfrac{1}{\sigma} R + rR'$
and $\phi_1$.  Indeed, we construct a new $\bar Q$, the solution to
\[
\bar \calL_-^{(0)} \bar Q = \tfrac{1}{\sigma} R + rR' - \frac{\bar
  J_3^{(0)}}{\bar J_2^{(0)}}\phi_1
\]
However, this is only successful for $\sigma_2 < \sigma < 1.2$.

Positivity of $\bar \calB_+^{(1)}$ is somewhat easier, as there is no
need to form a linear combination of elements; there is only one
direction, $r R$, to project away from in this harmonic.
$K_1^{(1)}<0$ for all the values of $.8\leq \sigma < \sigma_3$.  We
conclude that $L^2$ orthogonality of $f$ to $r R$ yields positivity.
Since all other forms have index zero, there is nothing to prove for
them.

We have now established that
\begin{equation}
  \bar\calB({\bf z}, {\bf z}) \geq 0
\end{equation}
since each form on each harmonic is positive. It is trivial to see
that
\begin{equation}
  \calB({\bf z}, {\bf z})  \geq \delta_0 \int e^{-\abs{\bf x}} \abs{\bf z}^2 d
  {\bf x}.
\end{equation}
This is almost the desired expression.  Consider,
\begin{equation}
  \calB({\bf z}, {\bf z})  \geq \theta \paren{\int \abs{\grad {\bf
        z}}^2 - \abs{\calV_+} \abs{f}^2 - \abs{\calV_-}\abs{g}^2 d
    {\bf x}} + (1-\theta)\delta_0\int e^{-\abs{\bf x}} \abs{{\bf z}}^2 d
  {\bf x}
\end{equation}
for all $\theta\in (0,1)$.  Both potentials satisfy the estimate
$\abs{\calV_\pm}\lesssim e^{-2\sigma\abs{\bf x}}$, $\sigma>\tfrac{1}{2}$, as $\abs{\bf x}\to
\infty$, so taking $\theta$ sufficiently small,
\begin{equation}
  \calB({\bf z}, {\bf z}) \geq  \theta \int \abs{\grad {\bf z}}^2 d {\bf
    x}  + \frac{(1-\theta)\delta_0}{2}\int e^{-\abs{\bf x}} \abs{{\bf z}}^2 d
  {\bf x}.
\end{equation}
Hence,
\[
\calB({\bf z}, {\bf z}) \gtrsim \int \paren{ \abs{\nabla {\bf z} }^2 +
  e^{-\abs{\bf x}} \abs{{\bf z}}^2 }d {\bf x}
\]

This proves the Spectral Property from which we then immediately get
Theorem \ref{t:3dnls} for purely imaginary eigenvalues.  As resonances
in $d=3$ have sufficient decay, we can also rule them out.

\subsection{3D CQNLS}
\label{s:3dcqnls}

In this section we prove the Spectral Property for the 3D CQNLS
equation.  It is quite similar to 3D NLS, though we are now concerned
with the values of $\gamma$ in \eqref{e:cqnls} for which it holds.

\subsubsection{Indexes}
\label{s:cqnls_idx}
\begin{prop}
  \label{p:cqnls_idx}
  For $0\leq\gamma\leq .012$,
  \begin{gather*}
    \ind \calL_+^{(0)} = \ind \calL_-^{(0)} = \ind \calL_+^{(1)} = 1\\
    \ind \calL_+^{(2)} = \ind \calL_-^{(1)} = 0
  \end{gather*}
\end{prop}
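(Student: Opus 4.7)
The plan is to follow exactly the template of the proof of Proposition \ref{p:nls3d_idx}, adapted to the CQNLS nonlinearity $f(s) = s - \gamma s^2$. First I would write down the relevant radial potentials explicitly: using the expressions for $\calV_\pm$ in \eqref{e:skewed_potentials} specialized to the cubic-quintic case, we have
\begin{equation*}
\calV_+(r) = r\bigl(3R - 10\gamma R^3\bigr)R', \qquad \calV_-(r) = r\bigl(R - 2\gamma R^3\bigr)R',
\end{equation*}
where $R = R(r;1)$ is the ground state of the CQNLS soliton equation at $\omega=1$ and $\gamma$ in the specified range. The ground state itself must be computed numerically (as in Appendix \ref{s:numerics}), since there is no closed-form expression for CQNLS solitons.

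Next, for each harmonic $k=0,1,2$ and each sign $\pm$, I would apply Proposition \ref{p:3d_index} to reduce the index computation to counting positive zeros of the initial value problem solution $U^{(k)}_\pm$ associated with the operator $\calL^{(k)}_\pm$. Concretely, at a fixed $\gamma$, I solve
\begin{equation*}
\Bigl[-\tfrac{d^2}{dr^2} - \tfrac{2}{r}\tfrac{d}{dr} + \calV_\pm(r) + \tfrac{k(k+1)}{r^2}\Bigr] U^{(k)}_\pm = 0
\end{equation*}
(in the $k=0$ case, using the normalization $U^{(0)}_\pm(0)=1$, $(U^{(0)}_\pm)'(0)=0$; in the $k\geq 1$ cases, the normalization prescribed in Proposition \ref{p:3d_index}) out to sufficiently large $r$. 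I then count positive roots. I would carry this out at a reasonably dense grid of values of $\gamma$ in $[0,0.012]$, using the same numerical tools used for the 3D NLS analysis.

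Then I would invoke continuity of the number of positive roots in the parameter $\gamma$: since $\calV_\pm(r;\gamma)$ depends smoothly on $\gamma$ (both directly and through smooth dependence of $R$ on $\gamma$), the count of positive zeros of $U^{(k)}_\pm$ can change only when either a zero migrates in from infinity or escapes to infinity, or two zeros merge at some interior point. With sufficiently fine sampling confirming the same zero count at every grid point, any such change between grid points would force the index function to take nongeneric values at the endpoints of the jump, which would be visible in plots analogous to Figures \ref{f:nls3d_idx}. Finally, Corollary \ref{c:mono_index} handles the harmonics $k\geq 2$ for $\calL_-$ and $k\geq 3$ for $\calL_+$, reducing them to the explicit cases already computed.

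The main obstacle, as in Proposition \ref{p:nls3d_idx}, is purely computational and interpretive rather than analytical: one needs confidence that the finite sample of $\gamma$ values faithfully captures the index function on the whole interval, and that the numerically computed soliton $R(r;1)$ (and hence the potentials $\calV_\pm$) is accurate enough that the zero counts of $U^{(k)}_\pm$ are reliable. Because $\gamma_\star \approx 0.0255$ is well outside the interval $[0,0.012]$, we remain safely in the orbitally unstable regime and no degeneracies in the algebraic structure of $JL$ are expected to interfere. Everything else is a direct transcription of the argument in Section \ref{s:3dnls_idx}.
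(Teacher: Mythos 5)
Your proposal is correct and follows essentially the same route as the paper: the authors likewise reduce each index to counting positive zeros of the initial value problem solutions via Proposition \ref{p:3d_index}, compute these numerically for the CQNLS potentials at sample values of $\gamma$ in $[0,0.012]$ (the paper uses only three such values), argue by continuity on the interval, and invoke Corollary \ref{c:mono_index} for the remaining harmonics. The only difference is cosmetic — you propose a denser sampling grid and spell out the potentials and normalizations more explicitly than the paper's one-line proof does.
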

\begin{proof}
  Examining Figures \ref{f:cqnls_idx}, we see that for three computed
  values of $0\leq\gamma\leq .012$, inclusive, we have the
  corresponding number of zero crossings.  We argue by continuity that
  this should hold at all points in the interval.
\end{proof}

\begin{figure}
  \subfigure[Harmonic
  $k=0$]{\includegraphics[width=2.4in]{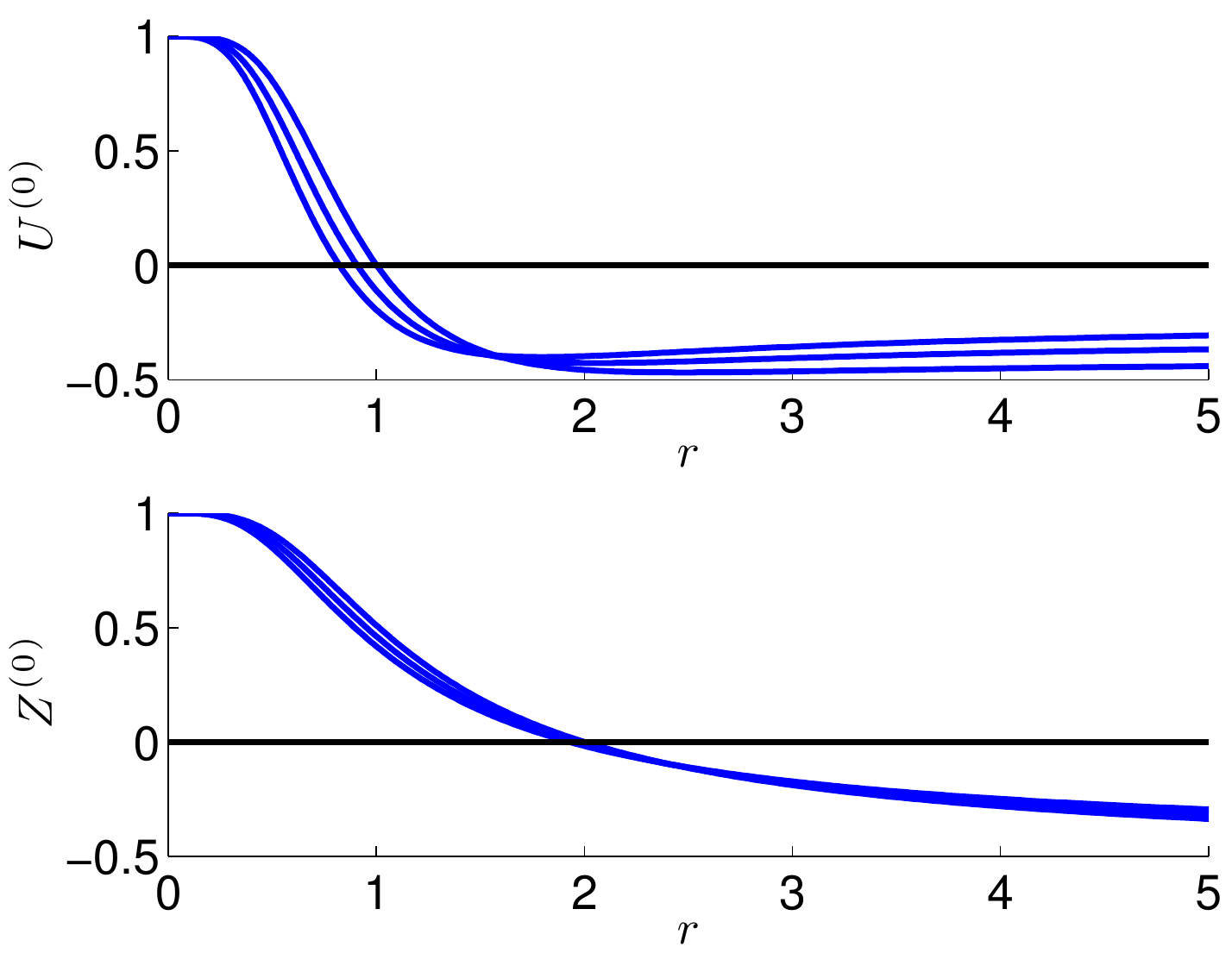}}
  \subfigure[Harmonic
  $k=1$]{\includegraphics[width=2.4in]{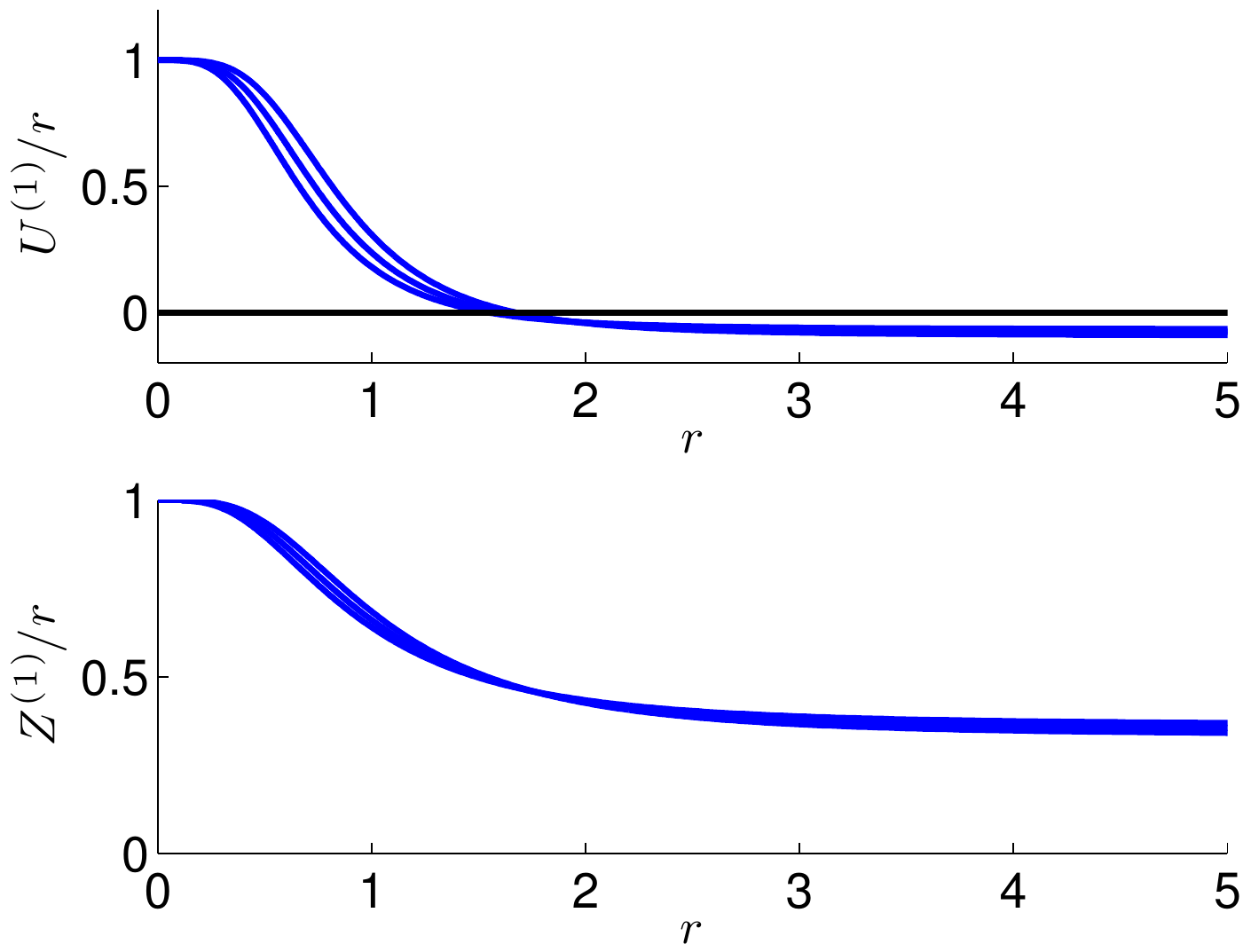}}

  \subfigure[Harmonic
  $k=2$]{\includegraphics[width=2.4in]{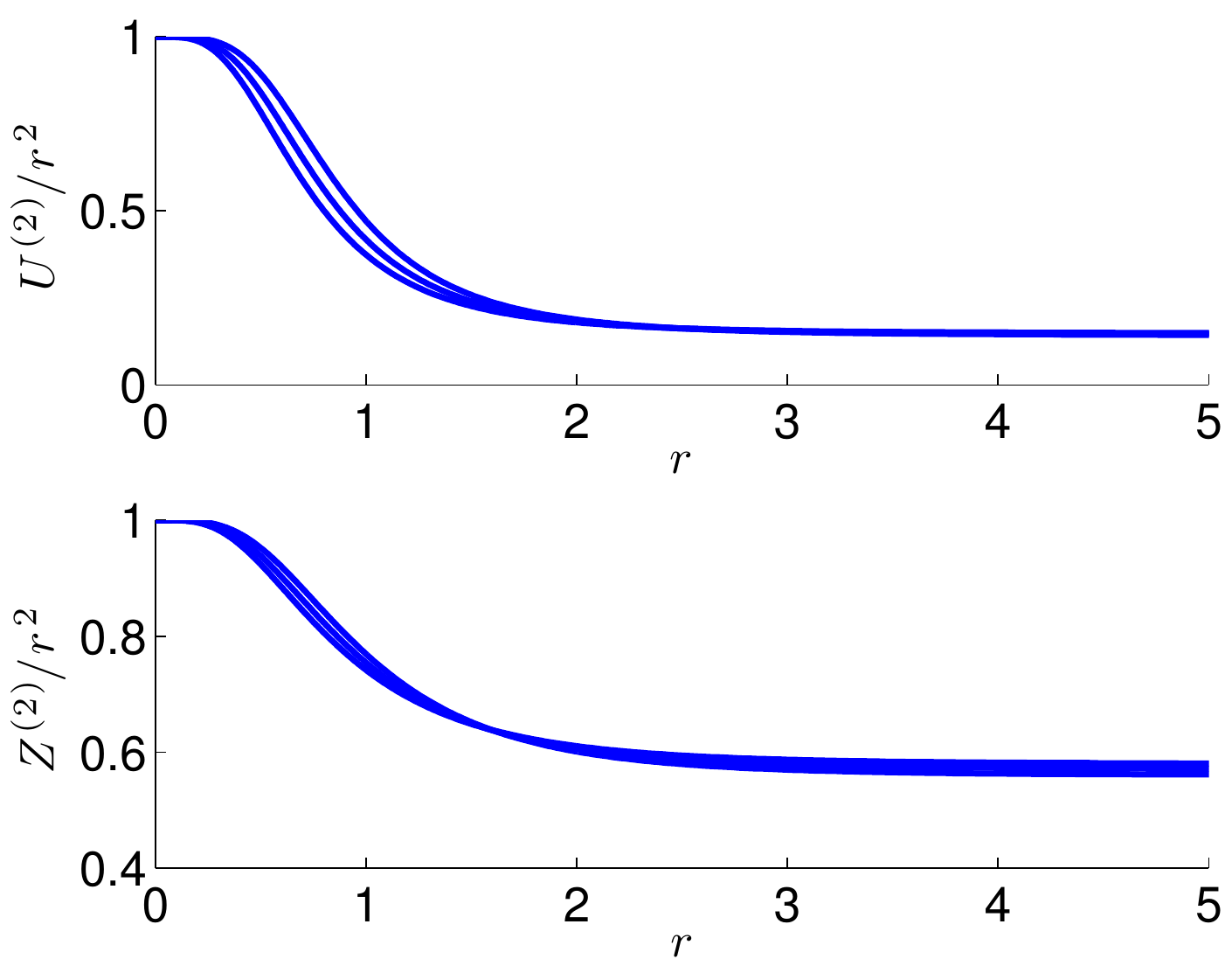}}
  \caption{Index functions for the 3D CQNLS equation computed at three
    values of $0\leq\gamma\leq .012$, inclusive.}
  \label{f:cqnls_idx}
\end{figure}

\subsubsection{Inner Products}

\begin{prop}
  \label{p:cqnls_k0}
  Let $U_1^{(0)}$ and $U_2^{(0)}$, solve
  \begin{subequations}
    \begin{align}
      \calL_+^{(0)} U_1^{(0)} = R \\
      \calL_+^{(0)} U_2^{(0)} = \phi_2
    \end{align}
  \end{subequations}
  Define $K_j^{(0)}$ as in \eqref{e:nls3d_k0_defs}.
  The $K_j^{(0)}$ have the values indicated in Figure
  \ref{f:cqnls_k_0_var}.  Moreover, for $0\leq \gamma\leq 0.012$,
  \begin{equation}
    \label{e:cqnls_k0_prod1_def}
    (K_1^{(0)}  K_2^{(0)} - (K_3^{(0)})^2)/K_1^{(0)}<0
  \end{equation}
  as pictured in Figure \ref{f:cqnls_kprod1_0_var}.

\end{prop}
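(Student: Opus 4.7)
The proof will closely parallel that of Proposition \ref{p:nls3d_k0}, only substituting the CQNLS nonlinearity $f(s) = s - \gamma s^2$ for the pure power $s^\sigma$ and varying in the parameter $\gamma$ rather than $\sigma$. The plan is to first verify existence and uniqueness of $U_1^{(0)}$ and $U_2^{(0)}$, then numerically compute the three inner products $K_1^{(0)}$, $K_2^{(0)}$, $K_3^{(0)}$ on a fine sample of $\gamma$ values in $[0,0.012]$, and finally verify the sign condition \eqref{e:cqnls_k0_prod1_def}.

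For existence and uniqueness of the two radial solutions, I would invoke Proposition \ref{p:3d_inverse} directly. This requires knowing $\ind(\calB_+^{(0)}) = 1$, which was already established in Proposition \ref{p:cqnls_idx}, together with the fact that the right-hand sides $R$ and $\phi_2$ are smooth and exponentially decaying (standard for solitons of \eqref{e:soliton} and for the eigenstate associated to the unstable real eigenvalue $\mu_\star$). Hence $U_1^{(0)}$ and $U_2^{(0)}$ exist uniquely and decay like $r^{-1}$.

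The second step is the numerical evaluation. Using the boundary value problem solvers and quadrature routines described in Appendix \ref{s:numerics}, I would tabulate $K_1^{(0)}(\gamma)$, $K_2^{(0)}(\gamma)$, $K_3^{(0)}(\gamma)$ at a uniform grid of values in $[0,0.012]$ (matching the resolution of the $\sigma$ grid used in Proposition \ref{p:nls3d_k0}, e.g.\ 41 points). Since the soliton profile $R$ and the unstable eigenstate $\phi_2$ depend continuously on $\gamma$ (the discrete eigenvalue $\mu_\star$ persists away from bifurcation by standard Kato perturbation theory, and $\gamma < \gamma_\star \approx 0.0255$ keeps us firmly in the orbitally unstable regime), the maps $\gamma \mapsto K_j^{(0)}(\gamma)$ are continuous. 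The plotted values in Figure \ref{f:cqnls_k_0_var} then give the claim.

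For the sign claim \eqref{e:cqnls_k0_prod1_def}, I would evaluate the combination $(K_1^{(0)} K_2^{(0)} - (K_3^{(0)})^2)/K_1^{(0)}$ at the same sample points and observe (per Figure \ref{f:cqnls_kprod1_0_var}) that it remains strictly negative throughout the interval; continuity in $\gamma$ of each $K_j^{(0)}$, together with $K_1^{(0)}$ being bounded away from zero on the interval, extends the inequality from the discrete sample to the closed interval $[0,0.012]$. The main obstacle, as in the NLS case, is not mathematical but numerical: ensuring that the computed inner products are accurate enough to determine signs reliably, particularly near $\gamma = 0$ where the expression connects with the 3D cubic results of \cite{Marzuola:2010p5770}, and verifying that the grid spacing plus the monotonic/well-separated behavior visible in the figures rules out any sign changes between sampled values.
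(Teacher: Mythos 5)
Your proposal matches the paper's proof in all essentials: existence and uniqueness of $U_1^{(0)}$, $U_2^{(0)}$ via Proposition \ref{p:3d_inverse} (using the index from Proposition \ref{p:cqnls_idx}), followed by direct numerical computation of the $K_j^{(0)}$ at a uniform grid of $\gamma$ values in $[0,0.012]$ (the paper uses twenty-five points rather than forty-one) and a continuity argument to extend the sign condition \eqref{e:cqnls_k0_prod1_def} to the whole interval. No substantive differences.
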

\begin{proof}
  As before, we prove this by direct computation, at twenty five
  values of $\gamma$ between $0$ and $.012$, inclusive.
\end{proof}

\begin{figure}
  \subfigure[]{\includegraphics[width=2.4in]{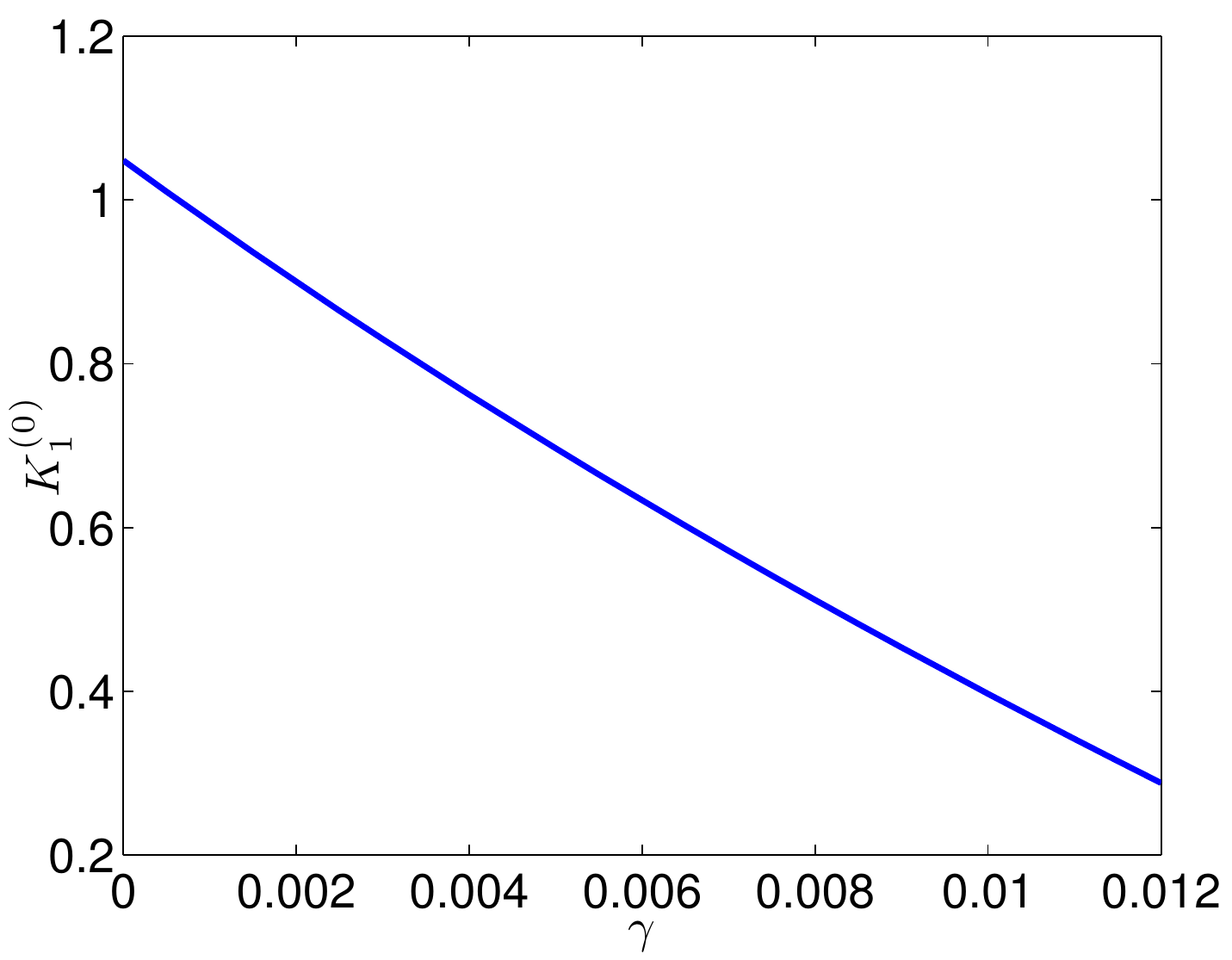}}
  \subfigure[]{\includegraphics[width=2.4in]{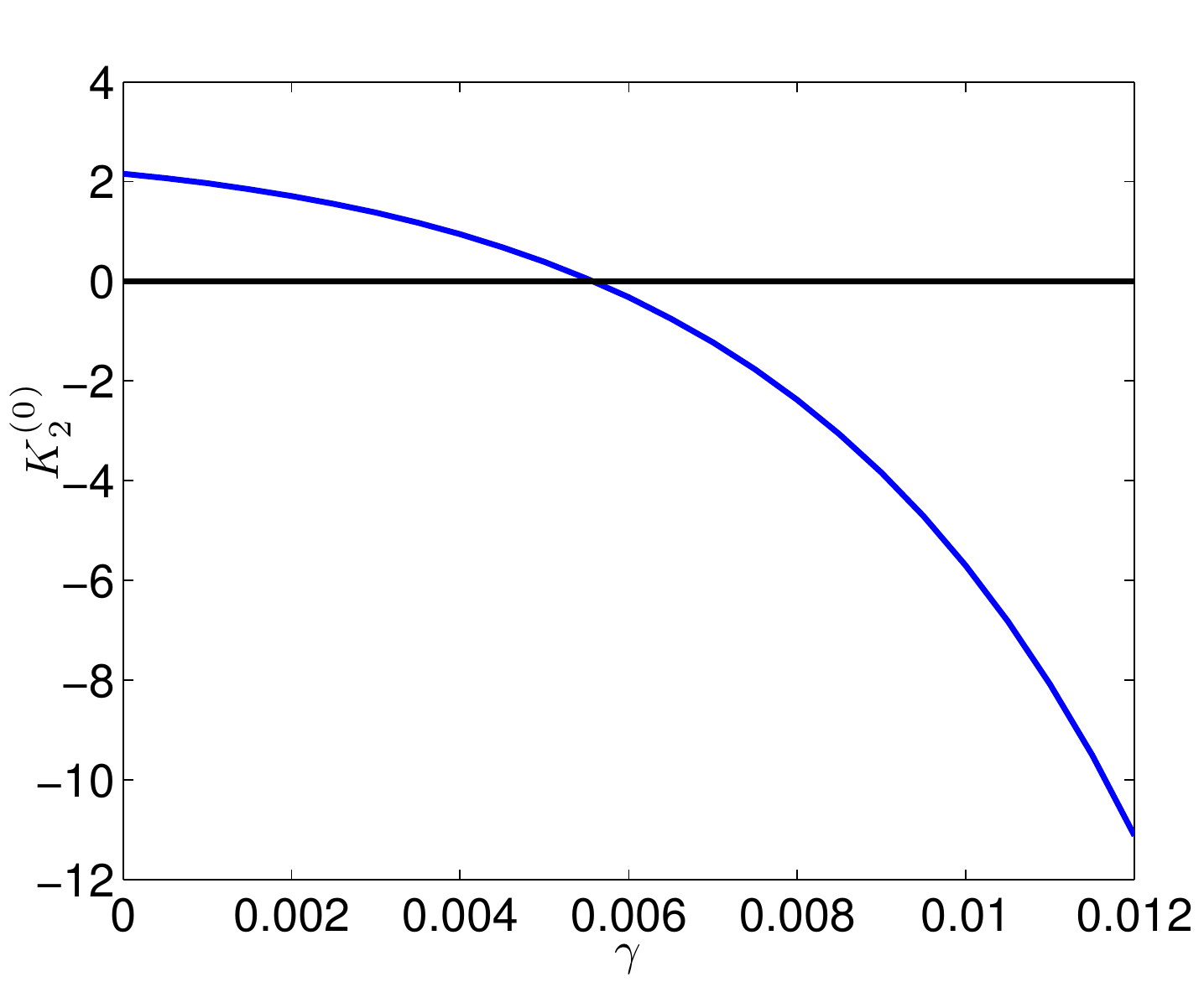}}
  \subfigure[]{\includegraphics[width=2.4in]{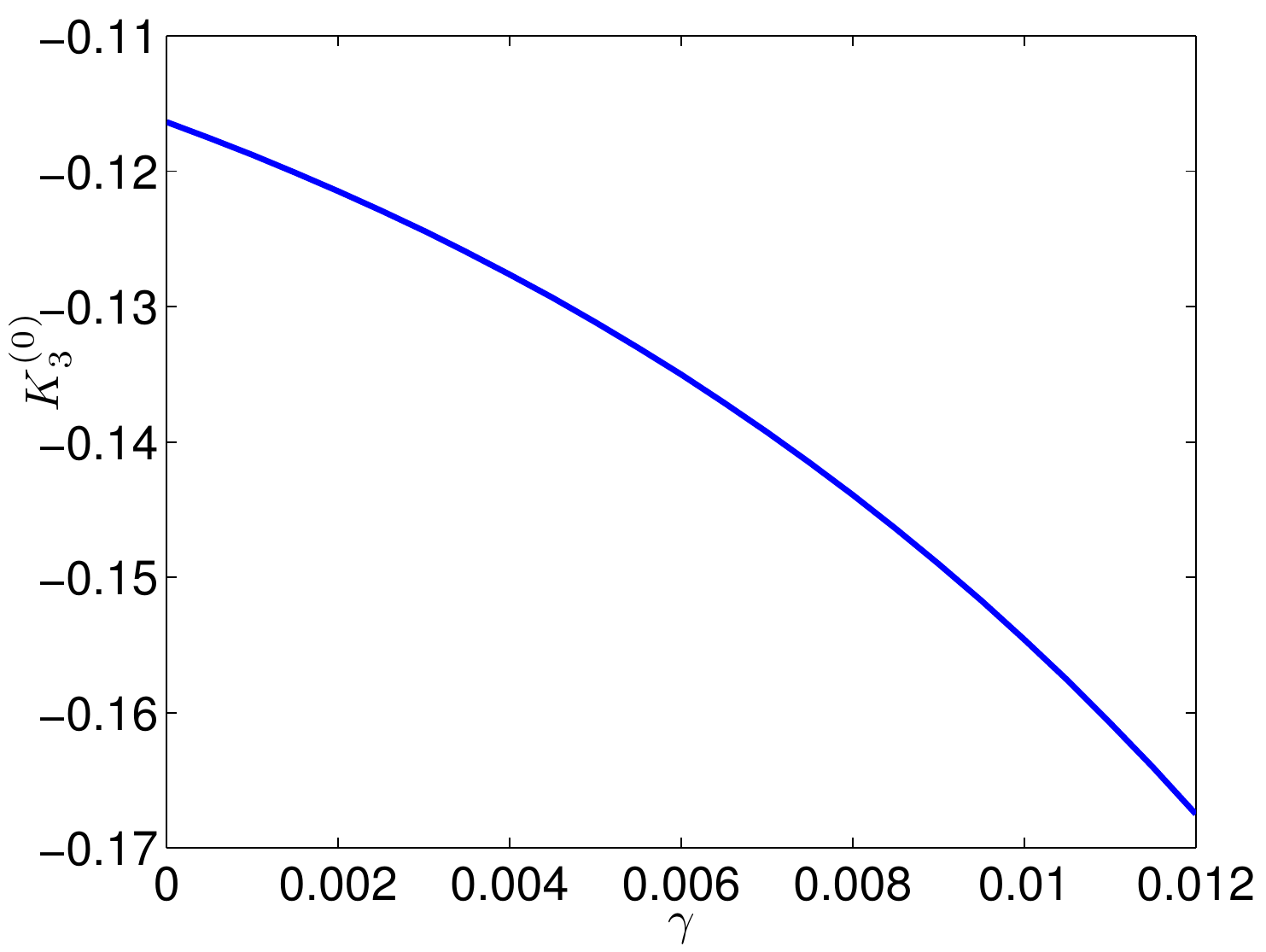}}
  \caption{\eqref{e:nls3d_k0_defs} as functions of $\gamma$ for 3D
    CQNLS.}
  \label{f:cqnls_k_0_var}
\end{figure}

\begin{figure}
  \includegraphics[width=2.4in]{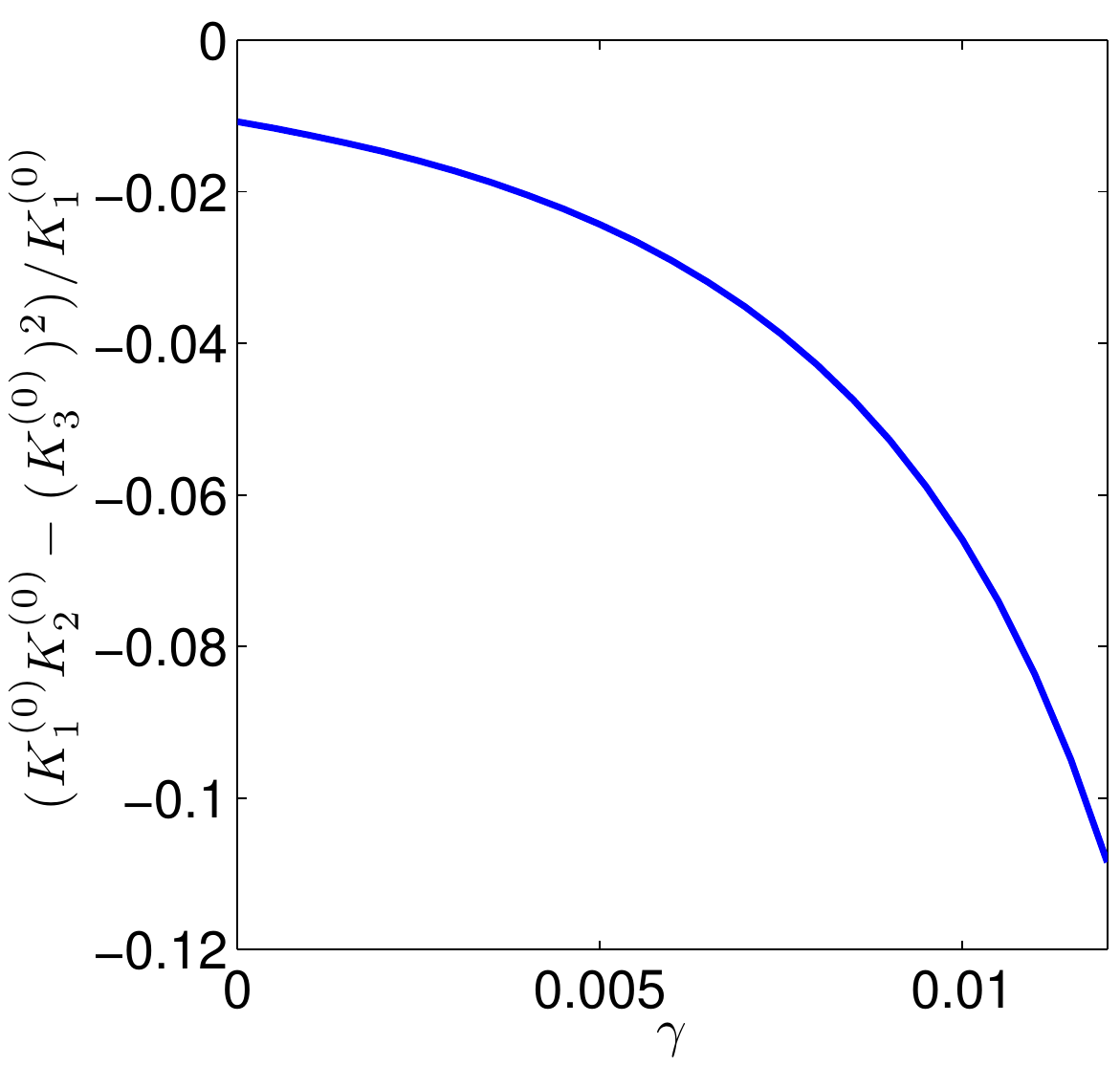}
  \caption{\eqref{e:cqnls_k0_prod1_def} as a function of $\gamma$ for
    3D CQNLS.}
  \label{f:cqnls_kprod1_0_var}
\end{figure}

\begin{prop}
  \label{p:cqnls_j0}
  Let $Z_1^{(0)}$ and $Z_2^{(0)}$, solve
  \begin{subequations}
    \begin{align}
      \calL_-^{(0)} Z_1^{(0)} = \partial_\omega R \\
      \calL_-^{(0)} Z_2^{(0)} = \phi_1
    \end{align}
  \end{subequations}
  Define $J_j^{(0}$ as in \eqref{e:nls3d_j0_defs}.
  The $J_j^{(0)}$ have the values indicated in Figure
  \ref{f:cqnls_j_0_var}.  Moreover there exist $\gamma_1>0$ and
  $\sigma_2>0$ such that
  \begin{gather}
    J_1^{(0)} <0,\quad \text{for $0 \geq \gamma< \gamma_1$}\\
    \label{e:cqnls_j0_prod2_def}
    (J_1^{(0)} J_2^{(0)} - (J_3^{(0)})^2)/J_2^{(0)}<0,\quad \text{for
      $0\leq \gamma<\gamma_2$}
  \end{gather}
  as pictured in Figure \ref{f:cqnls_jprod2_0_var}, where
  \begin{subequations}
    \begin{align}
      \label{e:cqnls_gamma1}
      \gamma_1 &=0.00989115  \\
      \label{e:cqnls_gamma2}
      \gamma_2 &= 0.0109065
    \end{align}
  \end{subequations}

\end{prop}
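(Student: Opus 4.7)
The plan is to follow the same template that worked for Proposition \ref{p:nls3d_j0} in the 3D supercritical NLS case, adapting it to the CQNLS setting where the soliton family is parametrized by $\gamma$ rather than $\sigma$. The proof is essentially computational and has three phases: establishing existence and uniqueness of the auxiliary boundary value problems, evaluating the three inner products $J_j^{(0)}$ on a grid in $\gamma$, and localizing the two threshold values $\gamma_1, \gamma_2$ by root-finding.

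First I would invoke Proposition \ref{p:3d_inverse}, applied to $\calL_-^{(0)}$, to assert that $Z_1^{(0)}$ and $Z_2^{(0)}$ exist uniquely and are radially symmetric, decaying like $r^{-1}$ at infinity. The right-hand sides $\partial_\omega R$ and $\phi_1$ are smooth, radial, and exponentially localized: the soliton $R(\cdot; 1)$ decays exponentially (as it solves the elliptic equation \eqref{e:soliton} with $\omega = 1$ in the mass of the potential), $\partial_\omega R$ inherits this decay from the implicit function theorem applied to the soliton branch, and $\phi_1$ is an eigenstate of $JL$ at an isolated eigenvalue $\mu_\star$ off the essential spectrum. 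Invertibility of $\calL_-^{(0)}$ itself requires the index computation in Proposition \ref{p:cqnls_idx}, which has already been established.

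Next, using the methods summarized in Appendix \ref{s:numerics}, I would compute $R$, $\partial_\omega R$, and the unstable eigenpair $(\phi_1, \phi_2, \mu_\star)$ for the CQNLS problem at the same twenty-five uniformly spaced values of $\gamma \in [0, 0.012]$ used in Proposition \ref{p:cqnls_k0}, then numerically solve for $Z_1^{(0)}$ and $Z_2^{(0)}$ and evaluate the three inner products defining $J_1^{(0)}, J_2^{(0)}, J_3^{(0)}$. The data produces the curves shown in Figure \ref{f:cqnls_j_0_var}, and the combination $(J_1^{(0)} J_2^{(0)} - (J_3^{(0)})^2)/J_2^{(0)}$ produces Figure \ref{f:cqnls_jprod2_0_var}. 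One then applies the root-finding procedure of Appendix \ref{s:roots} to the scalar functions $\gamma \mapsto J_1^{(0)}(\gamma)$ and $\gamma \mapsto (J_1^{(0)} J_2^{(0)} - (J_3^{(0)})^2)/J_2^{(0)}$ to bracket and refine the zero crossings, yielding $\gamma_1$ and $\gamma_2$ to the stated precision. Continuity of the inner products in $\gamma$, which follows from continuity of the soliton branch and of the BVP solutions, then upgrades the grid-based sign information to the full intervals $0 \leq \gamma < \gamma_i$.

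The main obstacle is not analytic but numerical: one must verify that $J_2^{(0)}$ does not itself vanish or change sign on $[0, \gamma_2]$, so that the denominator in $(J_1^{(0)} J_2^{(0)} - (J_3^{(0)})^2)/J_2^{(0)}$ does not introduce spurious sign changes or blow-ups that would confuse the root-finder. Inspection of Figure \ref{f:cqnls_j_0_var}(b) should confirm this. A secondary concern is accuracy at the left endpoint $\gamma = 0$, which reduces to the 3D cubic NLS case handled in \cite{Marzuola:2010p5770} and therefore provides a useful consistency check: the values of $J_j^{(0)}$ at $\gamma = 0$ must match those reported in the cubic case. Once these checks pass, the sign statements \eqref{e:cqnls_j0_prod2_def} and the bound $J_1^{(0)} < 0$ on $[0, \gamma_1)$ follow immediately.
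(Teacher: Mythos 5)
Your proposal matches the paper's approach exactly: the paper omits an explicit proof for this proposition, but the intended argument is the same computer-assisted template used for Propositions \ref{p:nls3d_j0} and \ref{p:cqnls_k0} --- existence and uniqueness of $Z_j^{(0)}$ via Proposition \ref{p:3d_inverse} (with the index from Proposition \ref{p:cqnls_idx}), numerical evaluation of the $J_j^{(0)}$ on the grid of $\gamma$ values, and localization of $\gamma_1,\gamma_2$ by the root-finding procedure of Appendix \ref{s:roots}. Your additional checks (non-vanishing of $J_2^{(0)}$ on the interval and consistency with the cubic case at $\gamma=0$) are sensible and consistent with the paper's \emph{a posteriori} verification philosophy.
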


\begin{figure}
  \subfigure[]{\includegraphics[width=2.4in]{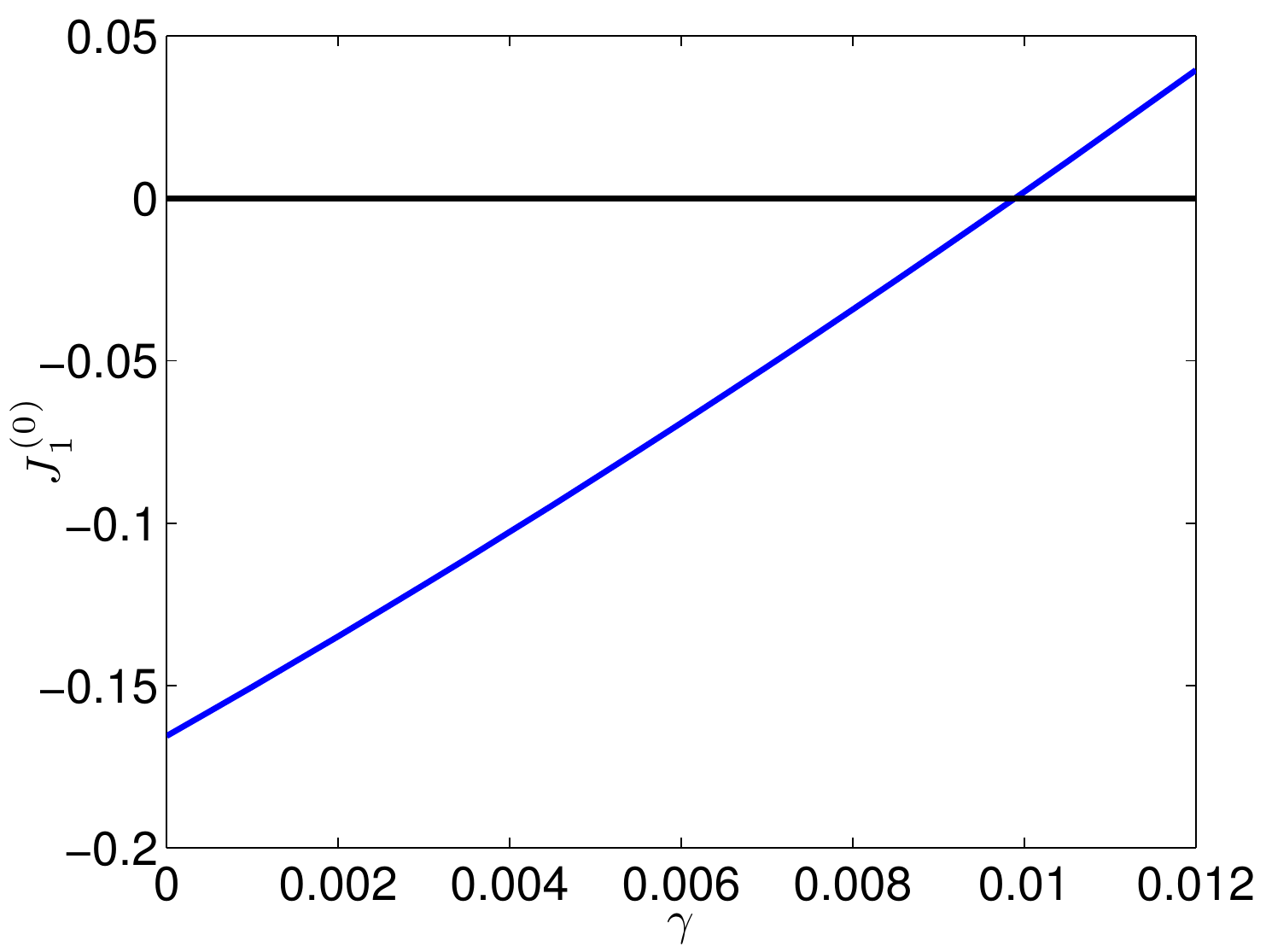}}
  \subfigure[]{\includegraphics[width=2.4in]{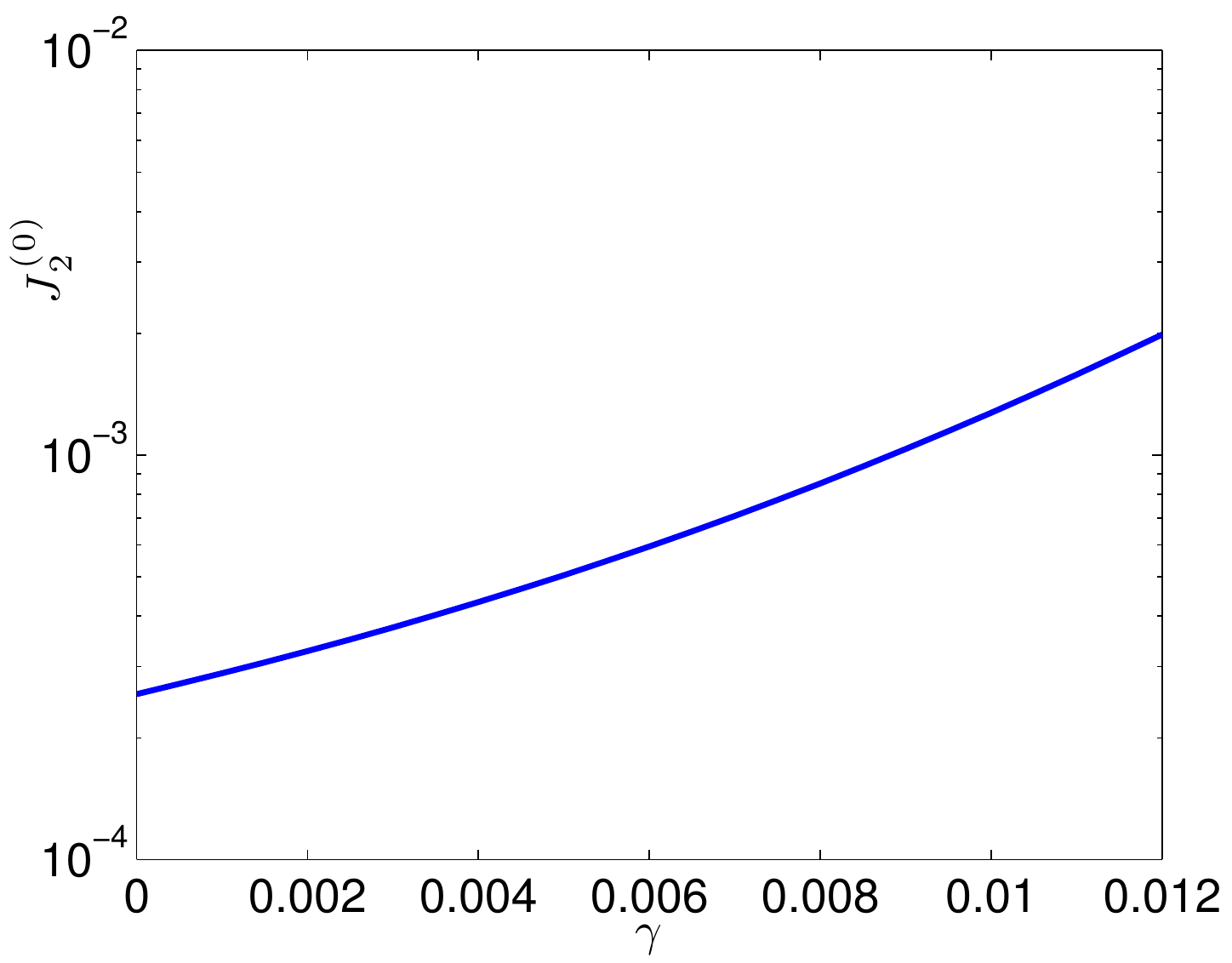}}
  \subfigure[]{\includegraphics[width=2.4in]{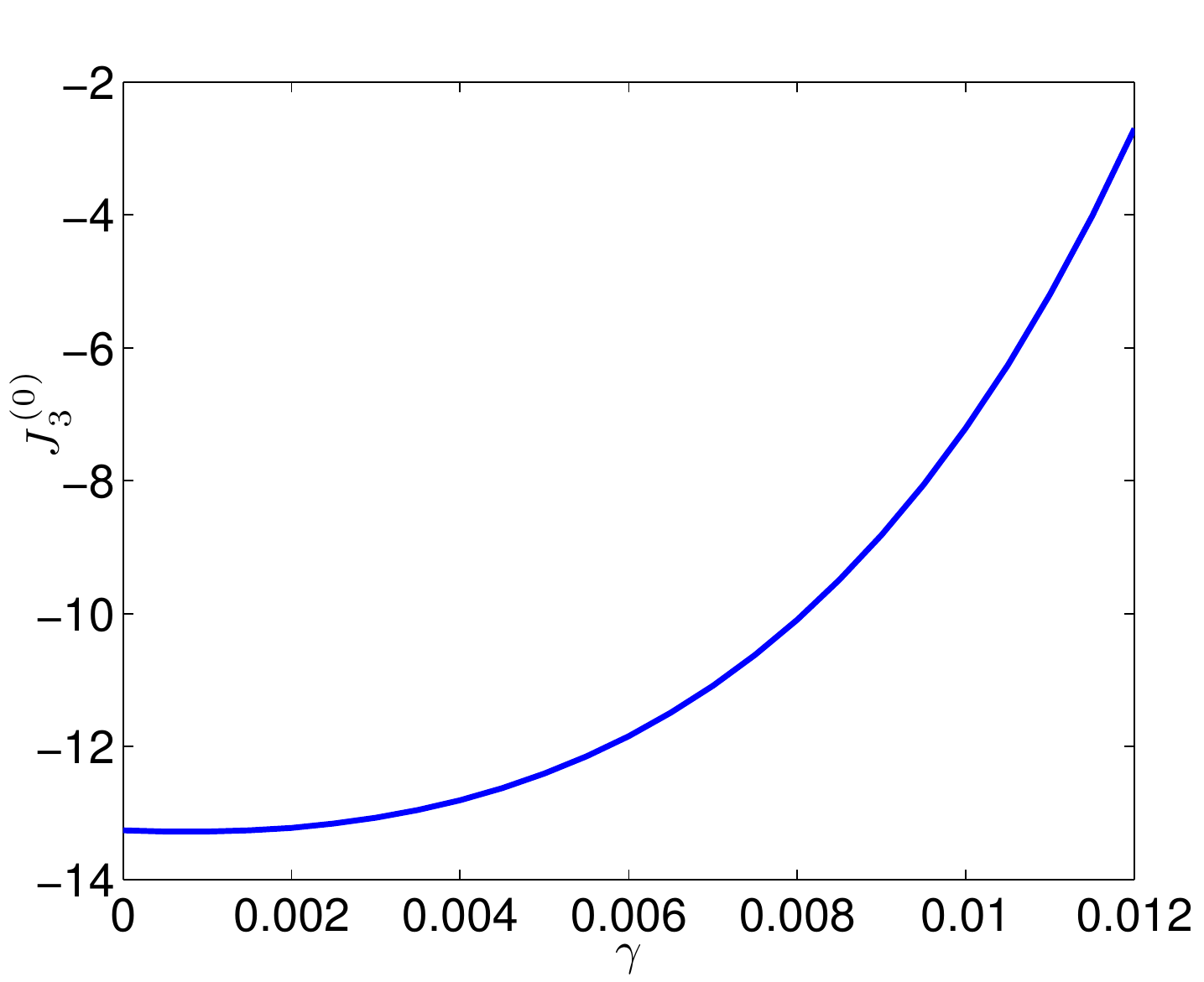}}
  \caption{The $J_j^{(0)}$ as functions of $\gamma$ for 3D CQNLS.}
  \label{f:cqnls_j_0_var}
\end{figure}

\begin{figure}
  \includegraphics[width=2.4in]{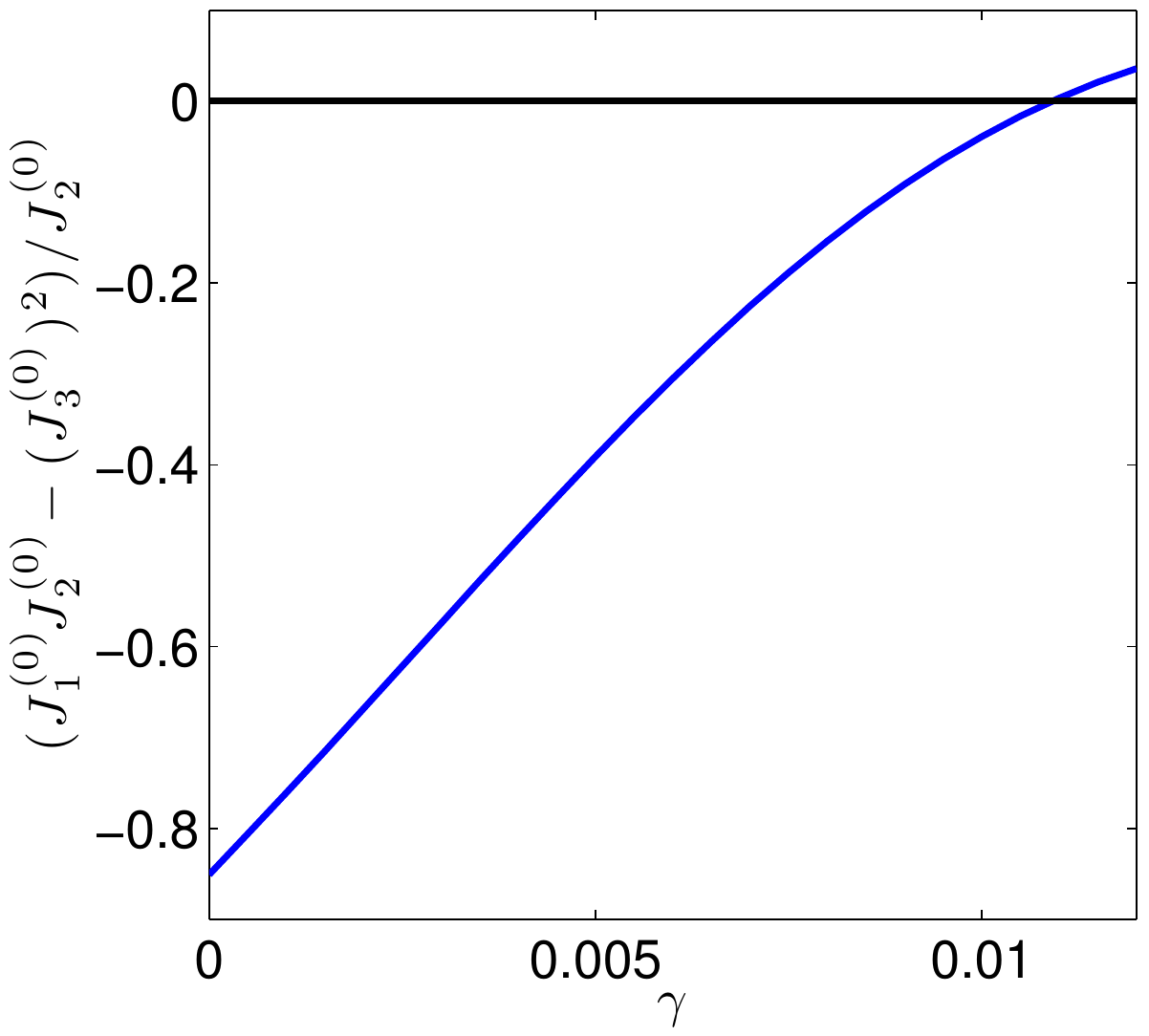}
  \caption{$(J_1^{(0)} J_2^{(0)} - (J_3^{(0)})^2 )/J_2^{(0)}$ as a
    function of $\gamma$ for 3D CQNLS.}
  \label{f:cqnls_jprod2_0_var}
\end{figure}

\begin{prop}
  \label{p:nls3d_k1}
  Let $U_1^{(1)}$ solve
  \begin{equation}
    \calL_+^{(1)} U_1^{(1)} = r R, \quad U_1^{(1)} \in L^\infty
  \end{equation}
  Define $K_1^{(1)}$ as in \eqref{e:k1_defs}.  Then
  \begin{equation*}
    K_1^{(1)}<0,\quad \text{for $0 \leq \gamma 0.12$}
  \end{equation*}
  as pictured in Figure \ref{f:cqnls_k1_1_var}.
\end{prop}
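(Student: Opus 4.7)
The plan is to mirror the proof of Proposition \ref{p:nls3d_k1} from the 3D supercritical NLS section, adapting it to the CQNLS nonlinearity. First, I would invoke Proposition \ref{p:3d_inverse} applied to the operator $\calL_+^{(1)}$ with right-hand side $rR$: since $R$ decays exponentially and the index computation of Proposition \ref{p:cqnls_idx} gives $\ind \calL_+^{(1)} = 1$ (so $\calL_+^{(1)}$ is invertible modulo its one-dimensional negative subspace, but uniqueness in the appropriate function class is secured by the proposition's construction for the $k=1$ harmonic), a unique solution $U_1^{(1)}$ with $(1+r^{2}) U_1^{(1)} \in L^\infty(\R^3) \cap C^2(\R^3)$ exists and is in particular bounded.

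With existence in hand, the task reduces to computing the scalar quantity
\begin{equation*}
K_1^{(1)}(\gamma) = \inner{\calL_+^{(1)} U_1^{(1)}}{U_1^{(1)}} = \inner{rR}{U_1^{(1)}}
\end{equation*}
as a function of $\gamma$ and showing it is strictly negative throughout $[0, 0.012]$. I would proceed exactly as in the proof of Proposition \ref{p:cqnls_k0}: at a finite grid of values $\gamma \in [0, 0.012]$ (on the order of the twenty-five points used for the $k=0$ computations), numerically solve for the soliton $R$ using the shooting/continuation method described in Appendix \ref{s:numerics}, then solve the radial boundary value problem $\calL_+^{(1)} U_1^{(1)} = rR$ in the $k=1$ harmonic, imposing the behavior $U_1^{(1)}(r) \sim r$ near the origin and the bounded/decaying behavior at infinity consistent with Proposition \ref{p:3d_inverse}. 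The inner product is then evaluated by quadrature, and the resulting values of $K_1^{(1)}(\gamma)$ are displayed in Figure \ref{f:cqnls_k1_1_var}.

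To bridge the finite sample to the entire interval, I would appeal to the continuity of $R$, of the potentials $\calV_\pm$, and hence of $\calL_+^{(1)}$ in $\gamma$; standard perturbation theory together with the uniform invertibility granted by Proposition \ref{p:inverse_stability} shows that $\gamma \mapsto U_1^{(1)}$ is continuous in an appropriate norm, so $K_1^{(1)}(\gamma)$ is a continuous function of $\gamma$. Combined with a bound on the modulus of continuity (observable from the computed data as a visibly smooth curve bounded away from zero), the sample points are sufficient to certify negativity on the whole interval.

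The main obstacle is, as throughout the paper, not analytical but numerical: guaranteeing that the computed $K_1^{(1)}(\gamma)$ values are accurate enough and spaced finely enough that no sign change is missed between grid points, and that the decay of $U_1^{(1)}$ (which is only $\sim 1/r^{2}$ at infinity in the $k=1$ harmonic) is correctly resolved so that the inner product $\inner{rR}{U_1^{(1)}}$ converges reliably. Since $rR$ decays exponentially, the integrand is well localized and the quadrature should be robust, but this is the step where the computer assistance is essential and where care matching the protocols of Appendices \ref{s:numerics} and \ref{s:roots} is needed.
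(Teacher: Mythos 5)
Your proposal matches the paper's approach: the paper proves this proposition (and its 3D NLS analogue) by invoking Proposition \ref{p:3d_inverse} for existence of $U_1^{(1)}$, numerically solving the $k=1$ radial boundary value problem at a finite grid of $\gamma$ values in $[0,0.012]$, evaluating $K_1^{(1)}=\inner{rR}{U_1^{(1)}}$ by quadrature, and arguing by continuity that negativity persists between sample points. The only caveat, which you correctly flag, is that the continuity/interpolation step and the quadrature accuracy are exactly where the paper itself acknowledges reliance on computer assistance rather than a fully rigorous bound.
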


\begin{figure}
  \includegraphics[width=2.4in]{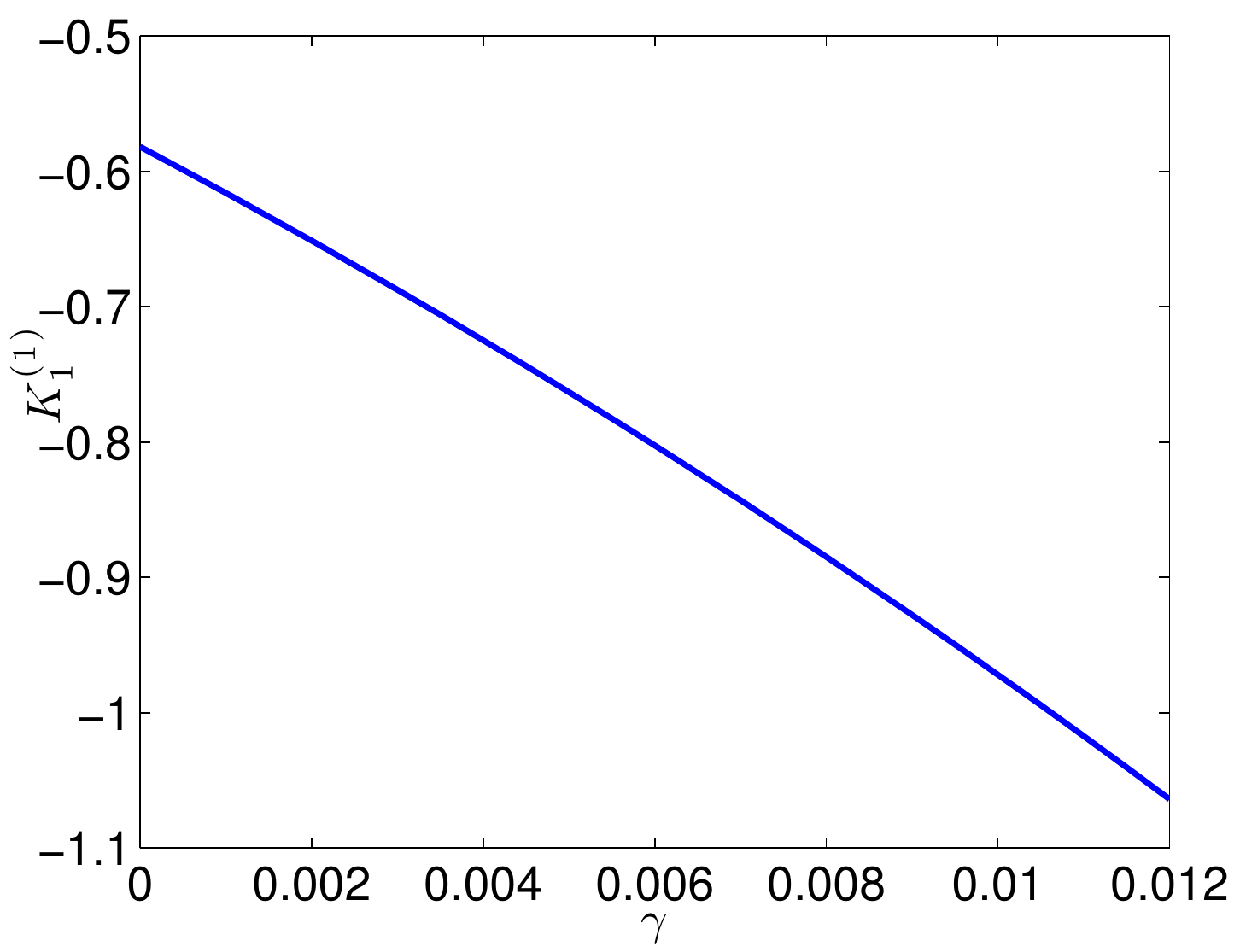}
  \caption{$K_1^{(1)}$ as a function of $\gamma$ for 3D CQNLS.}
  \label{f:cqnls_k1_1_var}
\end{figure}

\subsubsection{Proof of the Spectral Property}
The proof is quite similar to that of 3D NLS and we omit many of the
details.  For a fixed $\gamma$ within the allowable range, we take
$\delta_0$ sufficiently small so as not to alter the indexes or
appreciably change the inner products.

A notable difference is that for $\bar\calB_+^{(0)}$, we let $\bar Q$
solve
\begin{equation}
  \bar \calL_+^{(0)} \bar Q = \bar q = -\frac{\bar K_3^{(0)}}{\bar K_1^{(0)}} R + \phi_2.
\end{equation}
By construction and Proposition \ref{p:cqnls_k0},
\begin{equation}
  \bar\calB_+^{(0)}(\bar Q, \bar Q) =
  \frac{1}{K_1^{(0)}}\paren{K_1^{(0)} K_2^{(0)} - \paren{K_3^{(0)}}^2}< 0.
\end{equation}
If $f $ Is $L^2$ orthogonal to $R$ and $\phi_2$, then it is $L^2$
orthogonal to any linear combination.  As before, this will induce
$\calB_+^{()}$ orthogonality to $\bar Q$, ensuring that
$\calB_+^{(0)}$ is positive for such $f$.  This holds for all $0\leq
\gamma < 0.012$.

Positivity of $\bar\calB_-^{(0)}$ is proven as before, except it only
holds for $0 \leq \gamma < \gamma_2$ due to the results of Proposition
\ref{p:cqnls_j0}.  Positivity of $\bar \calB_+^{(1)}$ is the same, as
are the rest of the forms, since they have index zero.  This proves
positivity of $\bar \calB$ on $\calU$, and the coercivity of $\calB$
on $\calU$ in the form of \eqref{e:spec_prop} follows as before.  This
yields Theorem \ref{t:3dcqnls}.

\subsection{1D Supercritical NLS}
\label{s:1dnls}

In contrast to the 3D problems, where we decomposed into spherical
harmonics, in 1D we decompose into even and odd functions.

\subsubsection{Indexes}
\label{s:nls1d_idx}

\begin{prop}
  \label{p:nls1d_idx}
  For $2.3\leq\sigma\leq 6.3$,
  \begin{equation*}
    \ind (\calB_+^{(e)}) = \ind (\calB_-^{(e)}) = \ind (\calB_+^{(o)}) =
    1,\quad \ind (\calB_-^{(o)}) =0
  \end{equation*}
\end{prop}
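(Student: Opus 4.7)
The plan is to apply Proposition \ref{p:1d_index} to each of the four scalar bilinear forms $\calB_+^{(e)}$, $\calB_-^{(e)}$, $\calB_+^{(o)}$, $\calB_-^{(o)}$ arising from the even/odd decomposition \eqref{e:1d_decomp}, following exactly the template of Propositions \ref{p:nls3d_idx} and \ref{p:cqnls_idx}. For 1D NLS with $f(s)=s^\sigma$ the distorted potentials from \eqref{e:skewed_potentials} reduce to
\begin{equation*}
\calV_+ = \sigma(2\sigma+1)\, x R^{2\sigma-1}R', \qquad \calV_- = \sigma\, x R^{2\sigma-1}R',
\end{equation*}
both of which are smooth, even, and exponentially decaying in $|x|$ (since $R$ is an even, exponentially decaying ground state). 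Hence the hypotheses of Proposition \ref{p:1d_index} are met for each scalar operator $\calL_\pm$ with this potential.

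First I would compute, at a finite grid of parameter values $\sigma_j$ spanning $[2.3,6.3]$, the ground state $R(\,\cdot\,;1)$ by a standard shooting or spectral method, and from it the two potentials $\calV_\pm$. For each of $\calL_\pm$ and each parity I would then integrate the initial value problems of Proposition \ref{p:1d_index} outward from $x=0$ using the even data $(1,0)$ and the odd data $(0,1)$, and count the positive zeros of the resulting $U_\pm^{(e)}$ and $U_\pm^{(o)}$. The prediction to check is that $U_+^{(e)}$, $U_-^{(e)}$, and $U_+^{(o)}$ each have exactly one positive zero while $U_-^{(o)}$ has none; Proposition \ref{p:1d_index} then identifies these zero counts with $\ind(\calB_+^{(e)})$, $\ind(\calB_-^{(e)})$, $\ind(\calB_+^{(o)})$, $\ind(\calB_-^{(o)})$, giving the claimed values $1,1,1,0$.

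Passing from the finite sample to the entire interval relies on continuity: $R(\,\cdot\,;1)$ depends continuously on $\sigma$ in the supercritical regime, and hence so do $\calV_\pm$ and the solutions $U^{(e/o)}(\,\cdot\,;\sigma)$ uniformly on compact subsets of $[0,\infty)$; the number of positive zeros is therefore locally constant in $\sigma$ unless a zero enters or leaves via $x=0$ or escapes to $+\infty$. The main obstacle, exactly as in the 3D analogs, is to rule out bifurcations between grid points where two zeros could be born tangentially and change the count by two without reaching either endpoint. The pragmatic resolution is to sample $\sigma$ densely enough that the minima of $|U^{(e/o)}|$ between consecutive observed zeros stay uniformly bounded away from $0$ on each gap, and to track how the observed zero locations migrate between neighboring $\sigma_j$; this is the same informal continuation step used in Propositions \ref{p:nls3d_idx} and \ref{p:cqnls_idx}, and no qualitatively new difficulty arises in 1D.
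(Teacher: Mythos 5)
Your proposal matches the paper's proof: the authors likewise invoke Proposition \ref{p:1d_index}, numerically integrate the even and odd initial value problems for $\calL_\pm$ at a finite sample of $\sigma$ values in $[2.3,6.3]$ (five, in their case), count the positive zeros shown in Figure \ref{f:nls1d_idx}, and extend to the whole interval by the same informal continuity argument. Your discussion of how to guard against zeros appearing between grid points is, if anything, more careful than what the paper records.
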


\begin{proof}
  As before, we prove this by direct computation.  The index functions
  appear in Figure \ref{f:nls1d_idx}.
\end{proof}

\begin{figure}
  \subfigure[Even
  Functions]{\includegraphics[width=2.4in]{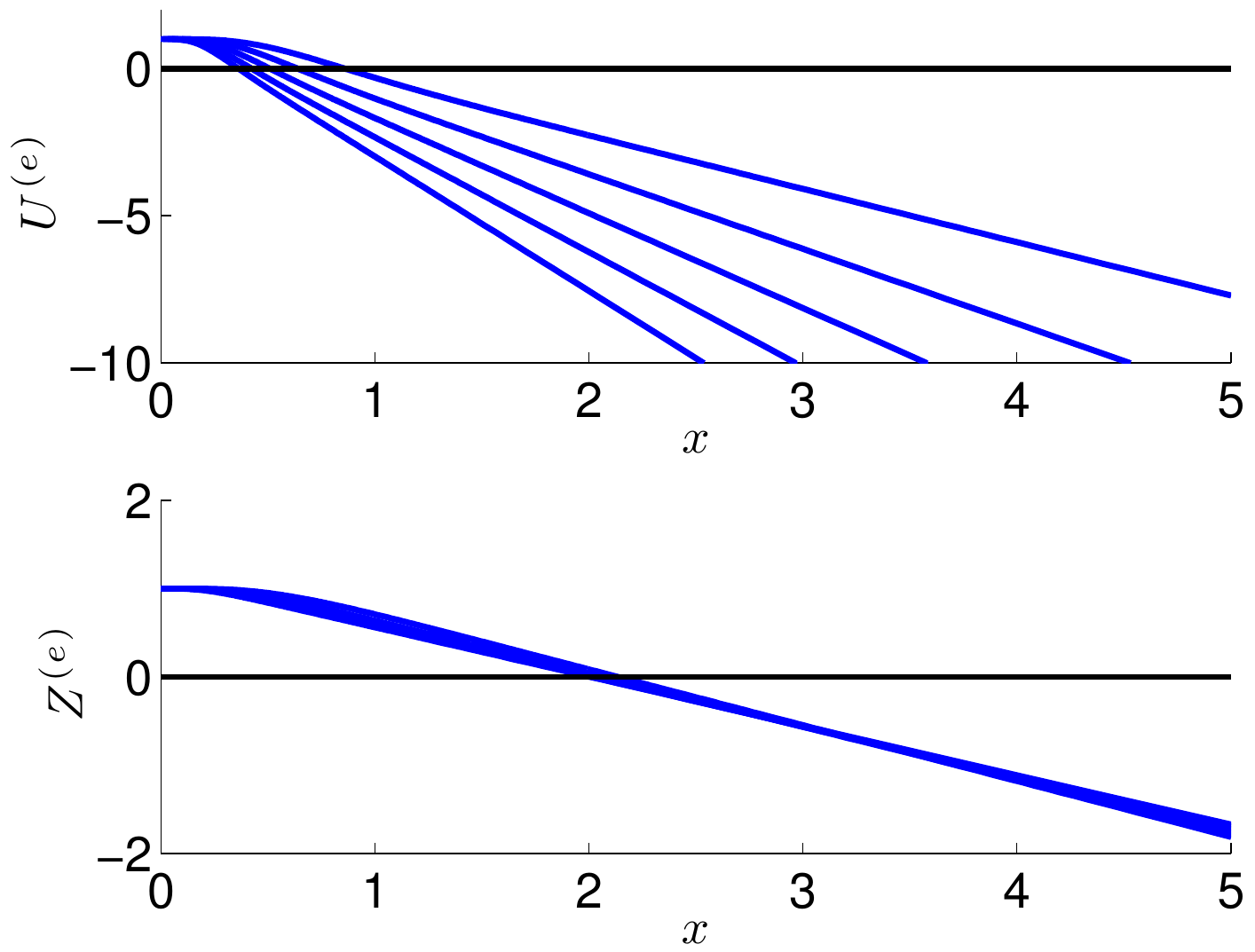}}
  \subfigure[Odd
  Functions]{\includegraphics[width=2.4in]{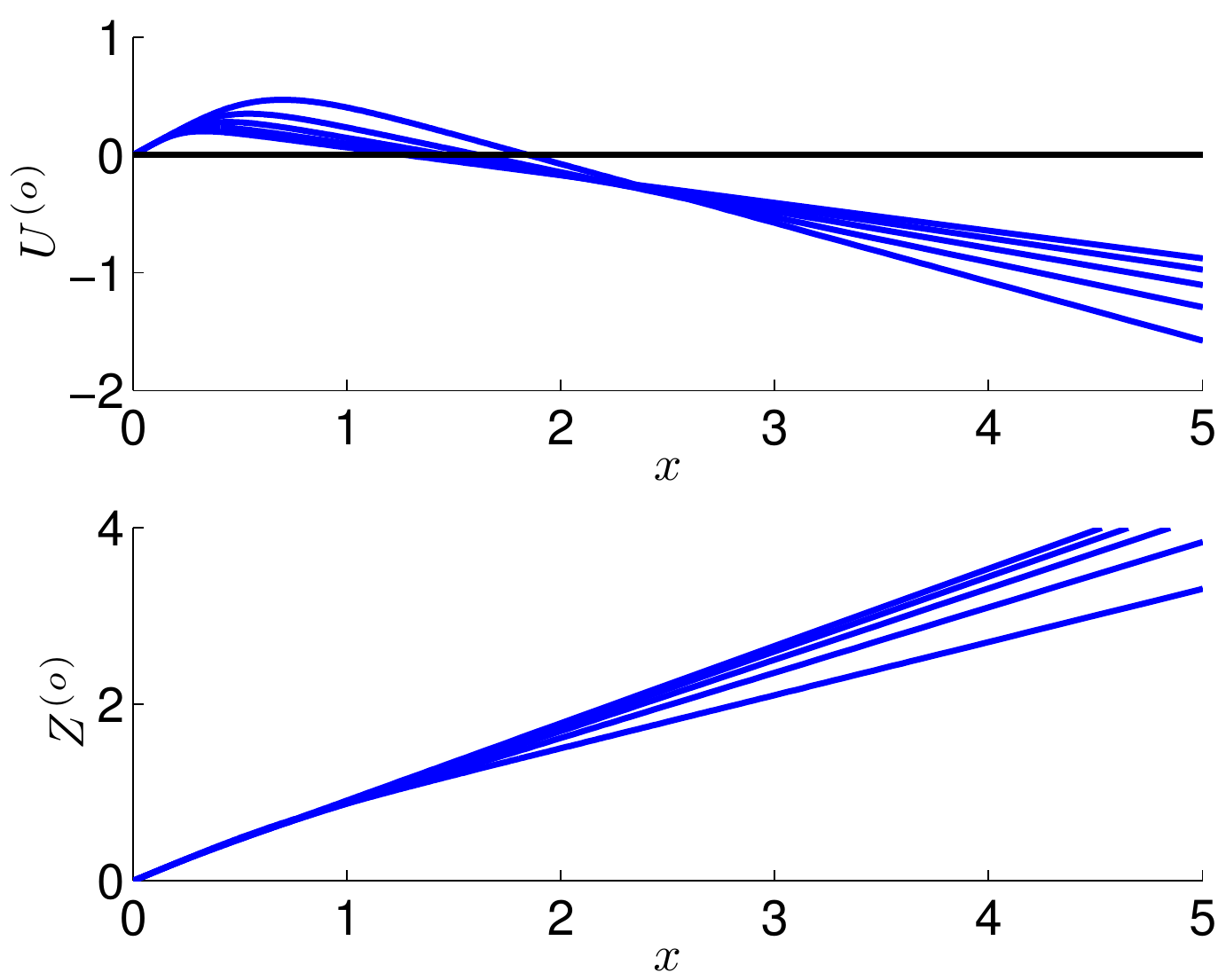}}
  \caption{Index functions for the 1D supercritical NLS equation
    computed at five values of $2.3\leq\sigma\leq 6.3$, inclusive.}
  \label{f:nls1d_idx}
\end{figure}

\subsubsection{Inner Products}
\label{s:nls1d_ip}

\begin{prop}
  \label{p:nls1d_ke}
  Let $U_1^{(0)}$ and $U_2^{(0)}$, solve
  \begin{subequations}
    \begin{align}
      \calL_+^{(e)} U_1^{(e)} = R \\
      \calL_+^{(e)} U_2^{(e)} = \phi_2
    \end{align}
  \end{subequations}
  and define:
  \begin{subequations}
    \label{e:nls1d_ke_defs}
    \begin{align}
      K_1^{(e)} &\equiv  \inner{\calL_+^{(e)} U_1^{(e)}}{U_1^{(e)}},\\
      K_2^{(e)} &\equiv \inner{\calL_+^{(e)} U_2^{(e)}}{U_2^{(e)}}.
    \end{align}
  \end{subequations}
  The $K_j^{(e)}$ have the values indicated in Figure
  \ref{f:nls1d_k_e_var}.  Moreover,
  \begin{align*}
    K_1^{(e)} &<0, \quad \text{for $2.3\leq \sigma<  \sigma_4$}\\
    K_2^{(e)}& <0, \quad \text{for $2.3\leq\sigma \leq 6.3$}
  \end{align*}
  where
  \begin{equation}
    \label{e:nls1d_sigma4}
    \sigma_4 = 3.49928679909
  \end{equation}
\end{prop}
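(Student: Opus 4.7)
The plan is to follow the same template used in the proofs of Propositions \ref{p:nls3d_k0}, \ref{p:nls3d_j0}, and \ref{p:cqnls_k0}, adapting it to the 1D setting. First, I would invoke Proposition \ref{p:1d_inverse} to establish existence and uniqueness of $U_1^{(e)}, U_2^{(e)} \in L^\infty(\R) \cap C^2(\R)$. This applies because both $R$ and the unstable eigenstate component $\phi_2$ are even, smooth, and exponentially decaying, and because Proposition \ref{p:nls1d_idx} ($\ind(\calB_+^{(e)}) = 1$) supplies the spectral information that Proposition \ref{p:1d_inverse} relies on for uniqueness; the evenness of the data then forces $U_j^{(e)}$ to be even, so each $U_j^{(e)}$ lies in the same symmetry class in which the index was computed.

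Next, I would numerically compute the solutions and the associated inner products using the procedure of Appendix \ref{s:numerics}. Specifically, I would sample $\sigma$ on a uniform grid in $[2.3, 6.3]$---mirroring the 41-point grids used in the 3D propositions---solve the two BVPs at each sample, and evaluate $K_1^{(e)}$ and $K_2^{(e)}$ by quadrature. Continuity of $R(\cdot;\sigma)$, $\phi_2(\cdot;\sigma)$, and of $U_j^{(e)}(\cdot;\sigma)$ in $\sigma$, together with a uniform distance from zero of $K_2^{(e)}$ over the grid, extends the sign claim $K_2^{(e)} < 0$ from the sample points to all of $[2.3, 6.3]$. For the threshold $\sigma_4$, I would bracket the single sign change of $K_1^{(e)}$ using the coarse grid data and then refine with the root-finding approach of Appendix \ref{s:roots} to produce the quoted digits in \eqref{e:nls1d_sigma4}.

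The main obstacle I anticipate is numerical rather than analytical. Proposition \ref{p:1d_inverse} only guarantees $U_j^{(e)} \in L^\infty$, in contrast to the algebraic decay $U \propto r^{-1-k}$ supplied by Proposition \ref{p:3d_inverse} in 3D; the integrands $R\,U_j^{(e)}$ and $\phi_2\,U_j^{(e)}$ that define $K_1^{(e)}$ and $K_2^{(e)}$ therefore decay only as fast as $R$ and $\phi_2$ themselves, so domain truncation and quadrature must be controlled carefully, especially near $\sigma_4$, where $K_1^{(e)}$ is small. Away from that neighborhood, the computed values in Figure \ref{f:nls1d_k_e_var} sit comfortably away from zero, which supplies the quantitative margin needed to conclude the stated sign properties with confidence.
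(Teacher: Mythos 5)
Your proposal matches the paper's approach: the paper likewise relies on Proposition \ref{p:1d_inverse} for solvability of the boundary value problems and then establishes the signs of $K_1^{(e)}$ and $K_2^{(e)}$ by direct numerical computation on a grid of $\sigma$ values, with the threshold $\sigma_4$ obtained via the root-finding procedure of Appendix \ref{s:roots} and the extension to the full interval argued by continuity. Your added caution about the slower decay of the 1D solutions (merely $L^\infty$ rather than algebraically decaying) is a reasonable observation about the numerics but does not change the argument.
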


\begin{figure}
  \subfigure[]{\includegraphics[width=2.4in]{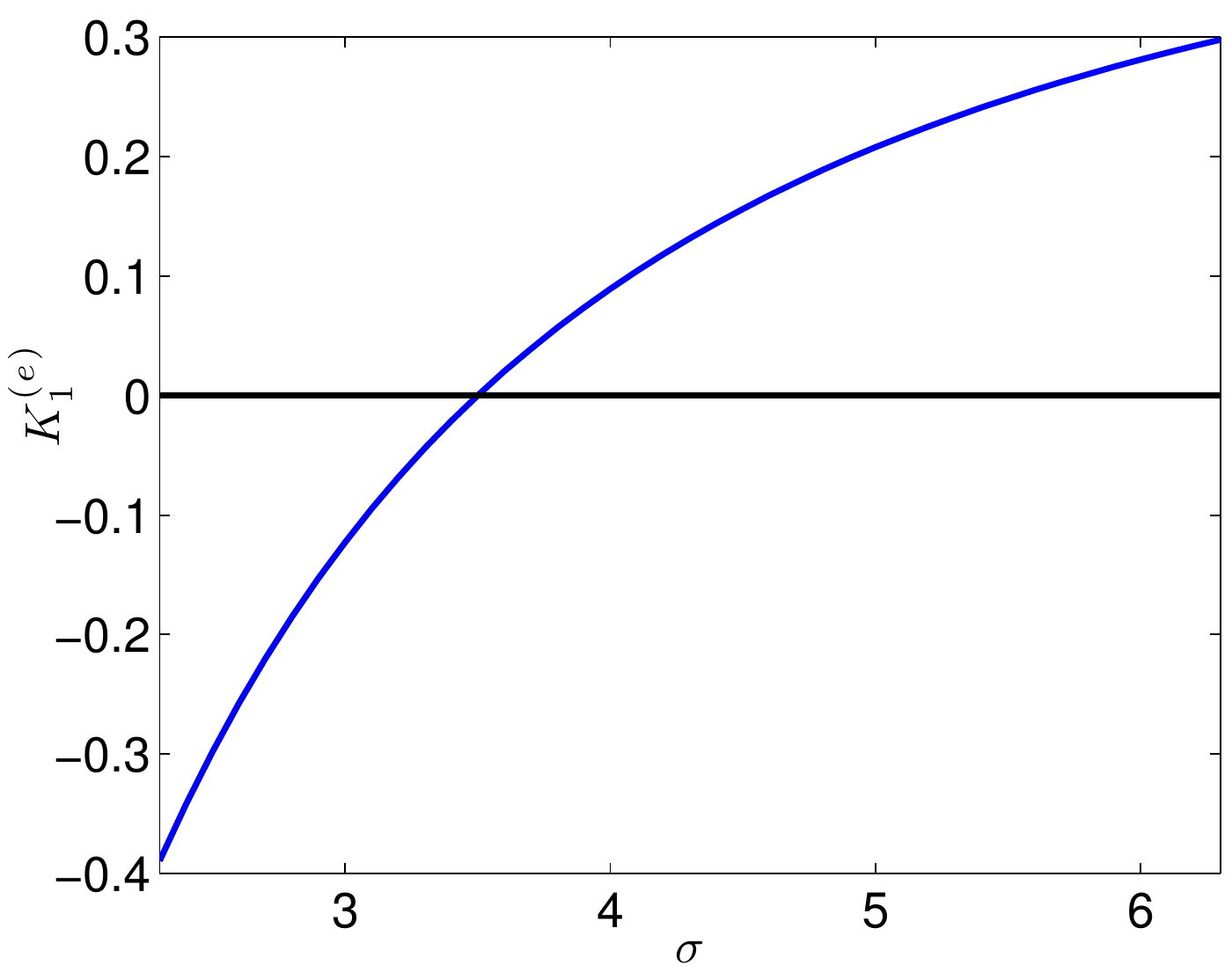}}
  \subfigure[]{\includegraphics[width=2.4in]{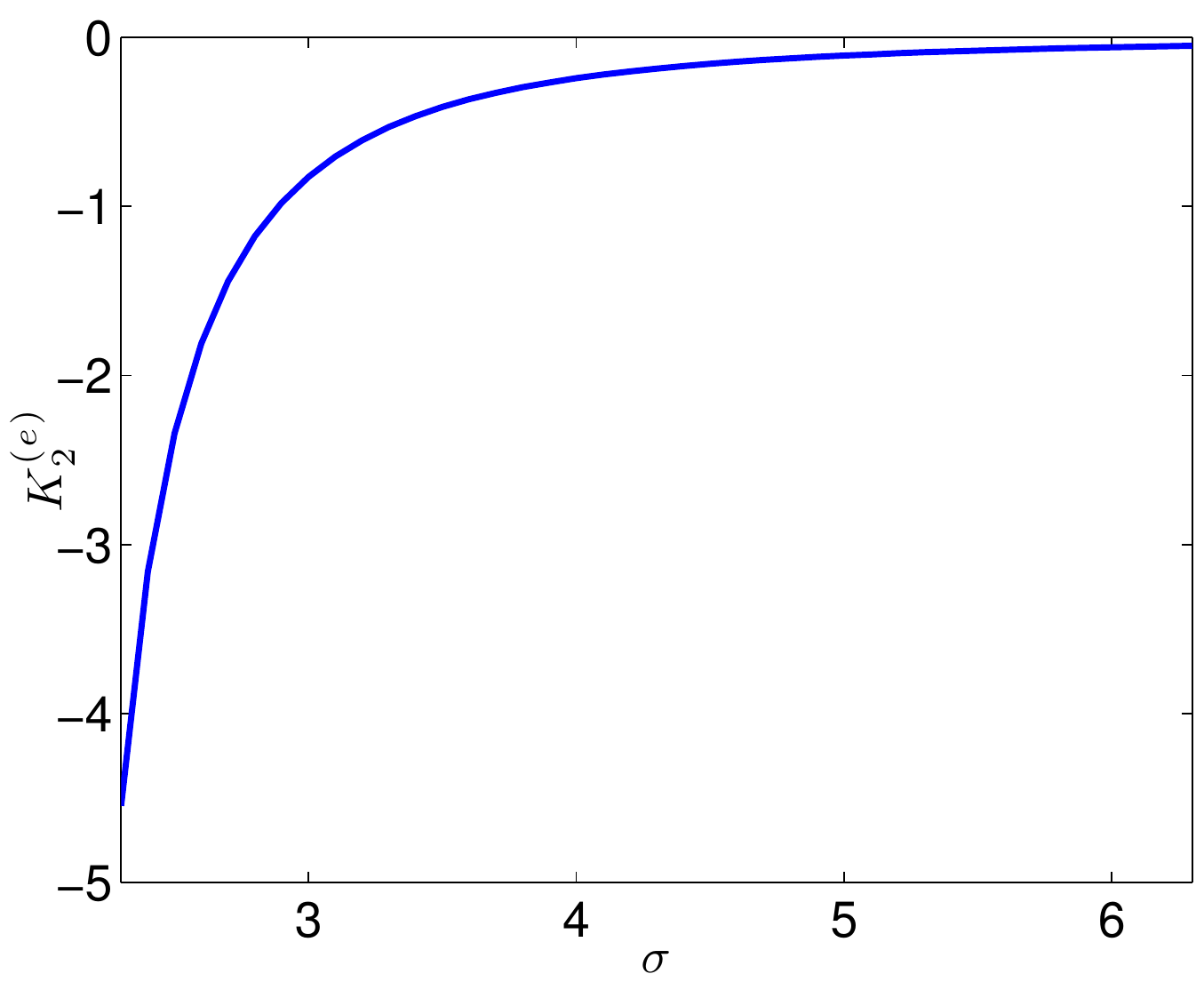}}
  \caption{The $K_j^{(e)}$ as functions of $\sigma$ for 1D
    supercritical NLS.}
  \label{f:nls1d_k_e_var}
\end{figure}

\begin{prop}
  \label{p:nls1d_je}
  Let $Z_1^{(e)}$ and $Z_2^{(e)}$, solve
  \begin{subequations}
    \begin{align}
      \calL_-^{(e)} Z_1^{(e)} &= \tfrac{1}{\sigma}R + x R' \\
      \calL_-^{(e)} Z_2^{(e)} &= \phi_1
    \end{align}
  \end{subequations}
  and define:
  \begin{subequations}
    \label{e:nls1d_je_defs}
    \begin{align}
      J_1^{(e)} &\equiv  \inner{\calL_-^{(e)} Z_1^{(e)}}{Z_1^{(e)}},\\
      J_2^{(e)} &\equiv  \inner{\calL_-^{(e)} Z_2^{(e)}}{Z_2^{(e)}},\\
      J_3^{(e)} &\equiv \inner{\calL_-^{(e)} Z_2^{(e)}}{Z_1^{(e)}}.
    \end{align}
  \end{subequations}
  The $J_j^{(0)}$ have the values indicated in Figure
  \ref{f:nls1d_j_e_var}.  Moreover,
  \begin{subequations}
    \begin{align}
      J_1^{(e)} &<0, \quad \text{for $\sigma_5\leq\sigma \leq 6.3$},\\
      \label{e:nls1d_je_prod2_def}
      (J_1^{(e)} J_2^{(e)} - (J_3^{(e)})^2)/J_2^{(e)}&<0,\quad
      \text{for $\sigma_6\leq\sigma \leq 6.3$},
    \end{align}
  \end{subequations}
  where
  \begin{subequations}
    \begin{align}
      \label{e:nls1d_sigma5}
      \sigma_5 & = 2.45649878,\\
      \label{e:nls1d_sigma6}
      \sigma_6 & = 2.45379561.
    \end{align}
  \end{subequations}
\end{prop}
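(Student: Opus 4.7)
The plan is to mirror the proofs of Propositions \ref{p:nls3d_j0} and \ref{p:nls1d_ke}. As a first step I would invoke Proposition \ref{p:1d_inverse} to guarantee that $Z_1^{(e)}$ and $Z_2^{(e)}$ are well defined. The two source terms $\tfrac{1}{\sigma} R + x R'$ and $\phi_1$ are smooth, even, and exponentially decaying (the former because $R$ is an exponentially decaying soliton, the latter because $\boldsymbol{\phi}$ is an $L^2$ eigenstate of $JL$ at $\mu_\star$, which sits off the essential spectrum). Uniqueness in $L^\infty(\R)\cap C^2(\R)$ requires that the relevant index information be already available, which is precisely what Proposition \ref{p:nls1d_idx} supplies for the even sector of $\calL_-$.

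Second, I would evaluate the three inner products $J_1^{(e)}, J_2^{(e)}, J_3^{(e)}$ on a uniformly spaced grid of $\sigma$ values covering $[2.3, 6.3]$, using the boundary value problem solver and quadrature routines described in Appendix \ref{s:numerics}. This produces the curves plotted in Figure \ref{f:nls1d_j_e_var} and, by combining them, the plot of $(J_1^{(e)} J_2^{(e)} - (J_3^{(e)})^2)/J_2^{(e)}$. From the data one sees that $J_1^{(e)}$ has a single sign change in the interval, while the combination in \eqref{e:nls1d_je_prod2_def} also changes sign exactly once. To obtain $\sigma_5$ and $\sigma_6$ to the precision displayed in \eqref{e:nls1d_sigma5}--\eqref{e:nls1d_sigma6}, I would then apply the root-finding procedure described in Appendix \ref{s:roots} to the scalar functions $\sigma \mapsto J_1^{(e)}(\sigma)$ and $\sigma \mapsto (J_1^{(e)} J_2^{(e)} - (J_3^{(e)})^2)/J_2^{(e)}(\sigma)$, and verify that $\sigma_6 < \sigma_5$ so that the effective threshold in the final Spectral Property proof is $\sigma_6$.

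The main obstacle is not conceptual but numerical. Three pieces must be controlled uniformly in $\sigma$ over a comparatively long interval: the computed profile $R(\cdot;1)$; the unstable eigenstate $\boldsymbol{\phi}$, which depends nontrivially on $\sigma$ through $\mu_\star$ and so must be recomputed accurately at each sample point; and the domain truncation and quadrature used to evaluate the inner products. In addition, one needs enough sampling density, together with monitoring of derivatives in $\sigma$, to certify that no unobserved sign changes of $J_1^{(e)}$ or of the combination occur between adjacent grid points. These issues were handled identically in the 3D propositions, so the same strategy carries over; the wider $\sigma$-range here merely demands extra care that the soliton and eigenstate computations remain well resolved at the endpoints.
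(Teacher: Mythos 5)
Your proposal matches the paper's (computer-assisted) argument: existence and uniqueness of $Z_1^{(e)}$, $Z_2^{(e)}$ follow from Proposition \ref{p:1d_inverse} together with the index information of Proposition \ref{p:nls1d_idx}, the inner products are evaluated numerically on a grid of $\sigma$ values in $[2.3,6.3]$ as in Propositions \ref{p:nls3d_k0} and \ref{p:nls3d_j0}, and the thresholds $\sigma_5$, $\sigma_6$ are located with the root-finding procedure of Appendix \ref{s:roots}. This is essentially the same approach as the paper, so no further comment is needed.
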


\begin{figure}
  \subfigure[]{\includegraphics[width=2.4in]{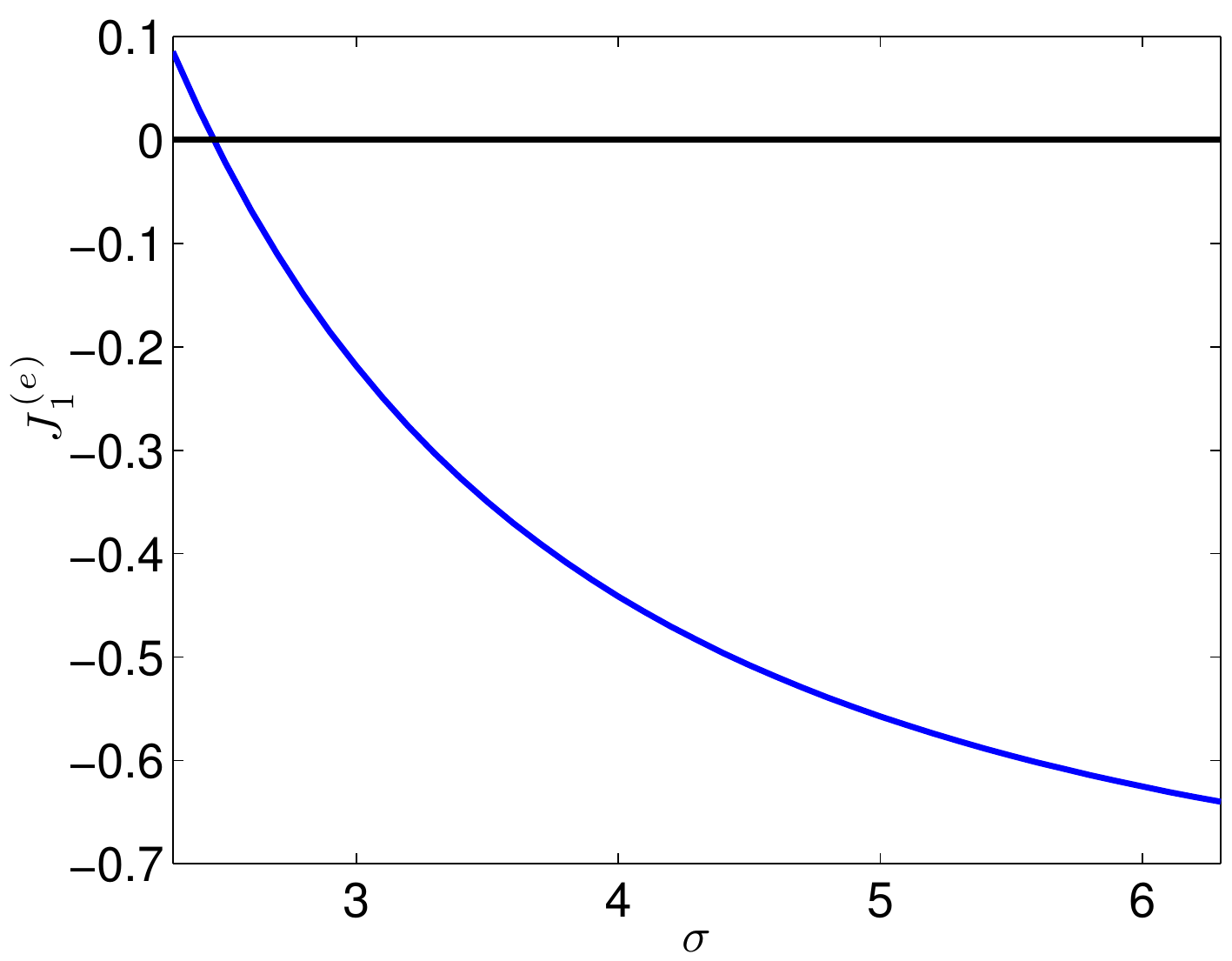}}
  \subfigure[]{\includegraphics[width=2.4in]{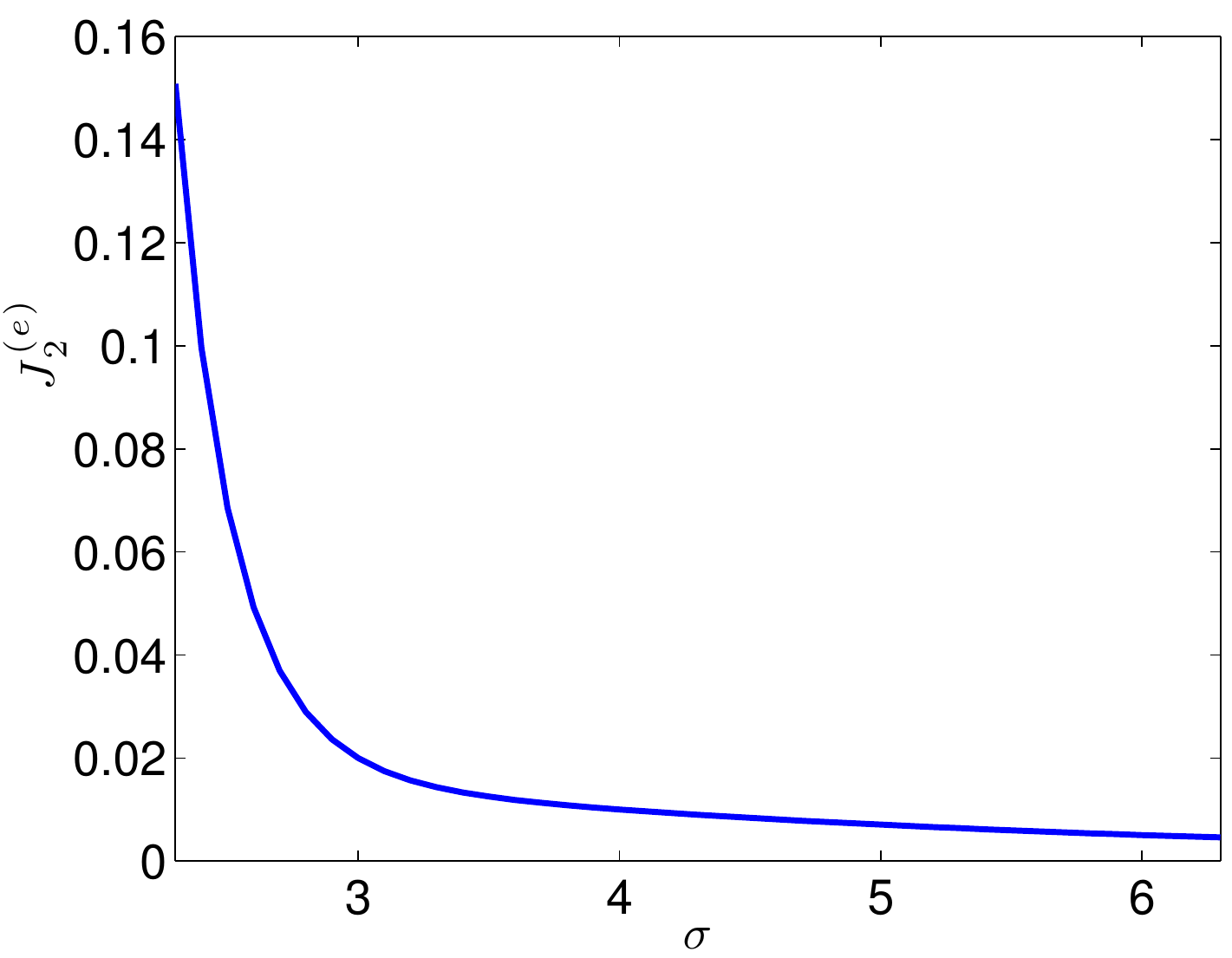}}
  \subfigure[]{\includegraphics[width=2.4in]{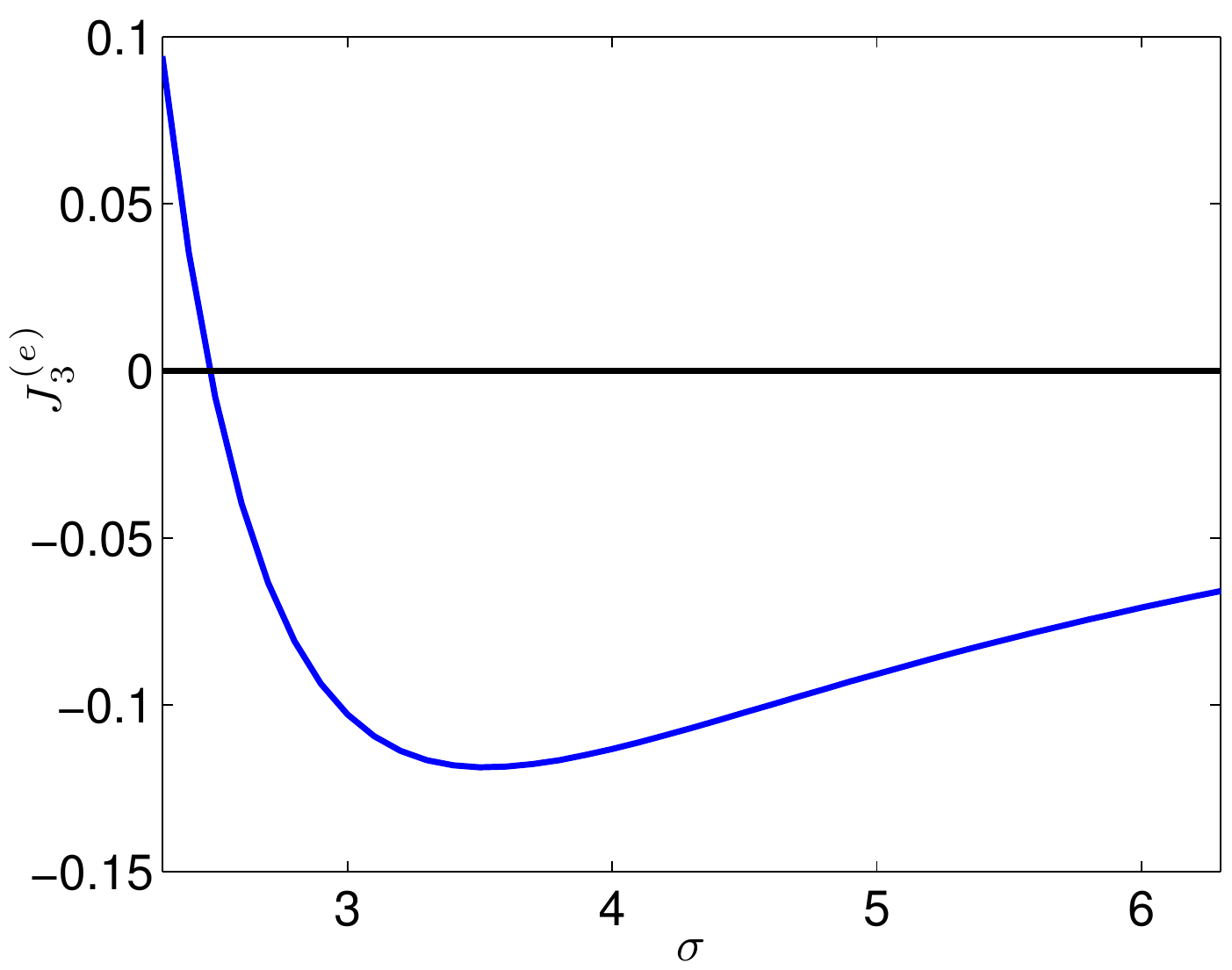}}
  \caption{\eqref{e:nls1d_je_defs} as functions of $\sigma$ for 1D
    supercritical NLS.}
  \label{f:nls1d_j_e_var}
\end{figure}

\begin{figure}
  \includegraphics[width=2.4in]{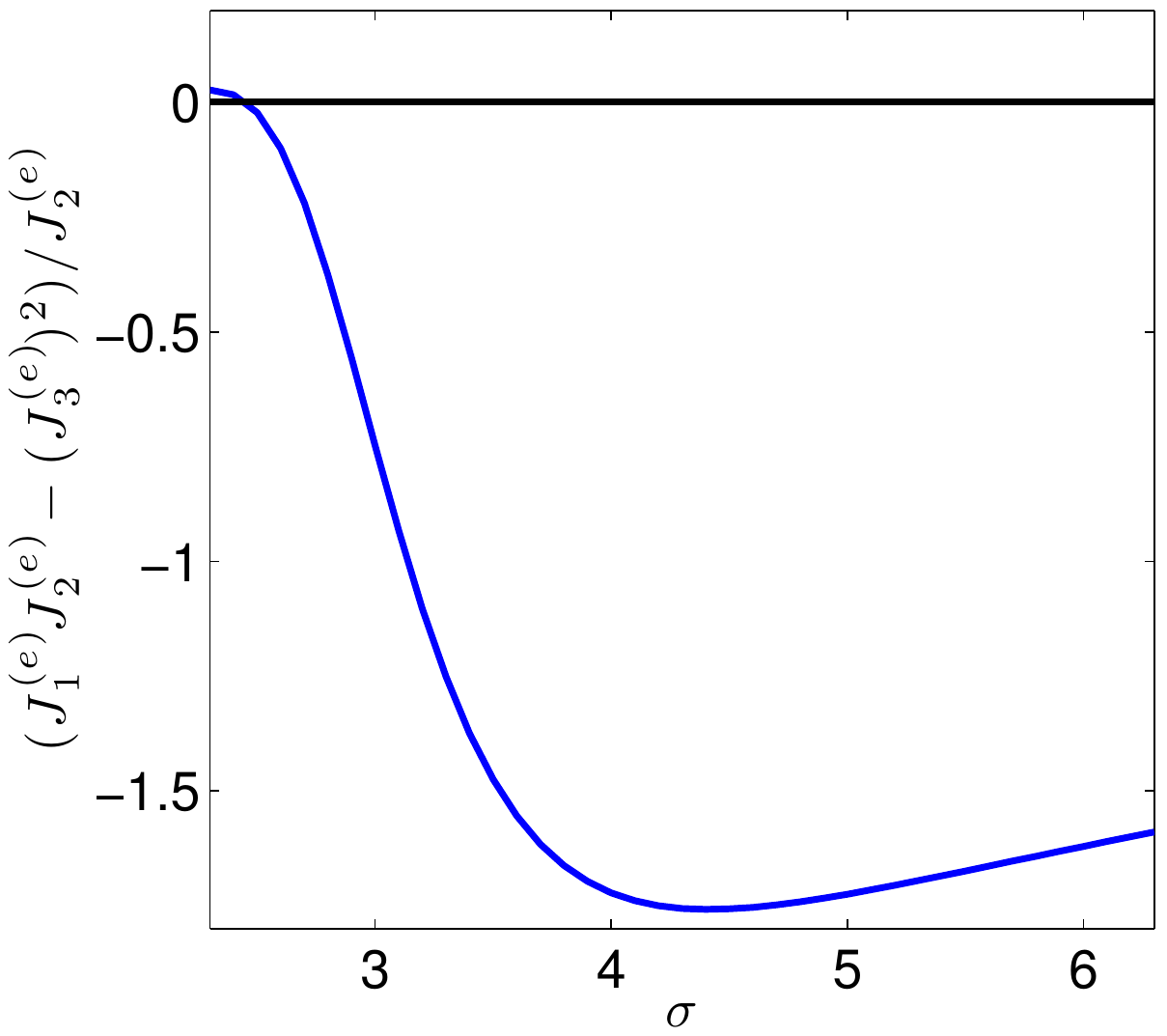}
  \caption{\eqref{e:nls1d_je_prod2_def} as a function of $\sigma$ for
    1D supercritical NLS.}
  \label{f:nls1d_jprod2_e_var}
\end{figure}

\begin{prop}
  \label{p:nls1d_ko}
  Let $U_1^{(o)}$ solve

    \begin{equation}
      \calL_+^{(o)} U_1^{(o)} =  x R
    \end{equation}
    and define
    \begin{equation}
      \label{e:nls1d_ko_defs}
      K_1^{(o)} \equiv  \inner{\calL_+^{(o)} U_1^{(o)}}{U_1^{(o)}}.
    \end{equation}
    $K_1^{(o)}$ has the values indicated in Figure
    \ref{f:nls1d_k_o_var}.  Moreover,
    \begin{equation*}
      K_1^{(o)}  <0, \quad \text{for $2.3\leq\sigma \leq \sigma_7$}\\
    \end{equation*}
    where
    \begin{equation}
      \label{e:nls1d_sigma7}
      \sigma_7 = 6.1288520139
    \end{equation}
  \end{prop}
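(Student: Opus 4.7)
The plan is to follow verbatim the template established in Propositions \ref{p:nls1d_ke} and \ref{p:nls1d_je}. First I would verify the hypotheses needed to make $U_1^{(o)}$ and $K_1^{(o)}$ well defined. The soliton $R$ is smooth, even, and exponentially decaying (a standard consequence of the ODE \eqref{e:soliton} with $\omega=1$ and $f(s)=s^\sigma$), so $xR$ is smooth, odd, and satisfies $\abs{xR(x)}\lesssim e^{-\kappa\abs{x}}$ for some $\kappa>0$. Proposition \ref{p:1d_inverse} then produces a unique solution $U_1^{(o)} \in L^\infty(\R)\cap C^2(\R)$ of $\calL_+^{(o)}U_1^{(o)}=xR$, and the symmetry clause of that proposition forces $U_1^{(o)}$ to be odd. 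Applicability of Proposition \ref{p:1d_inverse} rests on the index computation of Proposition \ref{p:nls1d_idx}, which gives $\ind(\calB_+^{(o)})=1$, guaranteeing invertibility of $\calL_+^{(o)}$ on the complement of its one-dimensional negative subspace; I would check (as in the analogous even-sector case) that $xR$ lies in that complement, so that the integral defining $K_1^{(o)}$ is absolutely convergent (the integrand $xR\cdot U_1^{(o)}$ decays exponentially because $xR$ does and $U_1^{(o)}$ is bounded).

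Next, I would carry out the numerical step, exactly parallel to the proofs of Propositions \ref{p:nls1d_ke} and \ref{p:nls1d_je}: at a uniform grid of values of $\sigma\in[2.3,6.3]$, compute $R$ using the numerical solver described in Appendix \ref{s:numerics}, then solve $\calL_+^{(o)}U_1^{(o)}=xR$ by the same boundary-value-problem routine used there, and evaluate $K_1^{(o)}=\inner{xR}{U_1^{(o)}}$ via numerical quadrature. The plotted values should match Figure \ref{f:nls1d_k_o_var}, which shows $K_1^{(o)}<0$ throughout most of the interval with a single sign change near $\sigma\approx 6.13$.

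To pin down $\sigma_7$, I would feed $K_1^{(o)}(\sigma)$ into the root-finding procedure of Appendix \ref{s:roots}, which brackets and refines the zero crossing to the stated precision, yielding $\sigma_7=6.1288520139$. A continuity argument then fills in the rest of the interval: since $R(\cdot;1)$, $\phi_2$, and hence the Green-function solution $U_1^{(o)}$ depend continuously on $\sigma$ (away from critical values of the index, which are avoided by Proposition \ref{p:nls1d_idx} on $[2.3,6.3]$), the scalar $K_1^{(o)}$ is a continuous function of $\sigma$, so its sign on $[2.3,\sigma_7]$ is determined by its values at the computed samples together with the location of the unique zero at $\sigma_7$.

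The main obstacle, as with the preceding propositions, is not analytical but computational: the entire argument hinges on trusting the numerical solution of the one-dimensional ODEs, the quadrature of the resulting inner product, and the bracketing of the zero near $\sigma_7$. The delicate region is near the endpoint $\sigma\approx 6.13$, where $K_1^{(o)}$ is small in magnitude, so sharp error control on $R$, on $U_1^{(o)}$, and on the quadrature is essential to rigorously exclude a sign flip inside $[2.3,\sigma_7)$; this is exactly the same numerical caveat acknowledged throughout Section \ref{s:results} and deferred to Appendix \ref{s:numerics}.
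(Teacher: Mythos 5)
Your proposal matches the paper's (implicit) argument exactly: the paper proves this proposition the same way as Propositions \ref{p:nls1d_ke} and \ref{p:nls1d_je}, namely by invoking Proposition \ref{p:1d_inverse} for existence of $U_1^{(o)}$, numerically solving the boundary value problem and evaluating the inner product at sampled values of $\sigma$, locating $\sigma_7$ with the root finder of Appendix \ref{s:roots}, and interpolating by continuity. The only nitpick is that $\phi_2$ plays no role in the odd sector, but that does not affect the argument.
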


  \begin{figure}
    \includegraphics[width=2.4in]{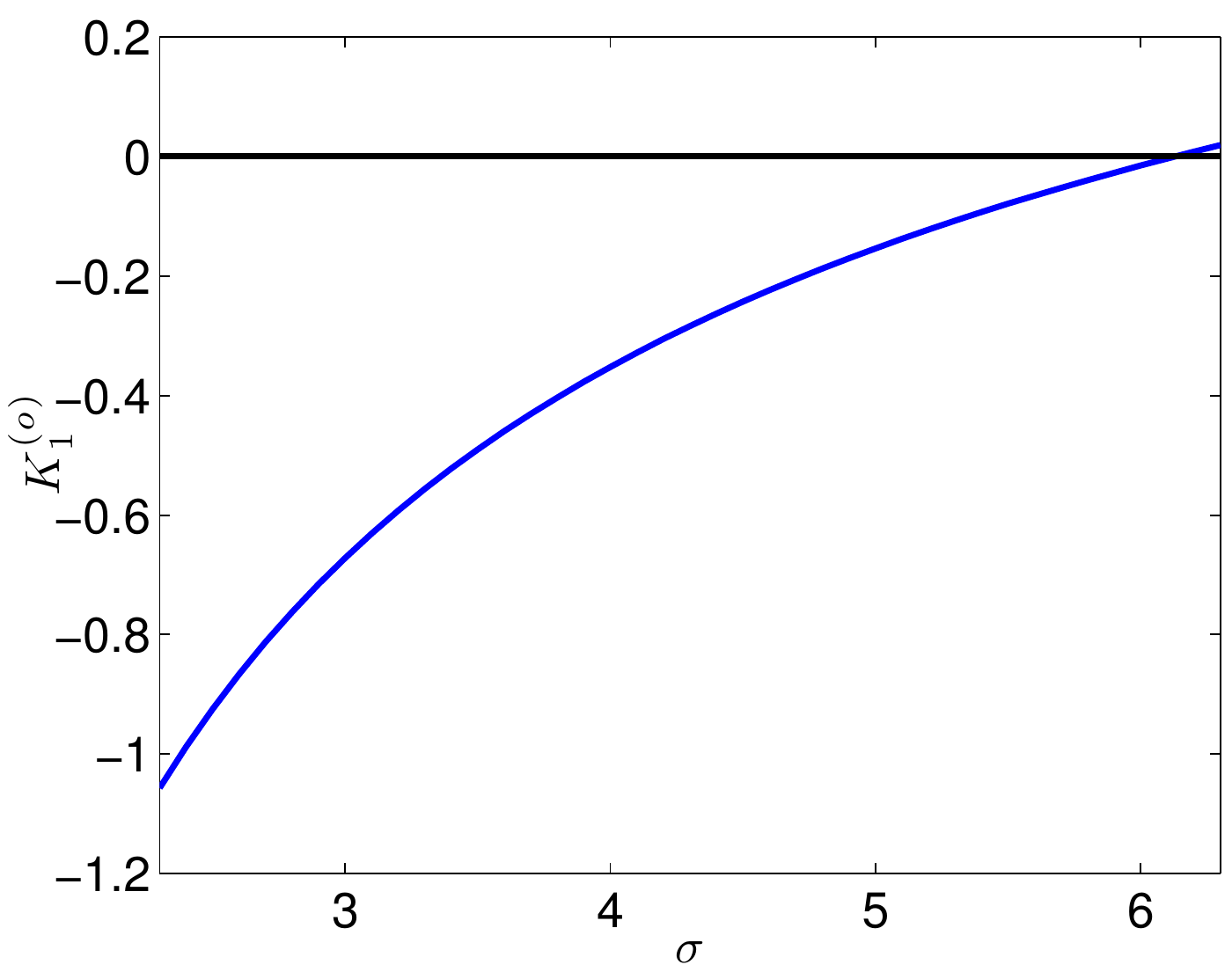}
    \caption{$K_1^{(1)}$ as a function of $\sigma$ for 1D
      supercritical NLS.}
    \label{f:nls1d_k_o_var}
  \end{figure}

  \subsubsection{Proof of the Spectral Property}
  To prove the spectral property in this case, we can use the
  orthogonality of $f$ to $\phi_2$ to induce positivity of
  $\calB_+^{(e)}$ for all $2.3\leq \sigma \leq 6.3$, via Proposition
  \ref{p:nls1d_ke}. For $\calB_-^{(e)}$, $L^2$ orthogonality to
  $\tfrac{1}{\sigma}R + xR'$ and $\phi_1$ induces positivity for
  $\sigma_6< \sigma \leq 6.3$ through Proposition \ref{p:nls1d_je}.
  Finally, Proposition \ref{p:nls1d_ko} yields positivity of
  $\calB_+^{(o)}$ for $2.3\leq \sigma < \sigma_7$.

  \section{Discussion}
  \label{s:discussion}

  We have successfully proven there are no purely imaginary
  eigenvalues, both in the spectral gap and embedded in the essential
  spectrum, for a collection of a orbitally unstable solitary wave
  solutions of NLS.  However, there is still much to be done.  First,
  the range of orbitally unstable solitary waves has not been
  exhausted.  We know there are no purely imaginary eigenvalues for
  $\sigma >2$ in the 1D NLS equation, and we anticipate that there are
  none for $\sigma > 2/3$ in 3D.  We expect similar results for the
  entire unstable branch of CQNLS.  Second, we have shown that the
  threshold found in \cite{DeSc}, given by \eqref{e:desc_bound}, is
  suboptimal; our range, \eqref{e:3dnls_bound}, is larger, but it too
  is likely suboptimal.

  This work also raises questions of what is satisfactory for a
  rigorous proof.  We relied on the computer in the following steps:
  \begin{itemize}
  \item Computing the solitary waves for the 3D, a nonlinear boundary
    value problem on an unbounded domain;
  \item Computing a finite number of index functions, linear initial
    value problems on unbounded domains;
  \item Solving a finite collection of linear elliptic problems on
    unbounded domains;
  \item Computing inner products on an unbounded intervals.
  \end{itemize}
  To deal with these obstacles we truncated the intervals to $[0,
  \rmax]$ ($[0, \xmax]$ in 1D), and introduced a finite number of
  discretization points within the intervals.  Even with these
  computations, we have only established the results for isolated
  values of $\sigma$ or $\gamma$; we argue by continuity that the
  results should extend to continuous intervals.

  However, we are able to produce a variety of consistency and {\it a
    posteriori} checks, such as verifying that the artificial boundary
  conditions are being satisfied sufficiently well.  Despite our
  confidence in our computations, it would still be desirable to avoid
  the use of the computer.  Indeed, an alternative method may permit
  us to move beyond the seemingly artificial restrictions on $\sigma$
  and $\gamma$ of our results.

  \clearpage

  \appendix

  \section{Numerical Methods}
  \label{s:numerics}

  The codes used to produce the results appearing in this work are
  available at
  \url{http://www.math.toronto.edu/simpson/files/spec_prop_asad_simpson_code.zip}
  for examination, verification, and experimentation.

  The numerical methods we use are the same as those used in
  \cite{Marzuola:2010p5770}, and separately in Simpson \& Zwiers,
  \cite{Simpson:2010p8489}.  For the 3D problems, we use the Fortran
  90/95 based boundary value problem solver of Shampine, Muir, \& Xu,
  \cite{shampine2006user}.  For the 1D problems, we found it
  sufficient to use the closely related {\tt bvp4c} routine in
  \matlab, \cite{shampine2003singular,shampine2003solving}.  These
  algorithms require us to specify the problems as first order systems
  of the form
  \begin{equation*}
    \frac{d}{dr}{\bf y} = \tfrac{1}{r}S {\bf y} + {\bf f}(r, {\bf y})
  \end{equation*}
  where $S$ is a constant matrix and ${\bf f}$ contains all
  non-singular terms, both linear and nonlinear.

  This automatically accommodates problems involving $r^{-1}$ type
  singularities at the origin, as found in the equation for the 3D
  solitons and the $k=0$ harmonic boundary value problems.  For higher
  harmonics, we use a point transformation $W(r) = r^k \tilde{W} (r)$
  to transform
  \[
  \bracket{-\frac{d^2}{dr^2 }- \frac{d-1}{r} \frac{d}{dr} +
    \frac{k(k+d-2)}{r^2}}W\Rightarrow r^k \bracket{-\frac{d^2}{dr^2 }-
    \frac{d-1+2k }{r} \frac{d}{dr} } \tilde{W}.
  \]

  \subsection{Index Functions}
  \label{s:idx_numerics}
  For the 3D problems, we computed the index functions on the domain
  $[0, 200]$ with a tolerance setting of $10^{-13}$.  The 1D problems
  were computed on the domain $[0, 100]$ with absolute and relative
  tolerances of $10^{-13}$.  While Figures \ref{f:nls3d_idx},
  \ref{f:cqnls_idx}, and \ref{f:nls1d_idx} show the indicated numbers
  of roots, they obviously do not prove that there is no root at some
  larger value of $x$ or $r$.

  To mitigate this uncertainly, there are two things we can check.
  First, we can examine the solution on a much larger domain.  A more
  systematic approach is to note that since the potentials are highly
  localized, asymptotically, the index functions satisfy the estimates
  \begin{subequations}
    \begin{align}
      U^{(k)}(r) &\approx C_0^{(k)} r^k + C_1^{(k)} r^{2-d-k}\\
      Z^{(k)}(r) &\approx D_0^{(k)} r^k + D_1^{(k)} r^{2-d-k}
    \end{align}
  \end{subequations}
  where $C_j^{(k)}$ and $D_j^{(k)}$ are constants.  Estimating these
  constants, and seeing that they have the correct signs and
  magnitudes, indicates that there cannot be further roots.  For
  example, in Figure \ref{f:idx_example}, we see that
  $C_0^{(0)}\approx -0.3668$ and $C_1^{(0)}\approx -0.2393$.  The only
  root of
  \[
  -0.3668 -0.2393 /r
  \]
  is near $-0.65$.  Thus, there should be no zeros in the ``far
  field''.  Similar analysis works for the $D_j^{(0)}$ constants.

\begin{figure}
  \subfigure{\includegraphics[width=2.4in]{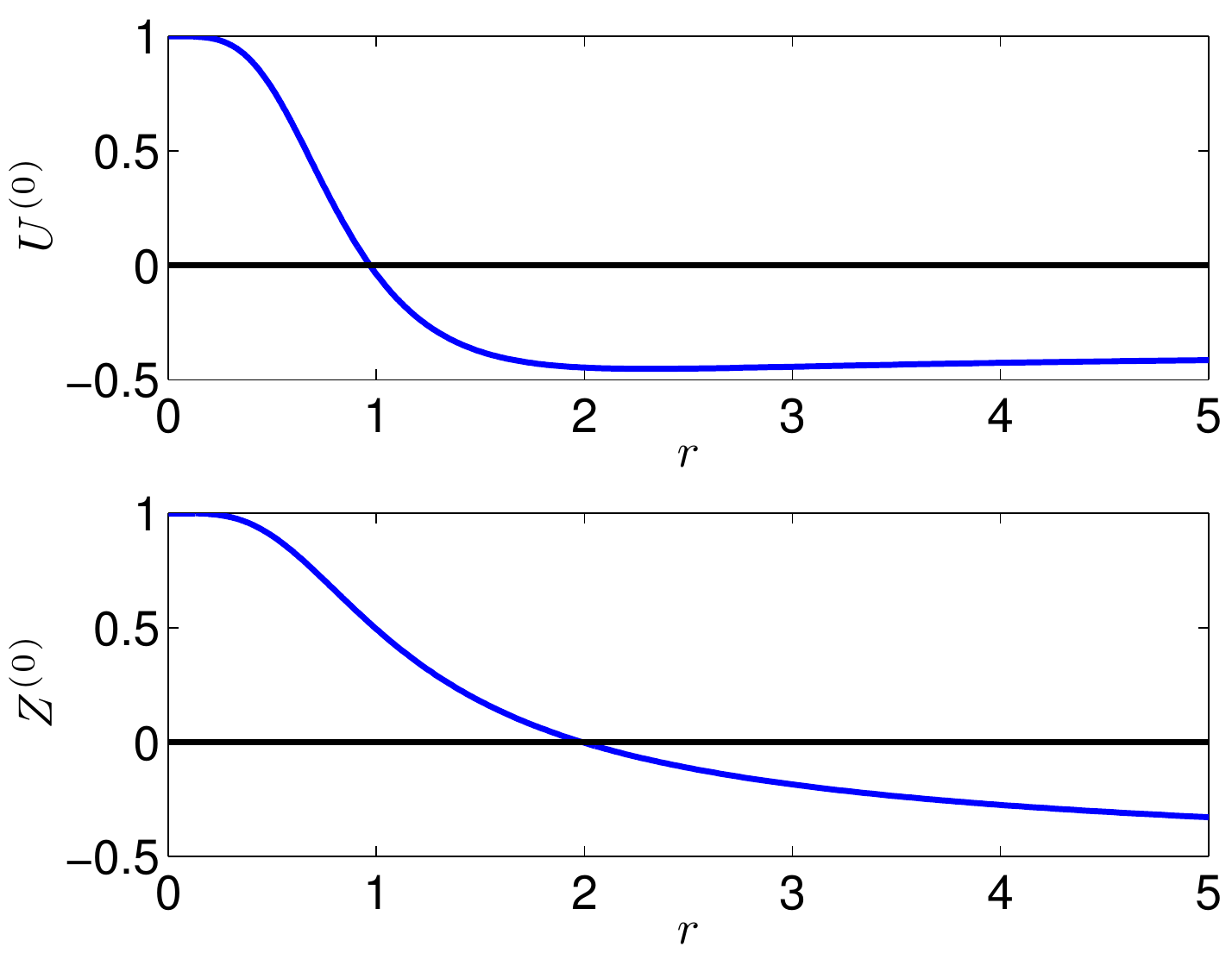}}
  \subfigure{\includegraphics[width=2.4in]{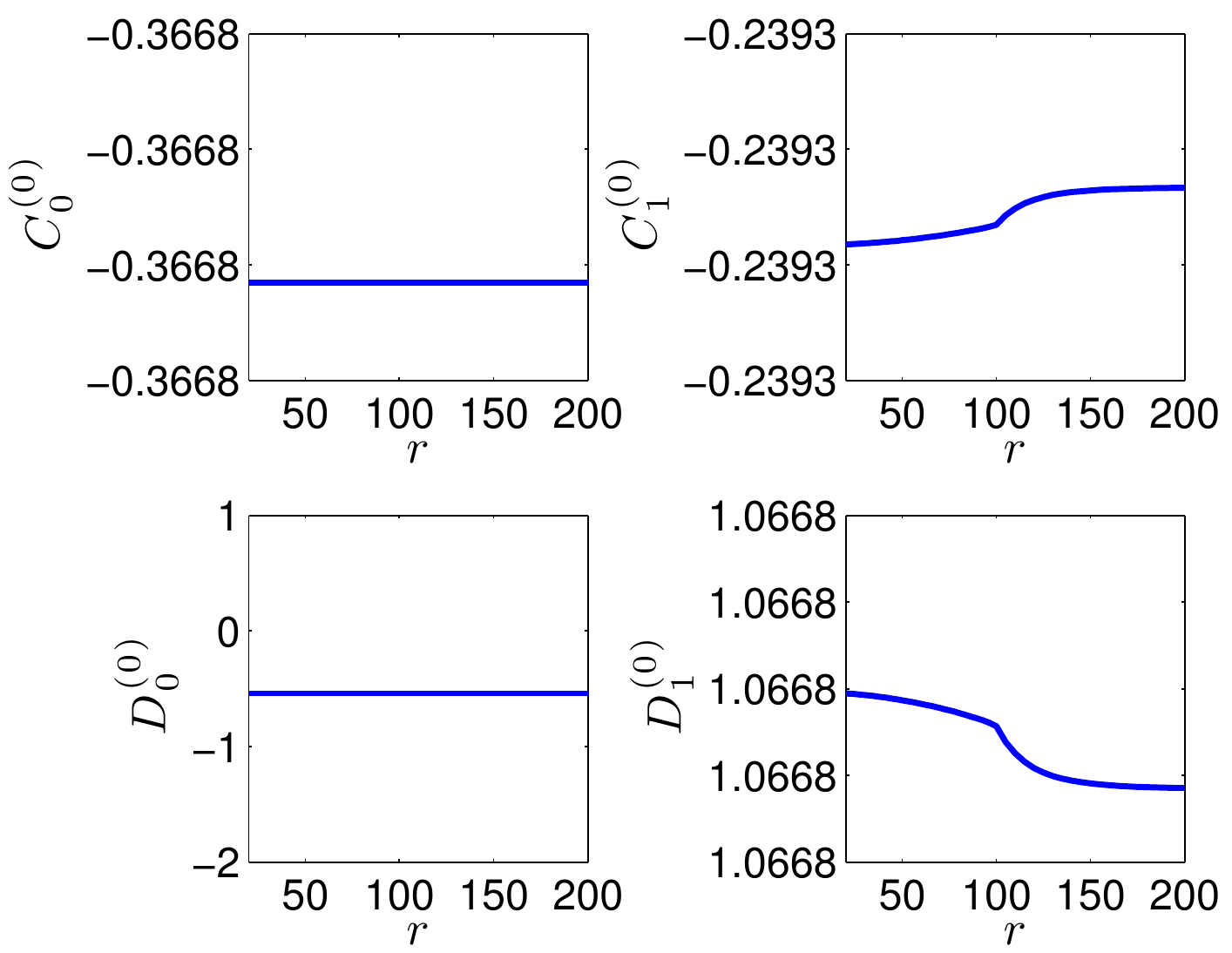}}
  \caption{The index functions and the asymptotic constants for the
    $\gamma = .01$ case of CQNLS in harmonic $k=0$.}
  \label{f:idx_example}
\end{figure}

\subsection{Boundary Value Problems \& Artificial Boundary Conditions}
\label{s:abcs}
Since this algorithm requires us to compute on a finite domain, $[0,
{\rmax}]$, we must introduce artificial boundary conditions at
$\rmax$.  These are discussed in the above references, but include
\begin{gather*}
  R(\rmax) +\frac{\rmax}{1+\rmax} R'(\rmax)=0\\
  \partial_\omega|_{\omega=1} R(\rmax) +\partial_\omega|_{\omega=1}
  R'(\rmax)=0
\end{gather*}
for the 3D problems.  Again, additional details are given in
\cite{Marzuola:2010p5770, Simpson:2010p8489}.  An example of an {\it a
  posteriori} check of these being satisfied is given in Figure
\ref{f:cqnls_absc}, which includes computations of the soliton, the
unstable eigenstate, and the solutions of the boundary value problems
for Proposition \ref{p:cqnls_k0}.

\begin{figure}
  \includegraphics[width=2.4in]{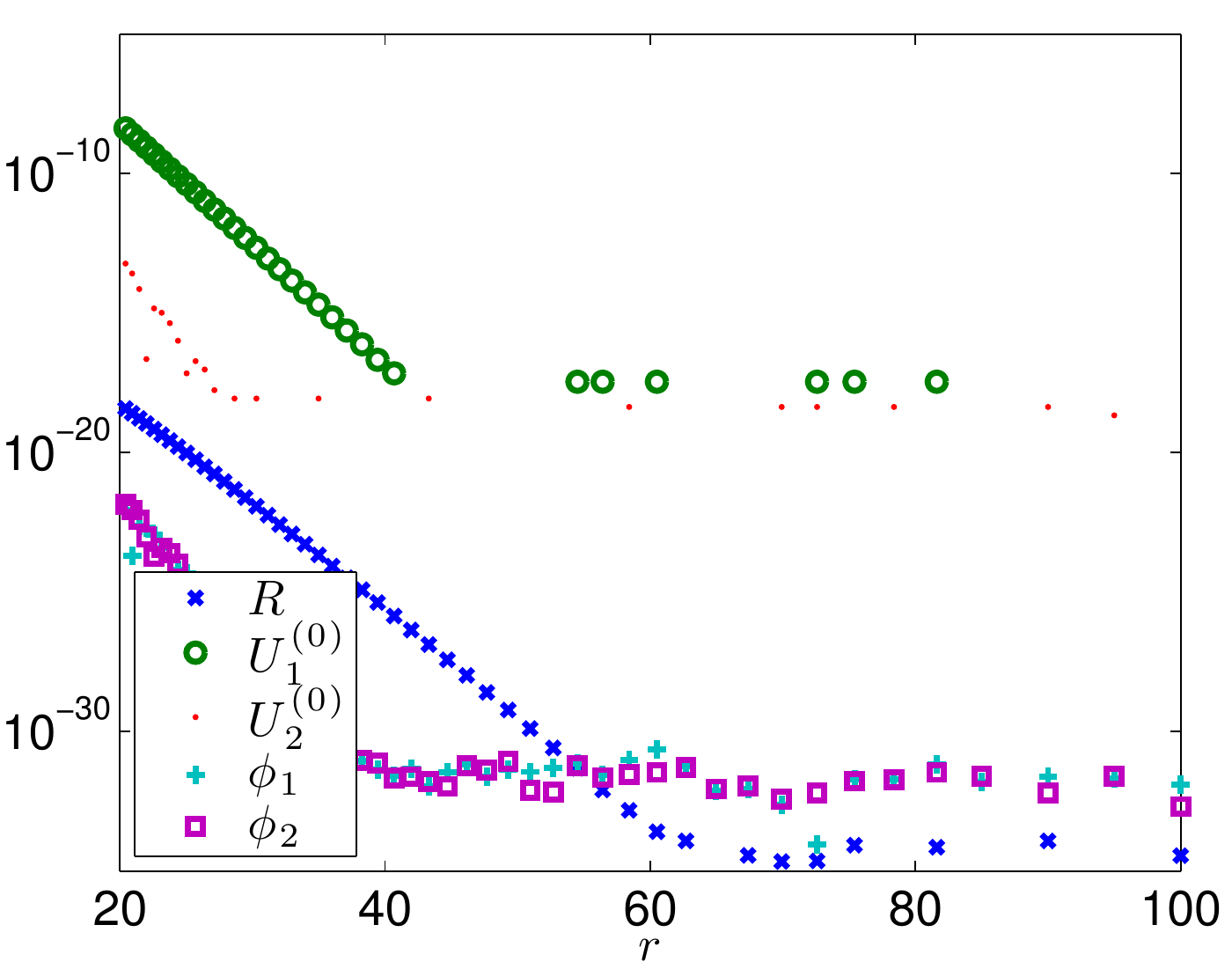}
  \caption{Verification of the artificial boundary conditions for the
    soliton, $R$, the eigenstate, $\boldsymbol{\phi}$, and the
    solutions of the boundary value problems $U^{(0)}_j$ for the
    $\gamma=0.01$ case of CQNLS.}
  \label{f:cqnls_absc}
\end{figure}

The boundary value problems were solved on the domain $[0, 100]$ in
all cases.  For the 3D problems, the tolerance was set at $10^{-12}$.
for the 1D problems, the absolute tolerance was $10^{-8}$ and relative
tolerance was $10^{-10}$.

\subsection{Root Finding}
\label{s:roots}
To compute roots of the inner products, or combinations of them, we
wrap our routines within the {\tt fzero} routine, with default
settings, of \matlab; we define a function that, for a given value of
$\sigma$ or $\gamma$, returns the inner product, and pass this to {\tt
  fzero}.  We found this could provide approximately six digits of
accuracy in our 3D computations, notably for the bounds
\eqref{e:3dnls_bound} and \eqref{e:cqnls:bound}, while for the 1D
computations, we were able to get eleven digits.

There appears to be a loss of sensitivity to the input $\sigma$ or
$\gamma$ for the 3D problems.  An example of this appears in Figure
\ref{f:nls3d_j0_slow}, where we see that for a range of $\sigma$
values near the root, $J_1^{(0)}$ is piecewise constant.  Refinements
to our approach, with higher order corrections to the artificial
boundary conditions, may improve the accuracy.

\begin{figure}
  \includegraphics[width=2.4in]{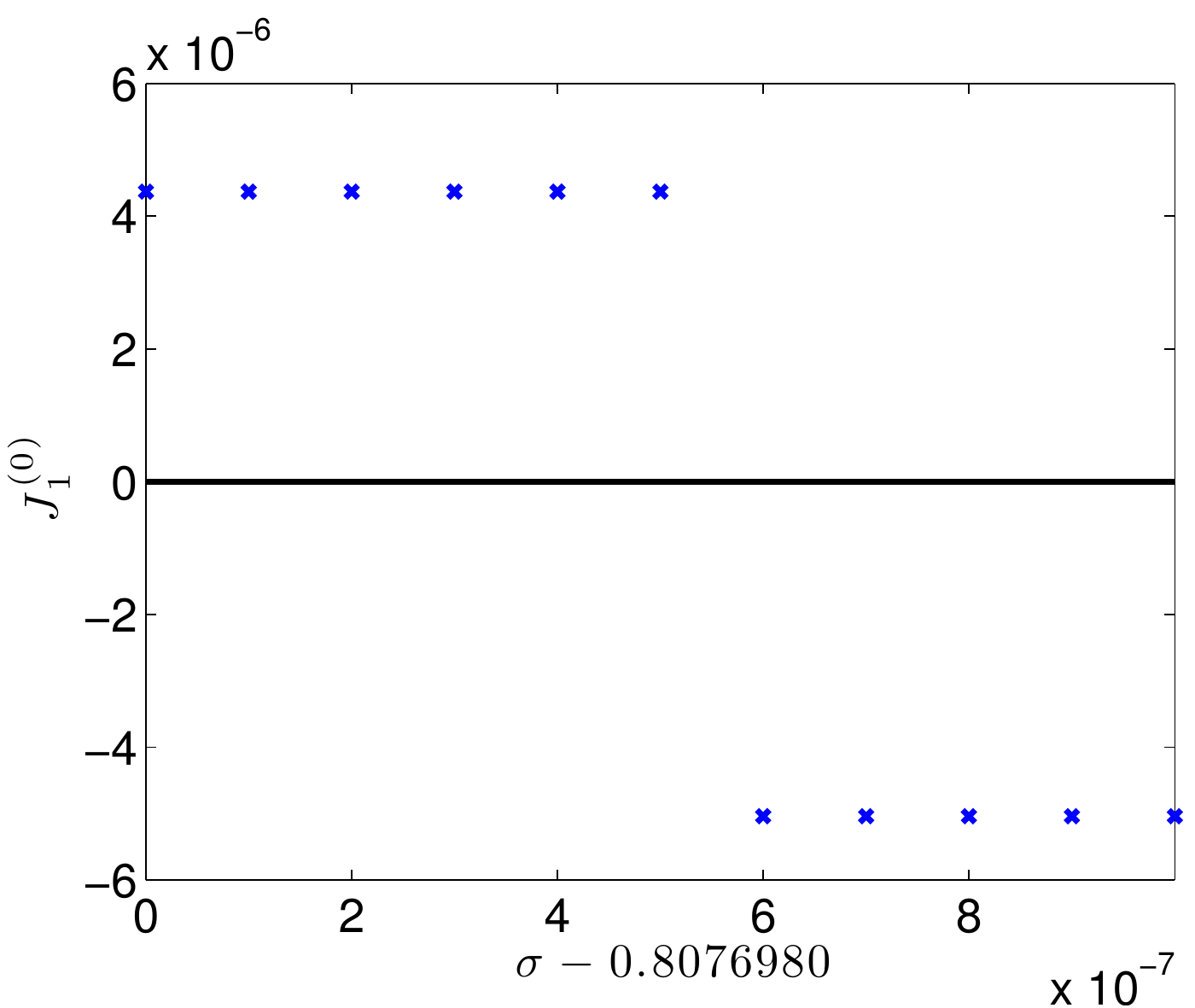}
  \caption{Zooming in on the root of $J_1^{(0)}$ inner product of the
    3D NLS equation, the solution is appears piecewise constant.  This
    is a closeup of Figure \ref{f:nls3d_j_0_var}(a). }
  \label{f:nls3d_j0_slow}
\end{figure}

\clearpage

\bibliographystyle{abbrv} \bibliography{spec_prop_paper}

\end{document}